\newtheorem{theorem}{Theorem}[section]
\newtheorem{proposition}[theorem]{Proposition}
\newtheorem{lemma}[theorem]{Lemma}
\newtheorem{corollary}[theorem]{Corollary}
\newtheorem*{blow up}{Blow-up axiom}
\newtheorem*{weak_blow up}{Weak blow-up axiom}
\theoremstyle{definition}
\newtheorem{definition}[theorem]{Definition}
\newtheorem{example}[theorem]{Example}
\newtheorem{remark}[theorem]{Remark}
\newtheorem*{assu}{Assumptions}
\def\Cross#1{\mathop{\hbox{\LARGE $\times$}}\limits_{\mbox{\scriptsize $#1$}}}
\def\Iso{{\ttO}_{\tt iso}}
\def\Deltaalg{{\tt \Delta}_{\it alg}}
\def\Alg#1{\hbox{$#1$-{\tt Alg}}} 
\def\Oper#1{\hbox{$#1$-{\tt Oper}}} 
\def\Groterm{{\sf 1}_{\int_\ttP \calO}}
\def\colorop #1(#2;#3){{#1}
   \left(\rule{0pt}{15pt}\right.
         \hskip -3mm \begin{array}{c}
	              #3\\#2
                     \end{array}
         \hskip -3mm \left. 
   \rule{0pt}{15pt} \right)
}
\def\hGr{{\frac12\Gr}}
\def\Dio{{\tt Dio}}
\def\Whe{{\tt Whe}}
\def\Or{{\rm Or}}
\def\Set{{\tt Set}}
\def\Sur{{\rm Surj}}
\def\Ordn{{{\tt Ord}_n}}
\def\frC{{\mathfrak C}}
\def\Ar{{\it Ar}}
\def\calP{{\mathcal P}}
\def\SFac{{\sf SFac}}
\def\UFB{{\sf UFib}}
\def\Rig{{\sf Rig}} 
\def\QBI{{\sf QBI}}
\def\Fac{{\sf Fac}} 
\def\WBU{{\sf WBU}} 
\def\SBU{{\sf BU}}
\def\SGrad{{\sf SGrad}}  
\def\sFSet{{\tt sFSet}}
\def\Fib{{\it Fib}}
\def\dash{{\hbox{\hskip .15em -\hskip .1em}}}
\def\Su{{\EuScript S}}
\def\OpCat{{\tt OpCat}}
\def\Ord{{\mathbb {\tt \Delta}_{\rm semi}}}
\def\oP{{\EuScript P}}
\def\calP{\oP}
\def\End{{\EuScript E}nd}
\def\ttV{{\tt V}}
\def\oC{\raisebox{.7em}{\rotatebox{180}{$\EuScript P$}}}
\def\ggGrc{{\tt ggGrc}}
\def\ttC{{\tt C}}
\def\RTr{{\tt RTr}}\def\PRTr{{\tt PRTr}}\def\PTr{{\tt PTr}}
\def\Im{{\it Im}}
\def\ttprGr{{\tt prGr}}
\def\im{isomorphism}
\def\Markl{{\EuScript M}}
\def\fib{\triangleright}
\def\FIB{\blacktriangleright}
\def\Term{{\tt Term}}
\def\term{{\sf 1}}
\def\bfk{{\mathbb k}}
\def\Surj{{\Fin_{\rm semi}}}
\def\Vect{{\tt Vect}}
\def\Per{{\tt Per}}
\def\ttP{{\tt P}}
\def\kompozice{\relax{}}
\def\FB{{\tt fib}}\def\OF{{\tt opfib}}\def\BF{{\tt bifib}}
\def\LT{\ttO_{\tt LTrm}}
\def\calO{{\EuScript O}}
\def\rada#1#2{{#1,\ldots,#2}}
\def\Rada#1#2#3{#1_{#2},\dots,#1_{#3}}
\def\epi{\twoheadrightarrow}
\def\ot{\otimes}
\def\id{1\!\!1}
\def\term{{\boldsymbol 1}}
\def\QO{{ \ttO_{\tt qb}}}
\def\ttO{{\tt O}}
\def\ttP{{\tt P}}
\def\Grc{\tt Grc}
\def\Tr{\tt Tr}
\def\bbN{{\mathbb N}}
\def\Leg{{\rm Leg}}
\def\Vert{{\rm Ver}}
\def\Flag{{\rm Flg}}
\def\Fin{{\tt Fin}}
\def\Vines{{\tt Vines}}
\def\Bq{{\tt Bq}} 
\def\Gr{{\tt Gr}}
\def\and{{\mbox { and }}}
\def\DO{{ \ttO_{\tt ord}}}
\def\DP{{ \ttP_{\tt ord}}}
\def\inv#1{{#1}^{-1}}
\def\martin#1\endmartin{\noindent{{\bf Martin:}\ {\color{red} #1} 
    \hfill\rule{10mm}{.75mm}} \break}
\title[Operadic categories as natural environment]{Operadic categories as a 
natural environment for Koszul duality}
\author{Michael Batanin}
\email{bataninmichael@gmail.com}
\author{Martin~Markl}
\email{markl@math.cas.cz}
\affiliation{Institute of Mathematics of the Czech Academy of Sciences, {\v Z}itn{\'a} 25,
         115 67 Prague 1, The Czech Republic\\ 
		 and
		 MFF UK, Sokolovsk\'a 83, 186 75 Prague 8, The Czech Republic}
\keywords{Operadic category, Grothendieck construction, 
operad, graph.}
\def\qb{quasi\-bijection}
\def\Qb{Quasibijection}
\begin{document}
\bibliographystyle{plain}

\begin{abstract}
This is the first paper of a series
which aims to set up
the cornerstones of Koszul duality for operads over operadic
categories.
To this end we single out additional
properties of operadic categories under which the theory of quadratic
operads and their Koszulity can be developed, parallel to the traditional one by
Ginzburg\textendash{}Kapranov. We then investigate how these
extra properties interact with discrete operadic (op)fibrations, which
we use as a powerful tool to construct new operadic categories from
 old ones. We pay particular attention to the operadic category
of graphs, giving a full description of this category (and its variants)
as an operadic category, and proving that it satisfies all the additional 
properties.

Our present work provides an answer to a question formulated in
Loday's last talk,
in 2012: ``What encodes types of
operads?''. In the second and third papers of our
series
we continue Loday's program by answering
his second question: ``How to construct Koszul duals to these
objects?'', and proving Koszulity of some of the most relevant operads.
\end{abstract}

\maketitle

\setcounter{secnumdepth}{3}
\setcounter{tocdepth}{1}

\tableofcontents

\section*{Introduction}

Operads are a powerful foundation for handling composition and
substitution of various structures. While at first the underlying
combinatorics of operads concerned how trees are composed and
contracted, mathematics and mathematical physics soon found the
need for composing also more general graphs, leading to more
complex notions.

The present work is the first one in a series of articles which 
lays down the basic stones of ``operadic calculus'' for our
general theory of ``operad-like'' structures. By them we mean, besides
the classical operads in the sense of Boardman{\textendash}Vogt \cite{boardman-vogt:73} and
 May~\cite{may:1972},
and their more recent variants such as cyclic, modular or wheeled
operads~\cite{getzler-kapranov:CPLNGT95,getzler-kapranov:CompM98,mms}, 
also diverse versions of PROPs such as
properads~\cite{vallette:TAMS07}, dioperads~\cite{gan},
\hbox{$\frac12$PROPs}~\cite{mv}, and still more exotic stuff as
permutads and pre-permutads~\cite{loday11:_permut} or protoperads~\cite{leray}. 
Also Batanin's $n$-operads \cite{batanin:conf,batanin:AM08} appear in our scope.
One may vaguely characterize operad- and 
PROP-like structures as those generalizing
compositions of multivalued functions. 

\bigskip

\noindent 
{\bf History.}
To our knowledge, the first attempt to systematize this kind of objects was
made by the second author in a 2008 paper~\cite{markl:handbook}. He considered
structures with operations modeled  by contractions along edges of
graphs (called ``pasting schemes'' in this context) 
of the type particular to a concrete
situation. These schemes were required to satisfy an important property
of {\em hereditarity\/}, which is a specific stability  under 
contractions of subgraphs. This property was later
redressed into categorical garment in the 
notion of a {\em Feynman  category\/}~\cite{kaufmann-ward:Fey}. 
Hereditarity however played an important r\^ole already
in~\cite{borisov-manin} and in  unpublished work of
Melli\`es\textendash{}Tabareau~\cite{MelliesTabareau-TAlgTheoriesKan}. 
Let us close this brief  historical account by
mentioning Getzler's work on regular patterns~\cite{getzler:operads-revisited} predating 
Feynman categories, see also the 
follow-ups~\cite{BKW,comprehensive}. Finally, in~\cite{BB} an approach to
general operad-like structures through the use of polynomial monads was developed.
We are not commenting in this work on the connections between 
these approaches and ours,  since this topic deserves a separate paper.  

\bigskip

\noindent 
{\bf The setup.}
Our approach is based on the notion of
an {\em operadic category\/}. The idea goes back to the first author's
work on higher category theory using a higher generalization of
non-symmetric  operads~\cite{batanin:en}. In this
formalism, a higher version of the \hbox{Eckmann\textendash Hilton} argument was described by
reformulating the classical notion of a (symmetric) operad and
Batanin's notion of an $n$-operad in such a~way that a comparison of
the two notions became possible \cite{batanin:conf,batanin:AM08}. The
fruitfulness of this idea was then confirmed in \cite{batanin:br}.

In our work on the duoidal Deligne conjecture we came to
understand that the same categorical scheme is very useful and,
indeed, necessary for the study of many other standard and
nonstandard operad-like structures.  Thus the concept of operadic
categories was introduced by the authors in~\cite{duodel}.

Operadic categories are essentially the most distilled algebraic
structures which contain all information determining operad-like
structures of a given type along with their algebras.
Morphisms in operadic categories possess {\em fibers\/} whose properties
are modeled by the preimages of
maps between finite sets. Unlike in
Barwick's operator categories  \cite{barwick}, the fibers need not
be pullbacks.  Each  operadic category $\ttO$ has its {\em operads\/}
and each $\ttO$-operad $P$ has its category of {\em $P$-algebras\/}.  

An archetypal operadic category is the skeletal category $\Fin$ of
finite sets.
Its operads are classical one-colored symmetric operads.  
As we demonstrate in this paper, various hereditary categories of graphs 
are operadic. Examples of different
scent are Batanin's  $n$-trees and $n$-ordinals, or the operadic category
supporting permutads.
For the reader's convenience we recall definitions of operadic
categories and related notions in 
the opening Section~\ref{a0}. 
The background scheme of our approach is the triad
\begin{eqnarray*}
&\boxed{\hbox {level 1: $\ttO^+$-operads}}&
\\
&\Downarrow&
\\
&\boxed{\hbox {level 0: $\ttO$-operads = ${\sf 1_{\ttO^+}}$-algebras}}&
\\
&\Downarrow&
\\
&\boxed{\hbox {level -1: algebras of $\ttO$-operads}}&
\end{eqnarray*}
in which ``$\Downarrow$'' means ``is governed by.'' At level~$0$ one sees 
operads over an operadic category~$\ttO$. 
We consider algebras for these operads as objects at level~$-1$. 
It turns out that $\ttO$-operads are algebras for the constant operad
 ${\sf 1}_{\ttO^+}$ over a certain operadic category $\ttO^+$
called the \hbox{{\em $+$-construction\/}} of $\ttO$, which we place at
level $1$. The triad can  be continued upwards to~infinity. The theory
of $+$-constructions will be developed in a future paper.

An example is the {\em classical triad\/} in which $\ttO$ is 
the operadic category $\Fin$ of finite sets. $\Fin$-operads  simultaneously  appear as algebras of the
constant operad ${\sf 1}_\RTr$ over the operadic 
category $\RTr$ of rooted trees, which is $\Fin^+$. At level~$-1$ we
find algebras for the classical operads. 

Strong inspiration for our setup was the seminal
paper by Getzler and Kapranov~\cite{getzler-kapranov:CompM98}, who  realized that
modular operads are algebras
over a certain (hyper)operad. They thus constructed 
levels $0$ and $-1$ of the triad for  the operadic
category $\ggGrc$ of connected genus-graded ordered graphs, 
cf.~Example \ref{Zaletame_si_jeste_do_konce_Safari?}.
It turns out that $\ggGrc^+$ at level $1$ is the  category of
graphs from $\ggGrc$ with a hierarchy of nested subgraphs. The
resulting scheme is the {\em Getzler{\textendash}Kapranov triad\/}. 

The novelty of our approach is that
we systematically put the structures we want to study
at level $-1$ so that they appear as algebras over a certain
operad.  For instance, cyclic operads in our setup are algebras
over the constant operad ${\sf 1}_{\Tr}$  over the operadic category $\Tr$
of trees, though they themselves are  {\em not\/} operads over any
operadic category.

\bigskip

\noindent {\bf Aims of the present and future work.} In this paper we focus on
categorical and combinatorial foundations of
Koszul duality for operads over operadic categories. In the follow-up
\cite{sydney2} we introduce the notion of quadraticity for
operads over operadic categories, and all other
ingredients of the duality theory for operads including the Koszul
property. We will then prove that operads describing the most common
structures are Koszul. 
This provides an answer to the two questions in
Loday's last talk~\cite{loday:talk} mentioned in the~Abstract.

Our series of papers is continued by~\cite{BMO} in which 
we construct explicit minimal models for the (hyper)operads governing
modular, cyclic and ordinary operads, and wheeled properads. 
The final paper of this series will be devoted to
the $+$-construction in the context of operadic categories.

\bigskip

\noindent 
{\bf The plan.}
In Section~\ref{a0} we recall operadic categories and related notions,
using almost verbatim the material of~\cite{duodel}. 
In Section~\ref{Pojedu_vecer_nebo_ted?} we single out some finer
additional properties of operadic categories which will ensure in our second paper \cite{sydney2}  that free
operads over these categories are of a~particularly nice form. 
Section~\ref{Ceka_mne_Psenicka.} is devoted to our construction of an important 
operadic category of graphs and we show that it  satisfies all these extra 
requirements.  We will also see that several subtle properties of
graphs may be conveniently expressed in the language of our~theory.
In Section~\ref{Minulou_sobotu_jsem_odletal_vlekarskou_osnovu.} we
recall from~\cite{duodel}
discrete (op)fibrations and the related Grothendieck construction, 
and use it as a tool for producing new operadic categories from
old~ones. 

Free operads over operadic categories will play an important r\^ole
both in the definition of
quadraticity and of the dual dg operad needed for
the formulation of the Koszul property in the follow-up
\cite{sydney2}. 
As we noticed
for classical operads in~\cite{markl:zebrulka}, the construction of
free operads is more structured if
one uses, instead of the standard definition, a modified one.
Let us explain what we mean by this.

Traditional operads in the spirit of May~\cite{may:1972} are 
collections $\{\oP(n)\}_{n\geq 1}$  of
$\Sigma_n$-modules with composition laws
\begin{subequations}
\begin{equation}
\label{po 6ti dnech}
\gamma : \oP(k) \ot \oP(n_1) \ot \cdots \ot \oP(n_k) \to \oP(n_1 + \cdots +
n_k),\
k, \Rada n1k \geq 1,
\end{equation} 
satisfying appropriate
associativity and equivariance axioms; notice that we implicitly 
assume that $\oP(0)$ is empty.
However, in~{\cite[Definition~1.1]{markl:zebrulka}} we suggested
a definition based on binary composition laws
\begin{equation}
\label{letel_jsem_ve_snehovych_prehankach}
\circ_i : \oP(m) \ot \oP(n) \to \oP(m+n-1), \ n \geq 1,\ 1 \leq i \leq m.
\end{equation}
\end{subequations}
It turned out that under some
quite standard assumptions, for instance in the presence of units,
augmentations or connectivity, 
the two notions are equivalent, see
e.g.~\cite[Observation~1.2]{markl:zebrulka} 
or~\cite[Proposition~13]{markl:handbook}, though there are
structures possessing composition laws~(\ref{po 6ti dnech})
only~\cite[Example~19]{markl:handbook}. Operad-like
structures based on ``partial compositions''
in~(\ref{letel_jsem_ve_snehovych_prehankach}) were later called
Markl operads.  

Operads over general operadic categories also exist in two
disguises which are, under favorable conditions, equivalent -- 
in the form where the compositions in all inputs are
made simultaneously; this is how they were introduced
in~\cite{duodel} -- and in Markl form whose
composition laws are binary.
The crucial advantage of the latter is, as in the classical case,
that free Markl operads are naturally graded by the length of
the chain of compositions. The theory of Markl operads and its
relation to the original formulation of operad theory over operadic
categories given in~\cite[Section~1]{duodel} together with the
necessary background material occupies 
Sections~\ref{Velikonoce s Jarkou na chalupe.}
and~\ref{section-markl}.
To help the reader navigate  the paper, we include an index
of terminology and notation. 

\subsection*{Acknowledgments}

The first author acknowledges the financial
support of the Australian Research Council (grant
No.~DP130101172).
The second author was supported by 
grant GA \v CR 18-07776S, Praemium Academiae and RVO: 67985840.
Both authors acknowledge the hospitality of the Max Planck
  Institute for Mathematics in Bonn where this work was initiated.
  We express our gratitude to Joachim Kock and the referees for 
useful suggestions and comments that 
led to substantial improvement of our paper. 

\section{Operadic categories and their operads}
\label{a0}

In this preliminary
section we recall, for the convenience of the reader, some basic
definitions from \cite{duodel}. The reader may also wish to look at
Lack's paper \cite{lack} for a characterization of operadic categories
in the context of skew monoidal categories, or at~\cite{GKW} by
Garner, Kock and Weber for yet another point of view.  For brevity we
use the terms {\it operadic category} and {\it operadic functor}
for what have been defined as a {\it strict operadic
  category} and a {\it strict operadic functor} in \cite{duodel}. 
More general equivalence-invariant operadic categories will be the subject
of  upcoming work of Batanin, Kock and Weber.

Let $\Fin$ be the skeletal category of finite
sets (denoted in~\cite{duodel} by $\sFSet$).  The objects of
this category are the linearly ordered sets $\bar{n} =\{1,\ldots,n\} ,
n\in \bbN.$ Morphisms are arbitrary maps between these sets.  We
define the $i$th fiber $f^{-1}(i)$ of a morphism $f: T \to S$,
$i\in S$, as the pullback of $f$ along the map $\bar{1}\to S$ which picks up the element $i$, so this is the object $f^{-1}(i)=\bar{n}_i \in \Fin$ which is isomorphic as
a linearly ordered set to the preimage $\big\{j\in T \ | \ f(j) =i\big\}$.
Any commutative triangle 
 in $\Fin$
  \[
    \xymatrix@C = +1em@R = +1em{
      T      \ar[rr]^f \ar[dr]_h & & S \ar[dl]^g
      \\
      &R&
    }
\]
then induces a map $f_i:h^{-1}(i)\to g^{-1}(i)$ for any $i\in R$.  This
assignment is a functor $\Fib_i:\Fin/R\to \Fin$. Moreover, for
any $j\in S$ we have the equality $f^{-1}(j) =
f^{-1}_{g(j)}(j).$ The above structure on the category $\Fin$ motivates the
following abstract definition.

Recall that an object $t$ in a category $\ttO$ is a {\it local terminal object} if it is a terminal object in its connected component.
An {\em  operadic category\/} is a 
category $\ttO$  equipped with a {\em cardinality
functor}  $|\dash|:\ttO\to \Fin$ having the following properties. We
require that each connected component of $\ttO$ has a chosen local terminal object
$U_c$, $c\in \pi_0(\ttO)$.  We also assume that for every $f:T\to S$
in $\ttO$ and every element $i\in |S|$, there is given an object
$f^{-1}(i) \in \ttO,$ which we will call {\it the $i$-th fiber\/} of~$f$,
such that $|f^{-1}(i)| = |f|^{-1}(i).$ We also require that 
\begin{itemize}  \item[(i)] For any $c\in \pi_0(\ttO)$, $|U_c| = 1$.

\item[(ii)]
For any $T \in \ttO$ and each $i\in |T|$, the fiber $\id_T^{-1}(i)$ of
the identity  $\id_T:T\to T$ is a~chosen local terminal object.
\item[(iii)] For any commutative diagram in $\ttO$
  \begin{equation}
    \label{Kure}
    \xymatrix@C = +1em@R = +1em{
      T      \ar[rr]^f \ar[dr]_h & & S \ar[dl]^g
      \\
      &R&
    }
\end{equation}
  and every $i\in |R|$, one is given a map
  \[
  f_i: h^{-1}(i)\to g^{-1}(i)
  \]
  such that $|f_i|: |h^{-1}(i)|\to |g^{-1}(i)|$ is the map
  $|h|^{-1}(i)\to |g|^{-1}(i)$ of sets induced by
  \[
  \xymatrix@C = +1em@R = +1em{ |T| \ar[rr]^{|f|} \ar[dr]_{|h|} & & |S|
    \ar[dl]^{|g|}
    \\
    &\phantom{.} |R| \,.& }
  \]
  We moreover require that this assignment forms a functor ${\Fib}_i:
  \ttO/R \to \ttO$  called {\em fiber functor}.   Moreover, if
  $R=U_c$, the functor ${\Fib}_1$ must be the domain functor
  $\ttO/R \to \ttO$. The last condition says
that  the unique fiber of the canonical morphism $!_T: T\to
  U_c$ is $T$.
  \item[(iv)] 
In the situation of Axiom (iii), for any $j\in |S|$,  one has the equality
\begin{equation}
\label{karneval_u_retardacku}
  f^{-1}(j) = f_{|g|(j)}^{-1}(j).
\end{equation}

\item[(v)] 
Let 
\[
\xymatrix@C = +2.5em@R = +1em{ & S \ar[dd]^(.3){g} \ar[dr]^a & 
\\
T  \ar[ur]^f    \ar@{-}[r]^(.7){b}\ar[dr]_h & \ar[r] & Q \ar[dl]^c
\\
&R&
}
\]
be a commutative diagram in $\ttO$ and let $j\in |Q|, i = |c|(j).$
Then by Axiom (iii) the diagram
\[
    \xymatrix@C = +1em@R = +1em{
      h^{-1}(i)      \ar[rr]^{f_i} \ar[dr]_{b_i} & & g^{-1}(i) \ar[dl]^{a_i}
      \\
      &c^{-1}(i)&
    }
\]
commutes, so it induces a morphism $(f_i)_j: b_i^{-1}(j)\to a_i^{-1}(j).$ By Axiom (iv) we have $$a^{-1}(j)=a_i^{-1}(j) \ \mbox{and} \ b^{-1}(j)=b_i^{-1}(j).$$
We  then require the equality
$f_j = (f_i)_j$.
\end{itemize}

We will also assume that the set $\pi_0(\ttO)$ of connected components
is {\em small\/} with respect to  a sufficiently big ambient universe.

An {\it  operadic functor\/} between  operadic categories is a
functor $F: \ttO \to \ttP$ over $\Fin$ which preserves fibers in
the sense that $F\big(f^{-1}(i)\big) = F(f)^{-1}(i)$, for any $f : T
\to S \in \ttO$ and $i \in |S| = |F(S)|$.  We also require that $F$
preserves the chosen local terminal objects, and
that $F(f_i) = F(f)_i$ for $f$ as in~(\ref{Kure}).  This gives the
category ${\OpCat}$ of  operadic categories and their operadic
functors.

Let $(\ttV,\otimes,\bfk)$ be a (closed) symmetric monoidal
category. Thanks to MacLane's coherence theorem  
we will also assume that associativity and unit 
constrains in $\ttV$  are identities. 
For a~family  $E = \{E(T)\}_{T
  \in \ttO}$ of objects of $\ttV$ and a morphism $f:T\to S$ 
let 
\[
E(f) = \bigotimes_{i \in |S|} E({T_i}),\ T_i := \inv f(i).
\]
In the following definition  we tacitly use
equalities~(\ref{karneval_u_retardacku}).

\begin{definition} 
\label{Jarca_u_mne_prespala!}
An \emph{operad over $\ttO$} (or simply an \emph{$\ttO$-operad})  in $\ttV$ is a family  $\calP = \{\calP(T)\}_{T
\in \ttO}$ of objects of $\ttV$ together with units
  \[
  I\to \calP(U_c),\ c \in \pi_0(\ttO),
  \]
and composition laws
  \[
  \gamma_f: \calP(f) \otimes \calP(S)\to \calP(T),\ f:T\to S,
  \]
satisfying the following axioms.

  \begin{itemize}
  \item[(i)] Let $T \stackrel f\to S \stackrel g\to R$ be morphisms in
    $\ttO$ and $h := gf : T \to R$ as in~(\ref{Kure}).  Then the
    following diagram of composition  laws of $\calP$ combined with the
    canonical isomorphisms of products in $\ttV$ commutes:
    \[
    \xymatrix@C = 2em@R = .4em{
\ar@/^2.5ex/[rrd]^(.56){\hskip .5em\bigotimes_{i}\gamma_{f_i} \ot \id}
      \ar[dd]_(.45){\id \ot \gamma_g}
\displaystyle\bigotimes_{i
        \in |R|} 
      \calP(f_i) \ot \calP(g) \ot \calP(R) & &    
      \\  & &  \ar@/^/[dl]_{\gamma_h}
     \calP(h) \ot \calP(R)\ .
\\
      \ar[r]^(.77){\gamma_f}{\rule{0pt}{2em}}  
\displaystyle\bigotimes_{i \in |R|}
      \calP(f_i) \ot \calP(S) \cong   \calP(f) \ot \calP(S)&
 \calP(T)&
    }
    \]
  \item[(ii)] The composite
    \[
    \xymatrix@1@C = +2em{ \calP(T) \ar[r]& \rule{0pt}{2em} \displaystyle
      \bigotimes_{i\in |T|} \bfk\! \ot\! \calP(T) \ar[r]&\rule{0pt}{2em}
      \displaystyle \bigotimes_{i\in |T|} \calP(U_{c_i})\! \ot\! \calP(T)\ar[r]^(.56)= &
      \calP(\id_T)\! \ot\! \calP(T)
 \ar[r]^(.65){\gamma_{\id}}&\calP(T) }
    \]
    is the identity for each $T \in \ttO$. 
  \item[(iii)] The composite 
    \[
    \xymatrix@1@C = +2.2em{ \calP(T)\! \ot \! \bfk \ \ar[r]&\ \calP(T)\! \ot\!
      \calP(U_c) \ \ar[r]^=
&\ \calP(!_T)\! \ot\! \calP(U_c)\
      \ar[r]^{\hskip 1.8em \gamma_{!_T}}&\ \calP(T)}
    \]
   is the identity for each $T \in \ttO,$    where  $!_T: T\to
  U_c$ is the unique morphism.
    
  \end{itemize}
\end{definition}

A {\em morphism\/} $\calP' \to \calP''$  
of $\ttO$-operads in $\ttV$ is a
collection of $\ttV$-morphisms  
\hbox{$\calP'(T) \to \calP''(T)$}, $T \in \ttO$, commuting with the
composition laws and units.  
We denote by $\Oper\ttO(\ttV)$ (or simply by $\Oper\ttO$ if $\ttV$ is
understood) the category of
$\ttO$-operads in $\ttV$.  Each operadic functor $F:\ttO\to \ttP$ 
induces the restriction $F^*:  \Oper\ttP\to \Oper\ttO$.

\begin{example} 
A primary example of an operadic category is the category
  $\Fin$, while  the cardinality functor \hbox{$|\!-\!|:\ttO\to\Fin$} 
is an example
  of an operadic functor. Thus $\Fin$ is the terminal object in
  the category of operadic categories and operadic
  functors. The category of $\Fin$-operads is isomorphic to the
  category of classical one-colored (symmetric)~operads.
  \end{example} 
  \begin{example} The subcategory  $\Surj \subset \Fin$ of nonempty finite sets and
surjections is also an operadic category. Operads over $\Surj$ are
classical one-colored symmetric operads without nullary operations. 
We will call such operads \emph{constant-free}. 
\end{example}

\begin{example}
\label{Dnes_na_muslicky.} 
  The category of vines $\Vines$ \cite{lavers,weber} is another
  example of an operadic category. It has the same objects as $\Fin$
  but a morphism $\bar{n}\to \bar{m}$ is an isotopy class of merging
  descending strings in $\mathbb{R}^3$ (called {\em vines\/}) like in the
  following picture:
\begin{center}
{
\begin{pspicture}(5,-1.504111)(-0,1.7)
\psdots[dotsize=0.2](0.1,1.39)
\psdots[dotsize=0.2](1.88,1.39)
\psdots[dotsize=0.2](3.52,1.39)
\psdots[dotsize=0.2](1.28,-1.35)
\psdots[dotsize=0.2](3.74,-1.35)
\psdots[dotsize=0.2](4.9,1.39)
\psbezier[linewidth=0.04](0.108106375,1.3233984)(0.108106375,0.5233984)(0.8481064,1.1633984)(1.3281064,0.3833984)(1.8081064,-0.3966016)(1.9081063,-0.43660158)(3.0281065,-0.63660157)
\psbezier[linewidth=0.04](1.8881063,1.3433985)(1.9281064,0.5633984)(1.8481064,0.8433984)(1.8281064,0.4633984)(1.8081064,0.08339841)(2.0081065,-0.3566016)(3.0481064,-0.63660157)
\psbezier[linewidth=0.04](4.8881063,1.3433985)(4.9281063,0.5633984)(4.3281064,-0.116601594)(3.9281063,-0.4566016)(3.5281065,-0.7966016)(2.1081064,-0.6766016)(1.2681063,-1.3766016)
\psbezier[linewidth=0.04](3.5281065,1.3433985)(3.5681064,0.5633984)(3.1481063,0.5833984)(3.2281063,0.24339841)(3.3081064,-0.09660159)(3.7281063,-0.3566016)(3.7481065,-0.3966016)
\psbezier[linewidth=0.04](3.9681063,-0.57660156)(4.1881065,-0.69660157)(4.1681066,-0.8966016)(3.9281063,-1.0966016)(3.6881063,-1.2966015)(3.7281063,-1.3346504)(3.7411063,-1.4166015)
\pscustom[linewidth=0.106000006,linecolor=white]
{
\newpath
\moveto(3.4081063,-0.6766016)
\lineto(3.3414402,-0.6766016)
\curveto(3.3081064,-0.6766016)(3.2481065,-0.6816016)(3.22144,-0.6866016)
\curveto(3.1947732,-0.6916016)(3.1414394,-0.70160156)(3.1147726,-0.7066016)
\curveto(3.0881064,-0.7116016)(3.0347726,-0.7216016)(3.0081065,-0.7266016)
}
\psbezier[linewidth=0.04](3.0481064,-0.63660157)(3.4481063,-0.7366016)(3.6281064,-0.8766016)(3.6681063,-0.9966016)(3.7081063,-1.1166016)(3.6881063,-1.0166016)(3.7281063,-1.3166016)
\end{pspicture} 
}
\end{center}

There is a canonical identity-on-object functor $|-|:\Vines\to\Fin$
which sends a vine to the function assigning to the top endpoint
of a string its bottom endpoint.  A fiber of a vine
$v:\bar{n}\to\bar{m}$ is equal to the fiber of
$|v|:\bar{n}\to\bar{m}.$ The rest of the operadic category structure
on $\Vines$ is quite obvious.  The category of $\Vines$-operads is
isomorphic to the category of braided operads \cite{Fiedorowicz}. This
fact can be easily proved using the equivalent definition of braided
operad given in \cite{batanin:br}.

In fact, using Weber's theory \cite{weber} one can associate an
operadic category $\ttO(G)$ to each group operad $G$ (see \cite{yoshida}
for the definition) such that $\ttO(G)$-operads are
exactly $G$-operads. The operadic categories $\Fin$ and $\Vines$ are
special cases $\ttO(\Sigma)$ and  
$\ttO({\it  Braid})$ of this construction for the symmetric group and 
braid group operads, respectively. 
We will provide the details elsewhere. 
\end{example}

\begin{example}
  \label{puget}
  Let $\frC$ be a set.  Recall from \cite[Example~1.7]{duodel} (see also
  \cite[Example~10.2]{lack}) that a {\em $\frC$-bouquet\/} is a map $b:
  X\!+\!1\to \frC,$ where $X\in \Fin.$ In other words,
  a~$\frC$-bouquet is an ordered $(k+1)$-tuple $(i_1,\ldots,i_k;i),\ X
  = \bar{k}$, of elements of $\frC$. It can also be thought of as a~planar corolla all of whose  edges (including the root) are colored.
  The extra color $b(1) \in \frC$ is called the {\em root color\/}. The finite set
  $X$ is the {\em underlying set\/} of the bouquet~$b$.

  A map of $\frC$-bouquets $b \to c$ whose root colors coincide is an
  arbitrary map $f: X\to Y$ of their underlying sets. There are no maps between $\frC$-bouquets with different root colors. We denote the resulting category of
  $\frC$-bouquets by $\Bq(\frC)$.

  The cardinality functor $|\dash|:\Bq(\frC)\to \Fin$ 
assigns to a bouquet $b :X+1\to \frC$ its underlying
  set~$X$.  The fiber of a map $b \to c$ given by $f : X \to Y$ over
  an element $y\in Y$ is the $\frC$-bouquet whose underlying set is
  $f^{-1}(y),$ the root color coincides with the color of $y$ and the
  colors of the elements are inherited from the colors of the elements
  of $X$.

 Operads over the category $\Bq(\frC)$ of $\frC$-bouquets 
are ordinary $\frC$-colored operads. Therefore, for
each $\frC$-colored collection $E = \{E_c\}_{c\in \frC}$ of objects of
$\ttV$ one has the {\em endomorphism $\Bq(\frC)$-operad\/}
$\End^{\Bq(\frC)}_E$, namely the ordinary colored
endomorphism operad~\cite[\S1.2]{berger-moerdijk:CM07}.
\end{example}

\begin{example}
\label{finset+deltaalg}
The category $\Deltaalg$ of
finite ordinals (including the empty one) has an obvious structure of
an operadic category.  Operads over $\Deltaalg$ are ordinary nonsymmetric
operads~\hbox{\cite[Prop.~3.1]{batanin:AM08}}.
\end{example}

\begin{example}
\label{Musim na tu zatracenou chalupu.}
The cartesian product in the category of operadic categories exists
and is given by a pullback over $\Fin$ using cardinality functors. In
particular, for any operadic category 
$\ttO$ and any set of coulours $\frC$, the  \hbox{$\Bq(\frC)\!\times\!
    \ttO$}\,-operads are $\frC$-colored $\ttO$-operads \hbox{\cite[page~1637]{duodel}}.
Likewise, the product \hbox {$\Vines\! \times\! \ttO$} with the operadic category
of vines of Example~\ref{Dnes_na_muslicky.} describes braided versions
of $\ttO$-operads. The product  \hbox{$\Deltaalg\! \times\! \ttO$} is
isomorphic to
the subcategory $\DO\subset \ttO$ of morphisms for which $|f|$ is
order preserving.\label{Jel jsem na Martinove horskem kole.}
\end{example}

\begin{example}
Another important example is the operadic category $\Ordn$ of
$n$-ordinals, $n \in \bbN$, see~\cite[Sec.~II]{batanin:conf}. $\Ordn$-operads
are Batanin's pruned $n$-operads which are allowed to take values not only in
ordinary symmetric monoidal categories, but in more general {\em  globular\/}
monoidal $n$-categories. 
Although  $\Ordn$ does not fulfill the
additional properties required for some constructions in this work,
it was a crucial motivating example for our definition of operadic categories.
\end{example}

For each operadic category $\ttO$ with $\pi_0(\ttO)=\frC$, there is a
canonical operadic ``arity'' functor 
\begin{equation}
\label{Jarca}
\Ar: \ttO\to \Bq(\frC)
\end{equation}
giving
  rise to the factorization
  \begin{equation}
    \label{eq:factor}
\xymatrix@R=1em{&\ttO \ar@/_/[ld]_{\Ar} \ar@/^/[rd]^{|\dash|}   & 
\\
 \Bq(\frC) \ar[rr]^{|\dash|} &&\Fin
}
\end{equation}
of the cardinality functor $|\dash| : \ttO \to \Fin$. 
It is constructed as follows.

Recall
that the {\em $i$th source\/} $s_i(T)$ of an object $T \in
\ttO$ is the
$i$th fiber of the identity automorphism of $T$, i.e.\ 
$s_i(T) :=\id_T^{-1}(i)$ for $i \in |T|$. We denote by $s(T)$ the set of all sources of
$T$. For an object $T \in \ttO$ we denote by $\pi_0(T) \in
\pi_0(\ttO)$ the connected component to which $T$ belongs. Similarly,
for a subset $X$ of objects of $\ttO$, 
\[
\pi_0(X) := \{\pi_0(T) \ | \    T \in X\} \subset \pi_0(\ttO).
\]

  
  The bouquet $\Ar(T) \in \Bq(\frC)$ is
  defined as
 $b: s(T)+1 \to \frC$, where $b$ associates to each fiber $U
  \in s(T)$ 
the corresponding connected component $\pi_0(U)\in \frC$, and $b(1)
  := \pi_0(T)$.  The assignment
  $T \mapsto \Ar(T)$\/ extends to an operadic functor.

\begin{example}
\label{Zitra_do_Osnabrucku}
  For a $\frC$-colored collection $E = \{E_c\}_{c\in \frC}$ in $\ttV$ and an
  operadic category $\ttO$ with $\pi_0(\ttO) = \frC$, one defines the {\em
    endomorphism $\ttO$-operad\/} $\End^\ttO_E$ as the restriction
  \[
    \End^\ttO_E := \Ar^*\big(\End^{\Bq(\frC)}_E\big)
  \]
  of the $\Bq(\frC)$-endomorphism operad of Example~\ref{puget} along the
  arity functor $\Ar$~of~(\ref{Jarca}).
\end{example}

The following definition was given in \cite[Definition~1.20]{duodel}. 
\begin{definition}
\label{Jaruska_ma_chripecku}
An {\em algebra\/} over an $\ttO$-operad $\calP$ in $\ttV$ is a
collection $A = \{A_c\}_{c\in \pi_0(\ttO)}$, $A_c \in \ttV$, equipped with an
$\ttO$-operad map $\alpha : \calP \to \End^\ttO_A$.  
\end{definition}

An algebra structure is thus
provided by suitable structure maps
\[
\alpha_T : \calP(T)\ot \bigotimes_{c \in \pi_0(s(T))} A_c  \to
A_{\pi_0(T)}, \ T \in \ttO,
\]
We denote by
$\Alg\calP(\ttV)$ (or simply by $\Alg\calP$ when $\ttV$ is clear from the
context) the category of $\calP$-algebras and their morphisms.

\section{Sundry facts about operadic categories}
\label{Pojedu_vecer_nebo_ted?}

The aim of this section is to study some finer properties of operadic
categories and formulate some additional axioms and their consequences
required for our future constructions concerning the category of
graphs  and Koszul duality theory.

\bigskip

\noindent
{\bf Conventions.} 
Chosen local terminal objects of an operadic category $\ttO$ will be denoted by
$U$ with various decorations such as $U',U''$,~etc. 
The notation $U_c$ will mean the chosen local terminal object of a component 
$c \in \pi_0(\ttO)$.
We 
will sometimes call these chosen local terminal objects the {\em trivial\/}
ones.
\Qb{s} will be indicated by $\sim$, \im{s} by $\cong$; a preferred
notation for both of them will be something resembling permutations, like
$\sigma$, $\omega$, $\pi$,~etc.

A {\em quasibijection} is a 
 morphism $f:T\to S$ in $\ttO$ such that, for each $i\in
 |S|$,  we have
$f^{-1}(i) = U_{d_i}$ for some $d_i\in 
\pi_0(\ttO)$. 
To avoid any possible confusion, we assert that this definition 
implies that a~map between objects with the empty set of fibers is
also a \qb; {in~\cite{duodel} such morphisms were called
  {\em trivial\/}.}
Note that an isomorphism is not necessary a 
quasibijection and that a quasibijection is not necessary an isomorphism.

We will denote by $\QO\subset \ttO$ the subcategory of \qb{s}, and by
$\DO\subset \ttO$ the subcategory of morphisms for which $|f|$ is
order preserving. Notice that $\DO$ unlike $\QO$ has a~natural
structure of an operadic category, cf.~Example~\ref{Musim na tu
  zatracenou chalupu.}.\label{9 dni}

The following
Lemma \ref{l1}(iii) and Corollary \ref{invertingqb}(i) were also proved 
by Lack as~\cite[Lemma 8.2]{lack}.

\begin{lemma}
\label{l1}
Consider the commutative diagram in an operadic category
\[
\xymatrix@C=4em{S \ar[rd]^{f''} \ar[d]_{f'}
\\
T'  \ar[r]^(0.4)\sigma & T''\,.
}
\]
Let  $j\in |T''|$ and $|\sigma|^{-1}(j) = \{i\}$ for some 
$i\in |T'|$.
Then 
\begin{enumerate} 
\item[(i)] 
The unique fiber of the induced map \hbox{$f'_j : \inv{f''}(j) \to
\inv{\sigma}(j)$} equals $\inv{f'}(i)$. 
\item[(ii)]  If $\inv{\sigma}(j)$ is
trivial, in particular  if $\sigma$ is a \qb,~then 
\begin{equation*}
\inv{f'}(i) = \inv{f''}(j).\end{equation*}
\item[(iii)]  If both $\sigma$ and $f''$ are quasibijections then $f'$ is a quasibijection. 
\end{enumerate}
\end{lemma}

\begin{proof}
By Axiom~(iv) of an operadic category,
$\inv{f'}(i) = \inv{f'_j}(i)$ which readily gives the
first part of the lemma. 
If  $\inv{\sigma}(j)$ is trivial, then the fiber of $f'_j$ equals $
\inv{f''}(j)$ by Axiom~(iii). This proves the second and third part of the lemma.
\end{proof}

\begin{lemma}
\label{l2}
Consider the commutative diagram in an operadic category
\[
\xymatrix{S' \ar[rr]^\pi_\sim \ar[dr]_{f'} && S'' \ar[ld]^{f''}
\\
&T&
}
\]
where $\pi$ is a \qb. Then all $\pi_i: \inv{f'}(i) \to
\inv{f''}(i)$, $i \in |T|$, are \qb{s} too.
\end{lemma}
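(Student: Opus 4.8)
The plan is to express each fiber of the induced map $\pi_i$ directly as a fiber of $\pi$ itself, and then to read off triviality from the hypothesis that $\pi$ is a \qb. The maps $\pi_i : \inv{f'}(i) \to \inv{f''}(i)$ are exactly those supplied by Axiom~(iii) applied to the given triangle (taking $R = T$, $h = f'$, $g = f''$, and the top map $\pi$ in the r\^ole of $f$), so their existence is not in question; what must be shown is that every fiber $\inv{(\pi_i)}(j)$ is a trivial object.

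The single substantive step is an application of Axiom~(iv) to the composite $S' \stackrel{\pi}{\longrightarrow} S'' \stackrel{f''}{\longrightarrow} T$, whose diagonal is $f'$. Fixing $i \in |T|$, a fiber of $\pi_i$ is $\inv{(\pi_i)}(j)$ for $j \in |\inv{f''}(i)|$; under the built-in identification $|\inv{f''}(i)| = |f''|^{-1}(i)$, such a $j$ is an element of $|S''|$ with $|f''|(j) = i$. Axiom~(iv) then yields
\[
\inv{\pi}(j) \ = \ \inv{\big(\pi_{|f''|(j)}\big)}(j) \ = \ \inv{(\pi_i)}(j).
\]
Because $\pi$ is a \qb, the left-hand side $\inv{\pi}(j)$ is a trivial object $U_c$, and hence so is $\inv{(\pi_i)}(j)$.

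Letting $i$ range over $|T|$ and $j$ over the corresponding $|\inv{f''}(i)|$ exhausts all fibers of all the maps $\pi_i$, so each $\pi_i$ has only trivial fibers, i.e.\ each $\pi_i$ is a \qb, as required. I expect the only point needing care to be the bookkeeping of the index $j$, which must be read simultaneously as an element of $|S''|$ (to invoke Axiom~(iv) on the composite) and as an element of $|\inv{f''}(i)|$ (to speak of a fiber of $\pi_i$); these two readings coincide precisely through the cardinality-of-fiber compatibility $|\inv{f''}(i)| = |f''|^{-1}(i)$. In particular, neither the more intricate Axiom~(v) nor Lemma~\ref{l1} appears to be needed here: the claim is essentially the associativity-of-fibers Axiom~(iv) once the indices are aligned.
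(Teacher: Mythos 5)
Your proof is correct and is exactly the paper's argument spelled out: the paper's proof of Lemma~\ref{l2} reads, in full, ``Immediate from Axiom~(iv),'' and your identification $\inv{\pi}(j) = \inv{(\pi_i)}(j)$ for $i = |f''|(j)$ is precisely the intended instance of that axiom. Your care with reading $j$ both in $|S''|$ and in $|\inv{f''}(i)|$ matches the paper's own convention, since Axiom~(iv) is stated with $j \in |S|$ under the identification $|\inv{g}(i)| = |g|^{-1}(i)$.
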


\begin{proof} Immediate from Axiom~(iv).
\end{proof}

\begin{corollary}\label{invertingqb}
In any operadic category $\ttO$,
\begin{enumerate}
\item[(i)] quasibijections are closed under composition. 
\item[(ii)] 
If a quasibijection is an isomorphism, then its inverse is also a
quasibijection. 
\end{enumerate}
\end{corollary}

\begin{proof} 
The first statement follows from Lemma \ref{l2} when $f''$ is a quasibijection. Indeed, in this case we have a quasibijection $!=\pi_i: \inv{f'}(i) \to
U_{c_i}$, for each $i \in |T|,$ but the fiber of such a morphism must be equal to $\inv{f'}(i).$ 

The second statement follows readily from part (iii) of Lemma
\ref{l1}.  %
\end{proof}

\begin{lemma}
\label{l3}
Consider the commutative diagram in an operadic category
\begin{equation}
\label{s1}
\xymatrix@C=4em{S' \ar[d]_{f'}\ar[dr]^{f} \ar[r]^\pi  & S'' \ar[d]^{f''}
\\
T' \ar[r]^\sigma &\ T''\,.
}
\end{equation}
Let  $j\in |T''|$ and $|\sigma|^{-1}(j) = \{i\}$ for some 
$i\in |T'|$. Diagram~(\ref{s1}) determines:
\begin{itemize}
\item[(i)]
the map $f'_j:  \inv{f}(j) \to
\inv{\sigma}(j)$ whose unique fiber equals  $\inv{f'}(i)$, and
\item[(ii)]
{the induced map  $\pi_j :  
\inv{f}(j) \to
\inv{f''}(j)$.}
\end{itemize}
If $\inv{\sigma}(j)$ is
trivial, in particular if $\sigma$ is a \qb,~then
$\pi$ induces a map
\begin{equation}
\label{zitra_na_prohlidku_k_doktoru_Reichovi}
\pi_{(i,j)} : \inv{f'}(i) \to  \inv{f''}(j)
\end{equation}
which is a \qb\ if $\pi$ is.
\end{lemma}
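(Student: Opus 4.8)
The plan is to apply the operadic axioms together with Lemmas~\ref{l1} and~\ref{l2} to the two commutative triangles sitting inside diagram~(\ref{s1}), both of which share the diagonal composite $f = f''\pi = \sigma f'$. First I would treat parts (i) and (ii) as mere bookkeeping. The lower triangle $S' \xrightarrow{f'} T' \xrightarrow{\sigma} T''$ with composite $f$ is exactly the configuration of Lemma~\ref{l1} (with its $f''$ being our diagonal $f$), so Axiom~(iii) yields the map $f'_j : \inv{f}(j) \to \inv{\sigma}(j)$ and Lemma~\ref{l1} identifies its unique fiber as $\inv{f'}(i)$, giving (i). Likewise the upper triangle $S' \xrightarrow{\pi} S'' \xrightarrow{f''} T''$, again with composite $f$, feeds Axiom~(iii) to produce the induced map $\pi_j : \inv{f}(j) \to \inv{f''}(j)$ of (ii).

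Next I would invoke the hypothesis that $\inv{\sigma}(j)$ is trivial. By the second part of Lemma~\ref{l1} this upgrades the ``unique fiber'' statement to an honest equality of objects $\inv{f'}(i) = \inv{f}(j)$. Substituting this equality into the source of $\pi_j$ is precisely what defines the map $\pi_{(i,j)} : \inv{f'}(i) \to \inv{f''}(j)$ of~(\ref{zitra_na_prohlidku_k_doktoru_Reichovi}); no new construction is needed, only the transport of $\pi_j$ along the identification.

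Finally, to obtain the \qb\ assertion I would reuse the upper triangle $S' \xrightarrow{\pi} S'' \to T''$ and appeal to Lemma~\ref{l2}: when $\pi$ is a \qb, that lemma guarantees that every induced fiber map $\pi_j : \inv{f}(j) \to \inv{f''}(j)$ is again a \qb, and under the equality $\inv{f'}(i) = \inv{f}(j)$ this map is exactly $\pi_{(i,j)}$. I do not expect a genuine obstacle here, since all the real work has been done in the two preceding lemmas; the only point demanding care is that Lemma~\ref{l1} supplies a strict equality of objects rather than a mere \im, which is what makes transporting the \qb\ property of $\pi_j$ legitimate and yields a map with the stated source $\inv{f'}(i)$ on the nose.
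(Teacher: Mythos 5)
Your proposal is correct and follows essentially the same route as the paper's own proof: parts (i)--(ii) come from Lemma~\ref{l1} together with Axiom~(iii), the map $\pi_{(i,j)}$ is defined by transporting $\pi_j$ along the strict equality $\inv{f'}(i)=\inv{f}(j)$ furnished by the second part of Lemma~\ref{l1}, and the quasibijection claim is exactly the application of Lemma~\ref{l2} to the upper triangle. Your closing remark that the equality of objects (rather than a mere isomorphism) is what legitimizes the transport is precisely the point the paper's composite $\inv{f'}(i)=\inv{f}(j)\stackrel{\pi_j}{\longrightarrow}\inv{f''}(j)$ encodes.
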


\begin{proof}
The first part immediately follows from Lemma~\ref{l1} and
Axiom~(iii). Under the assumption of the second part, one applies Lemma \ref{l1}(ii) to get an 
equality    $\inv{f'}(i)  = \inv{f}(j).$     
Then
$\pi_{(i,j)}$ is defined as the composite
\[
\pi_{(i,j)} : \inv{f'}(i)  = \inv{f}(j) \stackrel{\pi_j}\longrightarrow  
\inv{f''}(j).
\]
The rest follows
from Lemma~\ref{l2}.
\end{proof}

Thus, in the situation of Lemma~\ref{l3} with $\sigma$ a \qb, one has the {\em
  derived sequence\/} 
\begin{equation}
\label{e1}
\left\{\pi_{(i,j)} : 
\inv{f'}(i) \to \inv{f''}(j), j = |\sigma|(i)\right\}_{i \in |T'|}
\end{equation}
consisting of \qb{s} if $\pi$ is a \qb. 

\medskip

Central constructions of this work will require the following:

\begin{blow up}\label{bu} Let $\ttO$ be an operadic category.
Consider the corner
\begin{equation}
\label{c1}
\xymatrix@C=3.5em{S' \ar[d]_{f'}  &
\\
T' \ar[r]^\sigma_\sim &T''
}
\end{equation}
in which $\sigma$ is a \qb\ and $f' \in \DO$. Assume we are given
objects $F''_j$, $j \in |T''|$ together with a collection of maps
\begin{equation}
\label{e2}
\big\{\pi_{(i,j)} : \inv{f'}(i) \to F''_j,\ j = |\sigma|(i)\big\}_{i \in |T'|}.
\end{equation}
Then the corner~(\ref{c1}) can be completed uniquely  into the
commutative square
\begin{equation}
\label{eq:5}
\xymatrix@C=3.5em{S' \ar[d]_{f'} \ar[r]^\pi  & S'' \ar[d]^{f''}
\\
T' \ar[r]^\sigma_\sim &T''
}
\end{equation}
in which $f'' \in \DO$,  $\inv{f''}(j) = F''_j$ for $j \in |T''|$, and
such that the
derived sequence~(\ref{e1}) induced by $\pi$ coincides with~(\ref{e2}).
\end{blow up}

The requirement that $f',f'' \in \DO$ is crucial, otherwise 
the factorization would not be unique even in ``simple'' operadic
categories such as $\Fin$. It will sometimes suffice 
to assume the blow-up for $\sigma = \id$ only, i.e.\ to assume

\begin{weak_blow up}\label{wbu}
For any $f' :S' \to T$ in $\DO$ and
morphisms $\pi_i : \inv{f'}(i) \to F''_i$ in $\ttO$, $i \in |T|$, 
there exists a unique factorization of $f'$
\[
\xymatrix@C=2em@R=1.2em{S' \ar[rr]^\omega \ar[dr]_{f'} && S'' \ar[ld]^{f''}
\\
&T&
}
\] 
such that $f'' \in \DO$ and $\omega_i = \pi_i$ for all $i \in |T|$.
\end{weak_blow up}

Notice that $\omega \in \DO$ (resp.~$\omega \in \QO$) if and only if
$\pi_i \in \DO$  (resp.~$\pi_i \in \QO$) for all $i \in |T|$. 

\begin{remark}
If we require the weak blow-up axiom only for order-preserving $\pi_i$ then it 
simply means that the fiber functor
\[
\DO/T \to \DO^{ |T|}
\]
is a discrete opfibration. Such a condition for an operadic category
$\ttO$ (not only for its subcategory $\DO$) is closely related to
Lack's condition \cite[Proposition 9.8]{lack} which ensures that the
natural tensor product of $\ttO$-collections, which is only skew
associative in general, is genuinely associative. In fact, as we will
show elsewhere, under some restrictions natural in our context, the
weak blow-up axiom implies Lack's condition (see also Remarks 13--14 of \cite{GKW}  for other
important connections).
\end{remark}

\begin{corollary}
\label{zase_mam_nejaky_tik}
If the weak blow-up axiom is satisfied in $\ttO$, then
\[
\QO \cap \DO = \ttO_{\mathtt{disc}},
\]
the discrete category with the same
objects as $\ttO$. In particular, the only \qb{s}\ in $\DO$ are the
identities. 
\end{corollary}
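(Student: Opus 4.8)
The inclusion $\ttO_{\rm disc} \subseteq \QO \cap \DO$ is immediate, since identities are \qb s by Axiom~(ii) and clearly lie in $\DO$ (their cardinality is order-preserving). The plan is therefore to show that every $f : S \to T$ in $\QO \cap \DO$ is an identity.

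First I would isolate the following local observation: any \qb\ $g : X \to U_c$ whose target is a chosen local terminal object is necessarily the identity of $U_c$. Indeed, the domain-functor clause of Axiom~(iii) (the case $R = U_c$) identifies the unique fiber $\inv g(1)$ with the domain $X$; since $g$ is a \qb\ this fiber is some chosen $U_e$, forcing $X = U_e$. As $U_e$ and $U_c$ then lie in the same connected component they coincide, and the only endomorphism of a local terminal object is its identity.

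Next I would feed $f$ into Lemma~\ref{l2} applied to the triangle $S \xrightarrow{f} T \xrightarrow{\id_T} T$ with $\pi := f$. This shows that each induced fiber map $f_i : \inv f(i) \to \inv{\id_T}(i)$ is again a \qb. By Axiom~(ii) the fibers $\inv{\id_T}(i) = U_{d_i}$ are chosen local terminal objects, so the local observation above applies and yields $f_i = \id_{U_{d_i}}$ together with $\inv f(i) = U_{d_i}$, for every $i \in |T|$.

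Finally I would invoke the reformulation of the weak blow up axiom as the statement that the fiber functor $\DO/T \to \ttO^{\times |T|}$ is a discrete opfibration. Viewing $f$ as a morphism $(S,f) \to (T, \id_T)$ in $\DO/T$ --- legitimate precisely because $f \in \DO$ --- the previous step says the fiber functor sends it to the identity $(\,\id_{U_{d_i}}\,)_i$ of its domain. Since in a discrete opfibration the only lift of an identity with prescribed source is the identity, $f$ must equal $\id_{(S,f)}$; comparing codomains gives $(T,\id_T) = (S,f)$, whence $f = \id_T$. The one place where the hypothesis $f \in \DO$ is genuinely used is this last step, where it is needed to regard $f$ as a morphism of $\DO/T$; this is also why the statement fails for general \qb s, which may be non-trivial isomorphisms. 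The main obstacle is exactly the passage from ``all fiber maps are identities'' to ``$f$ is an identity'': the discrete-opfibration form of the weak blow up is precisely the tool that closes this gap, the trick being to use the terminal object $(T,\id_T)$ of $\DO/T$ as the codomain.
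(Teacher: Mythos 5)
Your proof is correct and is essentially the paper's own argument in different clothing: the paper compares the two factorizations $\phi = \phi\circ\id_S$ and $\phi = \id_T\circ\phi$, checks that both induce the identity maps $\id_{U_i}$ on fibers, and invokes the uniqueness clause of the weak blow up axiom, which is exactly your unique-lift-of-an-identity argument stated in the discrete-opfibration language that the paper itself records just before the corollary. Your detour through Lemma~\ref{l2} and the ``local observation'' is only a cosmetic variant of the paper's direct terminality and connected-component reasoning for the fiber maps.
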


\begin{proof}
It is clear that each identity belongs to $\QO \cap \DO$. On the other
hand, assume that $\phi : S \to T \in \QO \cap \DO$. Since it is a
\qb, all its fibers are trivial, $\inv{\phi}(i) = U_i$ for $i\in
|T|$. Consider now the two factorizations of $\phi$,
\begin{equation}
\label{posledni_den_v_Srni}
\xymatrix@R=1em{&\ar@{=}[ld]_{\id_S} \ar[dd]^\phi S & & \ar[dd]_\phi  \ar[dr]^\phi    S&
\\
S\ar[dr]^\phi & &  & &T\ar@{=}[ld]_{\id_T}
\\
&T&  & T \,.&
}
\end{equation} 
In the left triangle we have
$(\id_T)_i :U_i= \inv{\phi}(i) \to
\inv{\phi}(i) = U_i$, for $i\in |T|$, and therefore  $(\id_T)_i = \id_{U_i}$ by the
terminality of $U_i$. Let us turn our attention to the right triangle.

By Axiom~(ii) of an operadic category, all fibers of an identity
are trivial, thus
\[
(\phi)_i :U_i= \inv{\phi}(i) \to
\inv{{\id_T}}(i) = U_c
\]
for some chosen local terminal $U_c$. Since any morphism between trivial 
objects is an identity we see that both factorizations
in~(\ref{posledni_den_v_Srni}) are determined by the collection
$\id_{U_i} : \inv{\phi}(i) \to U_i$, $i \in |T|$, so by the
uniqueness in the blow-up axiom, they are the same.
\end{proof}

\begin{example}
Corollary~\ref{zase_mam_nejaky_tik} shows the power of the blow-up
axiom and illustrates how it determines the nature of an operadic
category. While it is satisfied in operadic categories underlying
``classical'' examples of operads, it is violated e.g.~in the category
of vines recalled in Example~\ref{Dnes_na_muslicky.}, whose  operads are braided operads, or in
Batanin's category of \hbox{$n$-trees}, whose  operads are $(n-1)$-terminal (but not pruned) globular \hbox{$n$-operads}~\cite[Section~4]{batanin:AM08}.

Let us look at vines first. 
For the automorphism $s \in \Vines(\bar{2},\bar{2})$ represented
by 

\begin{equation*}
\psscalebox{.8 .8} 
{
\begin{pspicture}(0,-2)(17.5,-3.7)
\definecolor{colour0}{rgb}{0.99607843,0.99607843,0.99607843}
\pscustom[linecolor=black, linewidth=0.04]
{
\newpath
\moveto(6.2,1.4985577)
}
\pscustom[linecolor=black, linewidth=0.04]
{
\newpath
\moveto(7.6,-1.5014423)
\lineto(7.836842,-1.9483173)
\curveto(7.955263,-2.1717548)(8.239474,-2.4811296)(8.405263,-2.5670671)
\curveto(8.571053,-2.6530046)(8.855263,-2.8592548)(8.973684,-2.9795673)
\curveto(9.092105,-3.0998797)(9.2578945,-3.3405046)(9.4,-3.7014422)
}
\psdots[linecolor=black, dotsize=0.24](7.6,-1.5014423)
\psdots[linecolor=black, dotsize=0.24](9.4,-1.5014423)
\psdots[linecolor=black, dotsize=0.24](9.4,-3.7014422)
\psdots[linecolor=black, dotsize=0.24](7.6,-3.7014422)
\psdots[linecolor=colour0, dotstyle=o, dotsize=0.6, fillcolor=white](8.4,-2.7)
\pscustom[linecolor=black, linewidth=0.044]
{
\newpath
\moveto(9.4,-1.5014423)
\lineto(9.163158,-1.9483173)
\curveto(9.044737,-2.1717548)(8.760526,-2.4811296)(8.594737,-2.5670671)
\curveto(8.428947,-2.6530046)(8.144737,-2.8592548)(8.026316,-2.9795673)
\curveto(7.907895,-3.0998797)(7.7421055,-3.3405046)(7.6,-3.7014422)
}
\end{pspicture}
}
\end{equation*}
and an integer $k$, one has the commutative diagram
\begin{equation}
\label{dva}
\hskip 1em
\xymatrix@C=3.5em{
\bar{2} \ar[r]^{s^{2k}}  \ar[d]_{\id}   & \bar{2} \ar[d]^{s^{-2k}} 
\\
\bar{2} \ar[r]^{\id}  & \ \bar{2}\,.
}
\end{equation}
The sequences of the fibers of both vertical maps are the same 
for each $k$, namely
$\{\bar{1}, \bar{1}\}$, and the derived sequence~(\ref{e1}) consists
of the identities, $\pi_{(1,1)} = \pi_{(2,2)} = \id$. 
Hence the upper horizontal map in~(\ref{dva}) is {\em not} uniquely
determined by its associated derived sequence, which violates the
requirement that the map $\pi$ in diagram~(\ref{eq:5}) is unique.
The map $s^{2k}$ is a nontrivial quasibijection in $\Vines_{\tt Ord}$.

In some categories the extension of the
corner~(\ref{c1}) into~\eqref{eq:5} may not exist.  We illustrate
it on the category of Batanin's $n$-trees,
cf.~\cite[Section~4]{batanin:AM08} for necessary definitions and
notation. The fibers of the map $f'$ of $2$-trees in 
\begin{equation}
\label{Bruhns}
\unitlength.5cm
\raisebox{-10em}{
\begin{picture}(10,10)(3,-.5)
\thicklines
\put(1,-.5){
\multiput(0,0)(0,1){3}{
\multiput(0.2,0)(0.2,0){14}{\makebox(0,0){$\cdot$}}
}
\put(0.5,2.5){\makebox(0,0)[b]{\scriptsize $1$}}
\put(2.5,2.5){\makebox(0,0)[b]{\scriptsize $2$}}

\put(1.5,1){\line(1,1){1}}
\put(1.5,1){\line(-1,1){1}}
\put(1.5,1){\line(0,-1){1}}
}
\put(4.5,.5){\vector(1,0){7}}
\put(8,.8){\makebox(0,0)[b]{{\scriptsize \id}}}
\put(12,-.5){
\multiput(0,0)(0,1){3}{
\multiput(0.2,0)(0.2,0){14}{\makebox(0,0){$\cdot$}}
}
\put(0.5,2.5){\makebox(0,0)[b]{\scriptsize $1$}}
\put(2.5,2.5){\makebox(0,0)[b]{\scriptsize $2$}}

\put(1.5,1){\line(1,1){1}}
\put(1.5,1){\line(-1,1){1}}
\put(1.5,1){\line(0,-1){1}}
}
\put(1,6){
  \multiput(0,0)(0,1){3}{
  \multiput(0.2,0)(0.2,0){14}{\makebox(0,0){$\cdot$}}
  }
  \put(1.5,0){\line(1,1){1}}
  \put(1.5,0){\line(-1,1){1}}
  \put(.5,1){\line(0,1){1}}
  \put(2.5,1){\line(0,1){1}}
\put(1.5,-1){\vector(0,-1){2}}\put(1,-2){\makebox(0,0)[r]{{\scriptsize $f'$}}}
\put(0.5,2.5){\makebox(0,0)[b]{\scriptsize $1$}}
\put(2.5,2.5){\makebox(0,0)[b]{\scriptsize $2$}}
}
\end{picture}
}
\end{equation}
are
\[
\unitlength.5cm
  \begin{picture}(10,2.5)(2,.5)
\thicklines
\put(0,0){
  \multiput(0,0)(0,1){3}{
  \multiput(0.2,0)(0.2,0){14}{\makebox(0,0){$\cdot$}}
  }
  \put(1.5,0){\line(1,1){1}}
  \put(1.5,0){\line(-1,1){1}}
  \put(.5,1){\line(0,1){1}}
\put(0.5,2.5){\makebox(0,0)[b]{\scriptsize $1$}}
\put(-.7,1){\makebox(0,0)[r]{$f'^{-1}(1) =$}}
}
\put(5,.8){\makebox(0,0)[b]{and}}
\put(11,0){
  \multiput(0,0)(0,1){3}{
  \multiput(0.2,0)(0.2,0){14}{\makebox(0,0){$\cdot$}}
  }
  \put(1.5,0){\line(1,1){1}}
  \put(1.5,0){\line(-1,1){1}}
  \put(2.5,1){\line(0,1){1}}
\put(2.5,2.5){\makebox(0,0)[b]{\scriptsize $1$}}
\put(-.7,1){\makebox(0,0)[r]{$f'^{-1}(2) =$}}
\put(3.2,1){\makebox(0,0)[l]{.}}
}
\end{picture}
\]
Take
\[
\unitlength.5cm
  \begin{picture}(10,3.3)(2,.2)
\thicklines
\put(0,0){
  \multiput(0,0)(0,1){3}{
  \multiput(0.2,0)(0.2,0){14}{\makebox(0,0){$\cdot$}}
  }
  \put(1.5,0){\line(1,1){1}}
  \put(1.5,0){\line(-1,1){1}}
  \put(.5,1){\line(0,1){1}}
\put(0.5,2.5){\makebox(0,0)[b]{\scriptsize $1$}}
\put(-.7,1){\makebox(0,0)[r]{$F''_1: =$}}
}
\put(5.74,.8){\makebox(0,0)[b]{and}}
\put(11,0){
  \multiput(0,0)(0,1){3}{
  \multiput(0.2,0)(0.2,0){14}{\makebox(0,0){$\cdot$}}
  }
\put(1.5,0){\line(0,1){2}}
\put(1.5,2.5){\makebox(0,0)[b]{\scriptsize $1$}}
\put(-.7,1){\makebox(0,0)[r]{$F''_2: =$}}
\put(3.2,1){\makebox(0,0)[l]{.}}
}
\end{picture}
\]
Defining the maps in~(\ref{e2}) as $\pi_{(1,1)} = \id$ and taking
$\pi_{(2,2)}$ to be
the obvious unique morphism, it is easy to check that the corner in~\eqref{Bruhns} cannot be completed to~(\ref{eq:5}).  
The unique map
\[
\unitlength.5cm
  \begin{picture}(10,3)(0,.2)
\thicklines
\put(7.4,0){
  \multiput(0,0)(0,1){3}{
  \multiput(0.2,0)(0.2,0){14}{\makebox(0,0){$\cdot$}}
  }
  \put(1.5,0){\line(1,1){1}}
  \put(1.5,0){\line(-1,1){1}}
  \put(.5,1){\line(0,1){1}}
\put(0.5,2.5){\makebox(0,0)[b]{\scriptsize $1$}}
}
\put(4,1){\vector(1,0){2}}
\put(0,0){
  \multiput(0,0)(0,1){3}{
  \multiput(0.2,0)(0.2,0){14}{\makebox(0,0){$\cdot$}}
  }
\put(1.5,0){\line(0,1){2}}
\put(1.5,2.5){\makebox(0,0)[b]{\scriptsize $1$}}
}
\end{picture}
\]
provides an example of a quasibijection in $\hbox{2-{\tt
    Trees}}_{\tt Ord}$ which is not the identity, not even an isomorphism.
\end{example}
%

\begin{definition}
\label{dnes_prednaska_na_Macquarie}
An operadic category $\ttO$ is {\em factorizable\/} 
if each morphism $f \in \ttO$ decomposes, not necessarily uniquely, 
as $\phi  \sigma$ for some $\phi \in \DO$ and
$\sigma \in \QO$ or, symbolically, $\ttO = \DO 
\QO$. 
\end{definition} 

\begin{definition}
\label{zase_jsem_podlehl}
An operadic category $\ttO$ is {\em strongly factorizable\/}  if 
each morphism $f : T\to S $ decomposes  {\em uniquely\/} 
as $\phi \sigma$ for some $\phi \in \DO$ and
$\sigma \in \QO$ such that the induced map between the fibers
\[
\sigma_i:f^{-1}(i)\to \phi^{-1}(i)
\]
is an order-preserving quasibijection for each $i\in |S|$.
\end{definition}

The first part of the following lemma has the same conclusion as  Corollary \ref{zase_mam_nejaky_tik} but the assumptions are different.

\begin{lemma} 
\label{Slozim si Tereje snad uz tuto sobotu.}
In a strongly factorizable operadic category, any order-preserving quasibijection is an identity.  In particular,  the
morphisms on fibers induced by the \qb\ $\sigma$ in the unique
factorization $f = \phi\sigma$  are always the identities.
\end{lemma} 

\begin{proof} Let $\sigma:T\to S$ be an order-preserving quasibijection. Then there are two factorizations of the unique morphism $!:T\to U_c.$ Namely
$T\stackrel{\id}{\to} T\to U_c$ and $T\stackrel{\sigma}{\to} S\to U_c.$
Since such a~factorization must be unique we have $\sigma=\id.$  
\end{proof} 


\begin{lemma} 
\label{I_laboratorni_vysetreni_mne_ceka.}
Assume that in $\ttO$ all \qb{s} are invertible,
 $\ttO$ is  factorizable and satisfies the weak blow-up axiom.
Then $\ttO$ is strongly factorizable and  satisfies the 
blow-up~axiom.
Schematically
\[
\hbox {\rm
 {\QBI} \& {\Fac} \& {\WBU}} 
\Longrightarrow  \hbox {\rm {\SBU} \& \SFac}, 
\]
with the obvious meaning of the abbreviations.
\end{lemma}

\begin{proof} 
Let $f:T\to S$ be a morphism in $\ttO$. We factorize it into a
\qb\ $\omega$ followed by an order-preserving $\eta:T'\to
S$ as in the left upper triangle of
\begin{equation}
\label{Nechce_se_mi_byt_pres_vikend_v_Praze.}
\xymatrix@C=3.5em{T \ar[r]^f \ar[d]^\omega_\sim   &S
\\
T' \ar[ur]^\eta \ar[r]^\pi_\sim &\ Q\,. \ar[u]^\phi
}
\end{equation}
By virtue of Lemma \ref{l2} for $i\in |S|$, let $\pi_i :  \eta^{-1}(i) \to  f^{-1}(i)$ be the 
\qb\ inverse to $\omega_i:
f^{-1}(i)\to \eta^{-1}(i)$. Using the weak blow-up axiom we uniquely 
factorize $\eta$ into  $\phi \kompozice \pi$ such that  $\pi$ on fibers
induces the morphisms $\pi_i$, $i \in |S|$, see the lower right triangle
of~(\ref{Nechce_se_mi_byt_pres_vikend_v_Praze.}). Notice that
$\pi$ is a quasibijection as well. We thus
have a factorization of $f$ into a  \qb\ $\sigma  := \pi \kompozice \omega$
followed by $\phi\in \DO$. By functoriality of the fiber functor, $\sigma$ induces the identities
$\sigma_i = \pi_i \omega_i = \id$ of
the fibers.

Suppose we have two such factorizations of $f$, namely   
\[
\xymatrix@R=1em@C=3.5em{
&Q'\ar[rd]^{\phi'} \ar@{-->}[dd]^p_\sim   &
\\
T \ar[ru]^{\sigma'}_\sim\ar[rd]^{\sigma''}_\sim   &&S \,.
\\
&Q''\ar[ru]^{\phi''}&
}
\]
By our assumptions \qb{s} are invertible, hence we have a unique \qb\
$p:Q'\to Q''$ which induces identities of the fibers over $S$ by Corollary \ref{zase_mam_nejaky_tik}. It follows from the
uniqueness part of the weak blow-up  axiom that $p =
\id$. So the decomposition $f= \phi \kompozice \sigma$ is unique, thus $\ttO$ is
strongly~factorizable.

It remains to prove the general version of the blow-up axiom. Let 
\begin{equation}
\label{Zase mne boli koleno ze nemohu behat.}
\xymatrix@C=3.5em{S' \ar[d]_{f'}  &
\\
T' \ar[r]^\sigma_\sim &T''
}
\end{equation}
be the corner for the blow-up axiom as in~(\ref{c1}) and 
$\pi_{(i,j)} : \inv{f'}(i) \to F''_j$,\ $j = |\sigma|(i)$ for $i \in
|T'|$, a~collection of maps. 
By the weak blow-up axiom  we have a unique factorization 
\hbox{$S'\stackrel{\gamma}{\to} S''\stackrel{g}{\to} T'$} of $f'$  as in
\[
\xymatrix@C=3.5em{S' \ar[d]_{f'}  \ar[r]^\gamma   
& S'' \ar[r]^\varpi_\sim \ar[dl]_g & Q\ar[dl]_\eta
\\
T' \ar[r]^\sigma_\sim &T''&
}
\]
such that $\gamma_i = \pi_{(i,j)}$ for $i \in |T'|$.
We then apply the strong factorization axiom to $\sigma \kompozice g$ and get a
factorization $S''\stackrel{\varpi}{\to} Q\stackrel{\eta}{\to} T''$ 
where $\varpi \in \QO$  and  $\eta \in\DO$. 

Since $\varpi$ is a \qb,
the derived sequence $\varpi_{(i,j)}$ consists of
order-preserving quasibijections. We already established that
$\ttO$ is strongly factorizable,  thus each $\varpi_{(i,j)}$ is the
identity by Lemma~\ref{Slozim si Tereje snad uz
  tuto sobotu.}, therefore
\[
(\varpi \kompozice \gamma)_{(i,j)} =
\varpi_{(i,j)} \kompozice \gamma_i = \gamma_i =
\pi_{(i,j)}, \ \hbox {for each  $i \in |T'|$.}
\]
We conclude that
\begin{equation}
\label{Dnes bude svickova.}
\xymatrix@C=3.5em{S' \ar[d]_{f'}\ar[r]^\pi  & Q\ar[d]^\eta
\\
T' \ar[r]^\sigma_\sim &T''
}
\end{equation}
with $\pi := \varpi \kompozice \gamma$ 
is a completion of the corner~\eqref{Zase mne boli koleno ze nemohu
  behat.} required by the  blow-up axiom.

To prove that the completion~\eqref{Dnes bude svickova.} is unique, 
assume that
$S'\stackrel{\pi'}{\to} Q'\stackrel{\eta'}{\to} T''$ is another
completion of~\eqref{Zase mne boli koleno ze nemohu behat.}, and~let 
\[
\xymatrix@R=1em{&P\ar[dr]^\lambda  &
\\
S'\ar[rr]^\pi \ar[ur]^\rho && Q
} \raisebox{-1em}{\hbox{\hskip 1em resp.\ \hskip 1em}}
\xymatrix@R=1em{&P'\ar[dr]^{\lambda'}  &
\\
S'\ar[rr]^{\pi'} \ar[ur]^{\rho'} && Q'
}
\]
be the unique factorizations of $\pi$ resp.~$\pi'$ to a quasibijection
inducing identities on fibers followed by an order-preserving map. The
existence of factorizations of this type is
guaranteed by the already proven $\SFac$ combined
with $\QBI$ assumed in Corollary~\ref{zase_mam_nejaky_tik}.
Consider the commutative diagrams
\begin{equation}
\label{Za chvili pujdu s Jarkou na svickovou.}
\xymatrix@C = +2.5em@R = +1em{ & P \ar[dd]^(.3){\eta\lambda} \ar[dr]^\lambda & 
\\
S'  \ar[ur]^\rho    \ar@{-}[r]^(.7){\pi}\ar[dr]_a & \ar[r] & \ Q\ , \ar[dl]^\eta
\\
&T''& 
}
\hskip 1em
\xymatrix@C = +2.5em@R = +1em{ & P' \ar[dd]^(.3){\eta'\lambda'} 
\ar[dr]^{\lambda'} & 
\\
S'  \ar[ur]^{\rho'}    \ar@{-}[r]^(.7){\pi'}\ar[dr]_a & \ar[r] & Q'\,, \ar[dl]^{\eta'}
\\
&T''&
}
\end{equation}
where $a := \eta\pi = \eta'\pi'$,  and take $j\in |Q| =  |Q'|,\ i :=
|\eta|(j) =  |\eta'|(j)$.
By Axiom (iii) of an operadic category the diagram
\begin{subequations}
\begin{equation}
\label{rho}
    \xymatrix@C = +1em@R = +1em{
      a^{-1}(i)      \ar[rr]^{\rho_i} \ar[dr]_{\pi_i} & & (\eta\lambda)^{-1}(i) \ar[dl]^{\lambda_i}
      \\
      &\eta^{-1}(i)&
    }
\end{equation}
related to the left diagram in~\eqref{Za chvili pujdu s Jarkou na
  svickovou.} commutes and induces a morphism
$(\rho_i)_j: \pi_i^{-1}(j)\to \lambda_i^{-1}(j)$ between fibers. 
By Axiom (iv) we have
\[
\lambda^{-1}(j)=\lambda_i^{-1}(j) \ \mbox{and} \
\pi^{-1}(j)=\pi_i^{-1}(j).
\] 
Axiom (v) thus gives
$\rho_j = (\rho_i)_j$,  but $\rho_j= \id$  by construction, hence
$(\rho_i)_j =\id.$ 

Since $\lambda$ and $\eta$ are order preserving,
the morphism $\lambda_i$ in~\eqref{rho} is also order preserving.  
We see that~(\ref{rho}) represents the unique factorization of the
morphism $\pi_i$ to a~quasibijection inducing identities on fibers
followed by an order-preserving map.  In exactly the same manner
we obtain a unique factorization
\begin{equation}\label{rho'}
    \xymatrix@C = +1em@R = +1em{
      a^{-1}(i)      \ar[rr]^{\rho'_i} \ar[dr]_{\pi'_i} & & (\eta'\lambda')^{-1}(i) \ar[dl]^{\lambda'_i}
      \\
      &\eta'^{-1}(i)&
    }
\end{equation}
\end{subequations}
related to the right diagram in~\eqref{Za chvili pujdu s Jarkou na svickovou.}.

Notice that, by assumption, the induced morphism
$\pi_i:a^{-1}(i)\to \eta^{-1}(i)$ in~(\ref{rho}) is equal to
$\pi'_i:a^{-1}(i)\to \eta'^{-1}(i)$ in~(\ref{rho'}).
Hence 
$\rho_i = \rho'_i$, and thus the quasibijection
$q := \rho\rho'^{-1}$  in the diagram
\[
\xymatrix@R=1em@C=3.5em{
&P'\ar[rd]^{\eta'\lambda'} \ar@{-->}[dd]^q_\sim   &
\\
S' \ar[ru]^{\rho'}_\sim\ar[rd]_{\rho}^\sim   &&T''
\\
&P\ar[ru]_{\eta\lambda}&
}
\]
induces 
the identity maps between the fibers of $\eta\lambda$ and~$\eta'\lambda'$.
The uniqueness part of the weak blow-up axiom tells us that $q= \id$, $P= P'$ and
$\eta\lambda = \eta'\lambda'$. Using the above facts, we can
modify the right diagram in~(\ref{Za chvili pujdu s Jarkou na
  svickovou.}) so that we will now be comparing the diagrams
\[
\xymatrix@C = +2.5em@R = +1em{ & P \ar[dd]^(.3){\eta\lambda} \ar[dr]^\lambda & 
\\
S'  \ar[ur]^\rho    \ar@{-}[r]^(.7){\pi}\ar[dr]_a & \ar[r] & Q \ar[dl]^\eta
\\
&T''& 
}
\hskip 1em \raisebox{-2.3em}{\hbox{and}} \hskip 1em
\xymatrix@C = +2.5em@R = +1em{ & P \ar[dd]^(.3){\eta\lambda} 
\ar[dr]^{\lambda'} & 
\\
S'  \ar[ur]^{\rho}    \ar@{-}[r]^(.7){\pi'}\ar[dr]_a & \ar[r] 
&Q'\,. \ar[dl]^{\eta'}
\\
&T''&
}
\]

By assumption, the morphisms between fibers of $a$ and $\eta$,
resp.~$a$ and $\eta'$, induced by $\pi$, resp.~$\pi'$, coincide. Since
$\rho$ is invertible by $\QBI$, the morphisms induced by
$\lambda = \pi \rho^{-1}$, resp.~$\lambda' = \pi' \rho^{-1}$, between
the fibers of $\eta\lambda$ and $\eta$, resp.\ $\eta\lambda$ and $\eta'$,
coincide as well, thanks to the functoriality of the fiber functors.
Using the uniqueness in $\WBU$ we conclude that $\lambda = \lambda'$
and $\eta = \eta'$, thus finally $\pi = \pi'$ and $\eta= \eta'$ as
required. Notice that Axiom~(v) of an
operadic category played a crucial role in the second part of the proof.
\end{proof}

\begin{lemma}
\label{Podivam_se_do_Paramaty?}
Any isomorphism in an operadic category has  local 
terminal objects as its
fibers. Conversely, in a factorizable operadic category in which all
\qb{s} are isomorphisms and the weak blow-up axiom is fulfilled, a morphism whose fibers are
local terminals  is an~isomorphism.
\end{lemma}

\begin{proof}
 Let $\phi:S\to T$ be an isomorphism with inverse $\psi$. Consider
 the commutative diagram over~$T$:
\[
\xymatrix@C=2em@R=1.2em{S\ar[r]^{\phi} 
\ar[dr]_{\phi} &T\ar[r]^{\psi} \ar[d]^(.4){\id}&\ S  \ar[ld]^{\phi}
\\
&\phantom{.}T \,.&
}
\]
By functoriality of the fiber functor this diagram induces isomorphisms from the fibers of 
$\phi$ to the fibers of the identity morphism of~$T$. 
Therefore the fibers of $\phi$ are isomorphic to trivial objects,
so they are all local~terminal.

Conversely,  suppose an operadic category $\ttO$ is factorizable with
all
\qb{s} isomorphisms, and suppose that all fibers of $\phi:A\to T$ are local
terminals. By assumption, one can factorize $\phi$ as a \qb\
$\sigma$ followed by $\xi \in \DO$. The \qb\ $\sigma$ induces 
\qb{s}, hence isomorphisms, between the fibers of $\phi$ and
$\xi$. So it will be enough to show that any $\xi: R\to S$ in $\DO$
whose fibers are
local terminals is an isomorphism.

Let $F_i$ denote the fiber of $\xi$ over $i \in |S|$. Since each
$F_i$ is local terminal,  we have by assumption the
unique isomorphism $\xi_i:F_i\to U_{c_i}$ for each $i$ and some $c_i\in \pi_0(\ttO)$, and
its inverse $\eta_i :  U_{c_i} \to F_i$.  By the weak blow-up axiom 
there exists a unique
factorization of $\id:S\to S$ as $S\stackrel{a}{\to} 
Q\stackrel{b}{\to}
S$ such that $a$  induces the morphisms $\eta_i$ on the fibers.  
The following diagram
\[
\xymatrix@C=2em@R=1.2em{R\ar[r]^{\xi} 
\ar[dr]_{\xi} &S\ar[r]^{a} \ar[d]^(.3){\id}&Q  \ar[ld]^{b}
\\
&S&
}
\]
in $\DO$ commutes
and by functoriality it induces the identity morphisms between the
fibers of $\xi$ and $b$. By the uniqueness part of the weak 
blow-up axiom $Q=R$, we have $b= \xi$ and $\xi\kompozice a = \id_R$. 
Repeating the same argument we find also that $a\kompozice \xi = \id_S$,
hence $\xi$ is an isomorphism. 
\end{proof}

\begin{lemma}
\label{Podival_jsem_se_do_Paramaty!}
In a factorizable operadic category $\ttO$ in which all
\qb{s} are isomorphisms and the weak blow-up axiom is fulfilled,
each $f \in \ttO$ decomposes as $\psi \kompozice \omega$,
where $\omega$ is an isomorphism, $\psi$ is order preserving and all  local terminal fibers of $\psi$  are trivial.
\end{lemma}

\begin{proof}
Decompose $f$ into $A \stackrel\sigma\to X \stackrel\phi\to B$ with
$\sigma$ a
\qb\  and $\phi \in \DO$ using the factorizability in $\ttO$. 
By the weak blow-up axiom, one has the diagram
\begin{equation}
\label{v_Koline_s_chripkou}
\xymatrix{X \ar[rr]^{\tilde \sigma} \ar[dr]_\phi && Y \ar[ld]^\psi
\\
&B&
}
\end{equation}
in which $\psi$ has the same non-terminal fibers as $\phi$ and all
its terminal fibers  are trivial, and $\tilde \sigma$
induces the identity maps between non-terminal fibers. By Axiom (iv) of operadic categories a fiber of $\tilde\sigma$ is either equal to a fiber of the identity or to a fiber of a morphism $!:t\to U_c$ for some $c\in \pi_0(\ttO)$ where $t$ is a local terminal. Thus all 
fibers of $\tilde \sigma$ are local terminals, so it is an \im\ by Lemma
\ref{Podivam_se_do_Paramaty?}. The desired factorization of $f$ is then
given by $\psi$ in~(\ref{v_Koline_s_chripkou}) and 
$\omega := \tilde \sigma \kompozice \sigma$.
\end{proof}

In the rest of the paper, the notation 
$F \FIB T  \stackrel{\phi}\longrightarrow t$
or 
$F \FIB T \to t$
 when $\phi$ is understood 
 will express that
$\phi: T \to t$ is the unique map to a local terminal object $t$
and that $F$ is the fiber of $\phi$. It follows from
Axiom~(i) of an operadic category that each local terminal object has
cardinality $1$, so $F$ is unique and is uniquely determined by $t$. 
 
\begin{definition}
\label{Kveta_asi_spi.}
The {\em unique fiber condition\/} for an operadic category $\ttO$
requires that, if the fiber of the unique morphism 
$\phi:T\to t$ to a local terminal object $t$ is $T$, then $t$ is a
chosen local terminal object $U_c$  for some $c\in \pi_0(\ttO)$. In other words, the only situation when
$T \FIB T \to t$ is when $t$ is trivial.
\end{definition}

\begin{lemma} 
\label{twolocal}
Let $F \FIB T  \stackrel{\phi'}\longrightarrow t'$ 
and $F \FIB T \stackrel{\phi''}\longrightarrow t''$ be morphisms to local
terminal objects with the same fiber $F$. If the weak blow-up and unique
fiber conditions are satisfied, then $\phi' = \phi''$.
\end{lemma}

\begin{proof} Consider the commutative triangle 
\[
\xymatrix@C=2em@R=1.2em{T \ar[rr]^{\phi''} 
\ar[dr]_{\phi'} && t'' \ar[ld]^{\xi}
\\
&t'&
}
\]
in which $\xi$ is the unique map between the local terminal objects.
We have the induced morphism of fibers $\phi''_1:F\to 
\inv{\xi}(1)$. By Axiom~(iv) of the 
operadic categories 
the fiber of this morphism is $F$. 
As $\inv{\xi}(1)$ is local terminal by Lemma~\ref{Podivam_se_do_Paramaty?},
the unique fiber condition implies that $\phi''_1 = \, !:F\to U_c$ is the unique map to a
chosen local terminal object.  This means that the fiber
$\xi^{-1}(1)$ is $U_c$, so $\xi$ is a \qb. By
Corollary~\ref{zase_mam_nejaky_tik}, $\xi$
must be the identity.
\end{proof}

\begin{definition} 
\label{Porad_nevim_jestli_mam_jit_na_ty_narozeniny.}
An operadic category $\tt O$  is
{\em rigid\/} if, given $\phi \in \DO$, the only \im\ $\sigma$ that makes
\begin{equation}
\label{Skrabe_mne_v_krku.}
\xymatrix@C=3.5em{S \ar[d]_{\phi} \ar@{=}[r]  & S \ar[d]^{\phi}
\\
T \ar[r]^\sigma_\cong &T
}
\end{equation}
commutative is the identity $\id : T = T$.
\end{definition}

\begin{example} The category $\Fin$ is not rigid, but its subcategory
$\Surj$ of nonempty finite sets and their surjections is. \end{example}

\begin{definition} 
\label{constant-free}
An operadic category $\ttO$ is {\em constant-free\/} 
if $|f|$ is surjective for each $f \in \ttO$. 
Equivalently, $\ttO$~is constant-free
if the cardinality functor $\ttO\to \Fin$  factorizes through
the operadic category $\Surj$.
\end{definition}

\begin{lemma}
\label{Podivam_se_do_Paramaty??} If a constant-free operadic category satisfies 
the weak blow-up and the unique fiber conditions, then it is
 rigid. Schematically
\[
\hbox {\rm {\UFB} \& {\WBU}} 
\Longrightarrow  \hbox {{\Rig}}. \]

\end{lemma}

\begin{proof}
 
Since the category of finite sets and surjections
is obviously rigid, one has $|\sigma|
  = \id$  for $\sigma$
in~(\ref{Skrabe_mne_v_krku.}). For each $i\in |T|$ we have the induced morphism of the fibers
\[
\phi_i:  \phi^{-1}(i)   \to \sigma^{-1}(i)
\]
whose unique fiber is $\phi^{-1}(i)$ by Axiom~(iv) of an
operadic category. 
The fiber $\sigma^{-1}(i)$ is local terminal by
Lemma~\ref{Podivam_se_do_Paramaty?}, thus, 
by the unique fiber condition,
$\sigma^{-1}(i)$ is trivial, so $\sigma$ is a \qb. Hence, it
must be the identity by Corollary~\ref{zase_mam_nejaky_tik}.
\end{proof}

\section{The operadic category of graphs}
\label{Ceka_mne_Psenicka.}

In this section we introduce the operadic category $\Gr$ of ordered
graphs.  The adjective ``ordered''
indicates that the sets of flags, vertices and legs of
graphs in $\Gr$ have prescribed total orders. The
category $\Gr$ and its modifications will play the fundamental r\^ole in the
second paper of the series \cite{sydney2}.
We will prove that it is a constant-free strongly
factorizable operadic category satisfying the  blow-up axiom in
which all \qb{s} are invertible.
Moreover, $\Gr$ is strictly graded (see Definitions \ref{grad} and \ref{sgrad}) by the number of
edges. 
We also show that $\Gr$ satisfies the unique fiber condition and is rigid.
We start by a more structured version of the standard concept
of graphs as recalled e.g.\ in
\cite[Definition~II.5.23]{markl-shnider-stasheff:book}.

\begin{definition} 
\label{pre}
 A {\em preordered graph\/} $\Gamma$ is a pair $(g,\sigma)$ consisting
of an  order-preserving map $$g:F\to V, \ V\ne \emptyset$$ in the category $\Fin$  together with an
involution $\sigma$ on $F$.
\end{definition}

Notice that we do not require the geometric 
realization~\cite[Section~II.5.3]{markl-shnider-stasheff:book}
of preordered graphs to be connected.
Elements of $\Flag(\Gamma) := F$ are the {\em flags\/} (also called {\em half-edges\/})
of $\Gamma$ and elements of $\Vert(\Gamma) := V$ are its
{\em vertices\/}. The elements of the set $\Leg(\Gamma)$ of fixed points of $\sigma$ are called the {\em legs\/} of
$\Gamma$ while nontrivial orbits of $\sigma$ are its {\em edges\/}. The
{\em endpoints\/} 
of an edge $e = \{h_1,h_2\}$ are $g(h_1)$ and $g(h_2)$.

For any $v\in V$, the set $g^{-1}(v)$ of flags adjacent to
$v$ inherits a linear order from $F$ which we call 
the {\em local order\/} at $v$. 
We may thus equivalently
define a preordered graph as a map
$g :F\to V$ from a finite set $F$  into
a~linearly ordered set $V$ with the additional data consisting of
linear orders of each  $\inv g(v)$, $v \in V$. The lexicographic order
combining the order of $V$ with the local orders makes $F$ a finite
ordinal, and the two definitions coincide.

We will often use a short notation $(F,V)$ or $(F,g,V)$ for a preordered graph $(g,\sigma)$ if we want to specify its set of vertices and flags only. We hope it will not lead to any confusion.  

A {\em morphism\/} 
of preordered graphs $\Phi:\Gamma\to \Gamma'$ is a pair $(\psi,\phi)$
of morphisms of finite sets such that the diagram
\begin{equation}
\label{graphsmorphism}
\xymatrix@C=3.5em{F \ar[d]_{g}  & F'\ar@{_{(}->}[l]_{\psi}
\ar[d]^{g'}
\\
V \ar@{->>}[r]^{\phi} &V'
}
\end{equation}
commutes. We moreover require $\phi$ to be a surjection and require $\psi$ 
to be 
equivariant with respect to the involutions
and induce
a bijection on fixed points. Thus $\psi$
injectively maps flags to flags and bijectively legs to legs.
The pair $(\psi,\phi)$ must satisfy the following condition: If $\phi(i) \ne \phi(j)$ and
$e$ is an edge with endpoints $i$ and $j$ then there exists an edge
$e'$ in $\Gamma'$ with endpoints $\phi(i)$ and $\phi(j)$ such that $e=
\psi(e')$.
Notice that we denote by the same symbol both the map of
  flags and the obvious induced map of edges.
Preordered graphs and their morphisms form a category {\em of preordered graphs~$\ttprGr$.}

\begin{remark} 
Our definition of morphism of graphs goes back to
Getzler\textendash{}Kapranov~\cite{getzler-kapranov:CompM98} and 
Borisov\textendash{}Manin~\cite{borisov-manin}. It is simultaneously
  more structured than the former one since
  we want to take orders of flags, legs and vertices into account, and less complicated (but still
  more structured) that the latter. A detailed
  discussion of Borisov\textendash{}Manin's definition can be found in
  \cite{kock-cospan}.
\end{remark}

The {\em fiber\/} $\inv{\Phi}(i)$ 
of a map $\Phi = (\psi,\phi): \Gamma \to \Gamma'$ 
in~(\ref{graphsmorphism}) over $i\in V'$ is a preordered
graph whose set of vertices is $\phi^{-1}(i)$ and whose
set of flags is $(\phi g)^{-1}(i).$ 
The involution $\tau$ of
$\inv{\Phi}(i)$ 
is defined as 
\[
\tau(h) := 
\begin{cases}   h& \hbox {if $h\in \Im(\psi)$}
\\ 
\sigma(h)& \hbox {if $h\notin \Im(\psi)$,} 
\end{cases}
\]
where $\sigma$ is the involution of $\Gamma$.
Observe that $h\notin \Im(\psi)$ if
and only if $\sigma(h)\notin \Im(\psi)$.

\begin{definition} 
\label{pisu_jednou_rukou} 
Let $(\psi,\phi): \Gamma \to \Gamma'$ be a map of preordered graphs. 
\begin{itemize}\item[(i)] The map
$(\psi,\phi)$ is  called a {\em local reordering\/} if  $\phi = \id$ and $\psi$ is 
an isomorphism. 
\item[(ii)] The map $(\psi,\phi)$ is  called a  {\em local isomorphism\/} 
 if $\phi$ is a bijection and
$\psi$ restricts to an order preserving isomorphism $\inv{g'}(j)\cong
\inv{g}(i)$ for each $i \in V'$, $j = \phi(i)$.
\item[(iii)] The map $(\psi,\phi)$ is  called a  {\em contraction\/} 
if  $\phi$ is order~preserving. A contraction will be also called an 
{\em order preserving\/} morphism.
\item[(iv)] The map $(\psi,\phi)$ is  called a  {\em pure contraction\/} if both $\psi$ and $\phi$ are order preserving. \end{itemize} \end{definition}

\begin{lemma}
  \label{uniquepurecontraction} Let
  $\Phi_0=(\psi_0,\phi_0): (F,g,V) = \Gamma \to \Gamma_0 =
  (F_0,g_0,V_0) $ and
  $\Phi_1=(\psi_1,\phi_1): \Gamma \to \Gamma_1 = (F_1,g_1,V_1) $ be
  two pure contractions such
  that 
\[
\phi:=\phi_0 = \phi_1:V\to V_0 = V_1,
\] 
that is $\Phi_0$ and
    $\Phi_1$ are equal on vertices, and
\[
\Phi_0^{-1}(i) = \Phi_1^{-1}(i)
\] 
for  $i\in V_0$,  that is, $\Phi_0$ and $\Phi_1$ have equal fibers.
Then $\Gamma_0 = \Gamma_1$ and $\Phi_0 = \Phi_1.$ 
\end{lemma}

\begin{proof} We have to prove that $F_0 = F_1.$ For this let
  $h_0\in F_0, \ i = g_0(h_0)$ and consider
  $h=\psi_0(h_0) \in (\phi g)^{-1}(i).$ Since $h\in {\it Im}(\psi_0)$
  the flag $h$ is a fixed point of the involution $\tau_0$ of the fiber
  $\Phi_0^{-1}(i).$ Then it is also a fixpoint of the involution
  $\tau_1$ of the fiber $\Phi^{-1}_1(i).$ Hence, there is a unique
  $h_1 \in F_1$ such that $h = \psi_1(h_1).$ We then have a map
  $p:F_0\to F_1$ which is obviously an order preserving bijection
  which commutes with $g_0$ and $g_1.$ So $F_0 = F_1$ and, moreover,
  $g_0 = g_1.$
\end{proof}

\begin{definition}\label{contractiondata}  
Let $\Gamma = (F,{g}, V)$ be a preordered graph. 
\begin{itemize}
\item[(i)] {\em Contraction data} for $\Gamma$ consists of an order preserving surjection
$\phi: V\epi V'$ and for each $i\in V'$ a $\sigma$-free and $\sigma$-closed subset $E_i 
 \subset(\phi g)^{-1}(i).$ 
\item[(ii)] For $i\in V'$, the  $i$th     {\em fiber associated to the contraction data } is the graph $\Gamma_i$ given as the restriction $F_i:= (\phi g)^{-1}(i) \to \phi^{-1}(i)=: V_i$ of $g$, along the involution which agrees with $\sigma$ for the flags in $E_i$ and is otherwise trivial.  

\end{itemize}
\end{definition}

The following Lemma shows that the contraction data are in one-to-one 
correspondence with pure contraction morphisms with domain  $\Gamma$ in which   $E_i$, $i \in V'$ play the roles of  sets of edges which we contract to a vertex $i.$

\begin{lemma}
\label{contractions}  
Given a contraction data $(\phi,E_i)$ for $\Gamma = (F,g,V)$ there is  a 
preordered graph $\Gamma'$ together with a contraction $(\psi,\phi):\Gamma \to \Gamma'$ whose vertex map  $\phi$ is that from the contraction data and whose fibers are the fibers associated to the contraction data.
Moreover, there is a unique such graph for which $(\psi,\phi)$ is a pure contraction. 
\end{lemma}

\begin{proof} We construct $\Gamma'$ as the graph whose set of
  vertices is $V'$ and whose set of flags  is $F':=
  F \setminus \bigcup_{i\in V'}E_i$.  The map $g':F'\to V'$ is the
  restriction of the composite $\phi \kompozice g$, as shown in
\[
\xymatrix@C=3.5em{F \ar[d]_{g}  &  \  F':=
  F\setminus \bigcup_{i\in V'}E_i \ar@{_{(}->}[l]_(.7){\psi}
\ar[d]^{g'}
\\
V \ar@{->>}[r]^{\phi} &V'\,.
}
\]
 It is easy to see that
$(\psi,\phi)$ is a
pure contraction. Uniqueness follows from Lemma
\ref{uniquepurecontraction}. 
\end{proof}

Despite the fact that preordered graphs do not form an operadic
category, the following  version of the weak blow-up 
condition for pure contractions makes sense.

\begin{lemma}
\label{bupforcontractions} 
Let $\Phi = (\psi,\phi):\Gamma= (F,g,V)\to \Gamma' = (F',g',V')$ be a pure contraction with fibers
$\Gamma_i = (F_i,V_i)$, $i \in V'$. 
Given pure contractions $\Xi_i:\Gamma_i\to
\Lambda_i$ for each $i\in V'$, there exists a unique
factorization of $\Phi$ as a composite of pure contractions
\begin{equation}
\label{po_dlouhe_dobe_opet_pisu}
\xymatrix@C=2em@R=1.2em{\Gamma\ar[rr]^a \ar[dr]_{\Phi} && \Lambda \ar[ld]^{b}
\\
&\Gamma'&
}
\end{equation} 
such that the induced map $a_i$ of the fibers equals $\Xi_i$, $i \in V'$.
\end{lemma}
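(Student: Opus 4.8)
The plan is to reduce the statement to the explicit description of pure contractions furnished by Lemma~\ref{contractions}: out of a fixed preordered graph, a pure contraction is both determined by and freely specified by an order-preserving surjection of its vertices together with, for each target vertex, a set of edges to be contracted. First I would record the given data in these terms. Writing $\Phi=(\psi,\phi)$ with $\phi:V'\epi V''$, let $E_i$ denote the set of edges of $\Gamma'$ that $\Phi$ contracts over $i\in V''$; then the fiber $\Gamma_i=(F_i,V_i)$ has $V_i=\inv\phi(i)$, $F_i=\inv{g'}(V_i)$, and its edges are exactly $E_i$, as one reads off from the explicit formula for the fiber involution. Each $\Xi_i:\Gamma_i\to\Lambda_i$ is in turn encoded by an order-preserving surjection $\phi_i:V_i\epi W_i$ onto the vertex set $W_i$ of $\Lambda_i$, together with, for $w\in W_i$, a set $D_w$ of edges lying over $\inv{\phi_i}(w)$; since the edges of $\Gamma_i$ are precisely $E_i$, the full contracted set $D_i:=\bigcup_{w\in W_i}D_w$ is contained in $E_i$.

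I would then build the middle graph and the two factors by hand. Let the vertex set of $\Gamma$ be $W:=\bigsqcup_{i\in V''}W_i$, ordered lexicographically by the order of $V''$ on the blocks and by the order of each $W_i$ within a block, and let $\alpha:V'\epi W$ be the order-preserving surjection restricting to $\phi_i$ on each $V_i$. Let $a:\Gamma'\to\Gamma$ be the pure contraction produced by Lemma~\ref{contractions} from $\alpha$ and the edge-sets $\{D_w\}_{w\in W}$. Next let $\beta:W\epi V''$ collapse each block $W_i$ to $i$, and let $b:\Gamma\to\Gamma''$ be the pure contraction given by Lemma~\ref{contractions} from $\beta$ together with the edge-sets $E_i\setminus D_i$; these are genuine edges of $\Gamma$ because $a$ removes only the flags of $\bigcup_iD_i$, so every edge of $E_i\setminus D_i$ survives into $\Gamma$.

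The three required properties I would verify by appeal to the uniqueness clause of Lemma~\ref{contractions}. The composite $b\circ a$ is again a pure contraction; its vertex map is $\beta\circ\alpha=\phi$ and the edges it contracts over $i$ are $D_i\cup(E_i\setminus D_i)=E_i$, so $b\circ a=\Phi$. Computing the fiber of $b$ over $i$ gives a graph with vertex set $\inv\beta(i)=W_i$ and flag set $F_i\setminus D_i$, which is exactly $\Lambda_i$; and the induced map $a_i:\Gamma_i\to\inv{b}(i)=\Lambda_i$ is, on vertices, $\alpha|_{V_i}=\phi_i$ and contracts precisely $D_i$, whence $a_i=\Xi_i$.

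For uniqueness, suppose $\Phi=b'\circ a'$ is a second factorization into pure contractions with $a'_i=\Xi_i$. Then $\inv{b'}(i)=\Lambda_i$ forces the vertex set of the middle object to be $\bigsqcup_iW_i$, and the fact that $\phi_{b'}$ is an order-preserving surjection with these fibers forces its order to be the lexicographic one, so the middle object is $\Gamma$. Reading the vertex map and contracted edge-sets of $a'$ off the identities $a'_i=\Xi_i$ exactly as above gives $a'=a$ by Lemma~\ref{contractions}, after which $b'=b$ since both are pure contractions $\Gamma\to\Gamma''$ sharing the vertex map $\beta$ and the contracted edge-sets $E_i\setminus D_i$. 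I expect the only genuine subtlety to be the identification of the operadically induced fiber map $a_i$ with the concrete $\Xi_i$: this reduces to checking that the vertex maps, the flag inclusions, and the induced involutions coincide, which follows from the explicit formula for the involution on a fiber; the accompanying bookkeeping of the orders on $W$ is routine but must be tracked so that the factorization, and not merely its underlying combinatorial data, is seen to be unique.
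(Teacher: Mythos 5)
Your proposal is correct and is essentially the paper's own proof: both encode $\Phi$ and the $\Xi_i$ via Lemma~\ref{contractions} as order-preserving vertex surjections together with sets of contracted edges, form the middle graph $\Gamma$ on the ordinal sum of the vertex sets of the $\Lambda_i$ with vertex map $\bigcup_i \phi_i$, build $a$ by Lemma~\ref{contractions} from the contracted-edge data of the $\Xi_i$, and obtain $b$ as the further pure contraction collapsing the remaining edges $E_i\setminus D_i$. The only difference is one of detail: you make explicit the verifications ($b\circ a=\Phi$, $\inv{b}(i)=\Lambda_i$, $a_i=\Xi_i$) and the two-step uniqueness argument via the uniqueness clause of Lemma~\ref{contractions}, which the paper compresses into ``it is easy to check'' and ``the uniqueness of the construction is clear.''
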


\begin{proof} 
Assume that the pure contraction $\Phi$ is given, as in Lemma
\ref{contractions}, by an order-preserving map $\phi : V \to V'$ and subsets $E_i$, $i\in V'$, of
edges. 
Suppose also that the pure contractions  $\Xi_i$ are given by
order-preserving maps
$\phi_i : \inv \phi (i) \epi C_i$, $i \in V'$, and 
subsets $E_{ij} \subset E_i$ of 
edges of $\Xi_i^{-1}(j)$ for each $j \in C_i$. We then use 
Lemma~\ref{contractions} again to build $\Lambda$ with the set of
vertices~$C$, and a pure contraction $a$
as follows. As $C$ we take
the ordinal sum 
$\bigcup_{i \in V'_i} C_i$ and
\begin{equation}
\label{vcera_do_mne_zase_sili}
\phi_a  :=  \bigcup_{i \in V'_i} \phi_i    : V =  \bigcup_{i \in V'_i}
\phi^{-1}(i) \longrightarrow C.
\end{equation} 
The pure contraction $a$ is then determined by $\phi_a : V \to C$ and
the subsets of edges $E_{ij}$,
$j \in C_i$, $i \in V'$.
It is  easy to
check that $\Gamma'$ is a result of a further pure contraction~$b$ associated to the subsets of edges $E_i\setminus \bigcup_{j}E_{ij}.$  
The uniqueness of the construction is clear.
\end{proof}

Another version of the weak blow-up condition is described in

\begin{lemma}\label{bupforisos}
Let $\Phi:\Gamma= (F,g,V) \to \Gamma' =  (F',g',V')$ be a pure
contraction with fibers $\Gamma_i$, $i \in V'$. Given  local
isomorphisms $\Xi_i:\Gamma_i\to \Lambda_i$ for each $i\in V'$,
there exists a unique factorization of $\Phi$ as
in~(\ref{po_dlouhe_dobe_opet_pisu})
in which $a$ is a local isomorphism inducing 
the prescribed maps $\Xi_i$ on the fibers, and $b$ a pure~contraction.
\end{lemma}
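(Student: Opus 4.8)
The plan is to mirror the proof of Lemma~\ref{bupforcontractions}, replacing the pure contractions on the fibers by the local isomorphisms $\Xi_i$ and exploiting that, by Definition~\ref{pisu_jednou_rukou}, a local isomorphism is a vertex \emph{bijection} together with order-preserving identifications of the local flag sets that contracts no edges. First I would record the data of $\Phi$. By Lemma~\ref{contractions} the pure contraction $\Phi=(\psi,\phi)$ is determined by the order-preserving surjection $\phi:V'\epi V''$ and, for each $i\in V''$, a set $E_i$ of edges internal to the fiber $\Gamma_i$; these $E_i$ are exactly the nontrivial orbits of the involution of $\Gamma_i$. Writing $\Xi_i=(\psi_i,\phi_i)$ with $\phi_i:V_i\to W_i$ the vertex bijection (where $V_i:=\phi^{-1}(i)$ and $W_i$ are the vertices of $\Lambda_i$) and $\psi_i$ the flag part, I would note that $\Xi_i$ carries $E_i$ to the edge set $E_i^\Lambda:=\psi_i^{-1}(E_i)$ of $\Lambda_i$, since $\Xi_i$ is equivariant and contracts nothing.

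Next I would assemble the middle graph. Put $V:=\bigsqcup_{i\in V''}W_i$, ordered as the ordinal sum following $V''$, and place over each $i$ the graph $\Lambda_i$; equivalently, transport the flags and the involution of $\Gamma'$ through $\psi_a:=\bigsqcup_i\psi_i$. Setting $\phi_a:=\bigsqcup_i\phi_i$, the pair $a:=(\psi_a,\phi_a):\Gamma'\to\Gamma$ is a vertex bijection which restricts to the order-preserving isomorphism $\psi_i$ on each local flag set, hence a local isomorphism whose induced map on the fiber over $i$ is, by construction, exactly $\Xi_i$. The residual map $b:\Gamma\to\Gamma''$ is then the contraction of the edges $E_i^\Lambda$ inside each fiber $W_i$, furnished by Lemma~\ref{contractions}; because contracting $E_i^\Lambda$ in $\Lambda_i$ undoes, along $\Xi_i$, the contraction of $E_i$ in $\Gamma_i$, one obtains $b\circ a=\Phi$, giving the factorization~(\ref{po_dlouhe_dobe_opet_pisu}).

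The main point to verify — and where I expect the real work to lie — is that $b$ is a \emph{pure} contraction onto the \emph{given} $\Gamma''$: that its flag map $\psi_b$ is order-preserving and that its target is $\Gamma''$ on the nose rather than merely up to a local reordering. This reduces to checking that the ordinal-sum order on $V$, together with the local orders inherited from the $\Lambda_i$, is carried by $\psi_a$ to the order of $\Gamma''$; the order-preservation of each $\Xi_i$ on the local flag sets is precisely what should neutralize the possibly non-order-preserving vertex bijection $\phi_a$ at the level of $b$, so that $\psi_a$ restricted to the uncontracted flags recovers the inclusion $\psi$. Uniqueness of the factorization is then immediate once this is in place: $a$ is forced to be the fiberwise assembly of the $\Xi_i$, and $b$ is pinned down by the uniqueness clause of Lemma~\ref{contractions} for pure contractions with prescribed fibers, in the same spirit as the rigidity used in Corollary~\ref{Zitra_ma_byt_-18_stupnu.}.
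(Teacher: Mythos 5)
Your construction is precisely the paper's own proof: the authors likewise build $\Gamma$ with vertex set the ordinal sum of the vertex sets of the $\Lambda_i$ and flag set the ordinal sum of their flag sets, take $\psi_a$ to be the flag bijection assembled from the $\Xi_i$ (transporting the involution of $\Gamma'$ along it), set $a=(\psi_a,\phi_a)$ with $\phi_a$ as in~(\ref{vcera_do_mne_zase_sili}), and conclude that $\Gamma''$ is recovered by a further pure contraction with the factorization unique, exactly as in Lemma~\ref{contractions} and Lemma~\ref{bupforcontractions}. The step you isolate as ``the real work'' --- that $b$ is pure with target the given $\Gamma''$ on the nose --- is the very point the authors dispatch with ``it is easy to check as in the proof of Lemma~\ref{bupforcontractions},'' so your proposal matches the paper's argument in both route and level of detail.
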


\begin{proof} 
Let $\Lambda_i = (G_i,C_i)$. 
We construct $\Lambda$ 
as the graph whose set of vertices $C$ equals
the ordinal sum  $\bigcup_{i \in V'_i} C_i$, and the set $F''$ of 
flags the ordinal sum $\bigcup_{i \in V'_i} G_i$. There is an
obvious isomorphism $\psi_a$ between the set $F''$ of flags of $\Lambda$ and the
set  $F$ of flags of $\Gamma$ induced by the local isomorphism between the
fibers. We transport the involution of $\Gamma$ to the flags of
$\Lambda$ along this isomorphism. Then $a := (\psi_a,\phi_a)$ 
with $\phi_a$ as in~(\ref{vcera_do_mne_zase_sili}) is the requisite
local isomorphism. It is easy to
check 
that $\Gamma'$ is a result of a  pure contraction of $\Lambda$  for which 
the contraction data consist of the order preserving map $\phi_b: \bigcup_{i \in V'_i} C_i\to V', \ \phi_b(c) = i \ \text{if} \ c\in C_i$ together with the  collection of subsets $\{\psi_a^{-1}(E_i), i\in V'\}$ where $\{E_i, i \in V'\}$ is the collection of the  contraction subsets for $\Phi.$  The uniqueness of factorization  is clear again. \end{proof}

The last version of the weak blow-up axiom which we will need is 

\begin{lemma}
\label{local}
Let $\Phi:\Gamma= (F,g,V) \to \Gamma' =  (F',g',V')$ be a morphism between preordered graphs with fibers $\Gamma_i$, $i \in V'$. 
Given  local
reorderings $\Xi_i:\Gamma_i\to \Lambda_i$ for each $i\in V'$,
there exists a unique factorization of $\Phi$ as
in~(\ref{po_dlouhe_dobe_opet_pisu})
in which $a$ is a local reordering that induces
the prescribed maps on the fibers.
\end{lemma}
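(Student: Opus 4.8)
The plan is to mimic the constructions in the proofs of Lemmas~\ref{bupforcontractions} and~\ref{bupforisos}, while exploiting a special feature of local reorderings that makes the second factor come for free. First I would record that, by Definition~\ref{pisu_jednou_rukou}, a local reordering $a=(\psi_a,\phi_a)$ has $\phi_a=\id$ and $\psi_a$ an isomorphism. Because $\psi_a$ is then a bijection on flags, hence on edges, the edge condition in~\eqref{graphsmorphism} holds automatically for the pair $(\psi_a^{-1},\id)$, which is therefore again a local reordering and is inverse to $a$; thus \emph{every local reordering is an isomorphism in $\ttprGr$}. Consequently, as soon as $\Gamma$ and the local reordering $a:\Gamma'\to\Gamma$ are pinned down, the remaining arrow must be $b:=\Phi\circ a^{-1}$, which is a morphism of preordered graphs as a composite of such. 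Moreover $\phi_a=\id$ forces the vertex set of $\Gamma$ to be $V'$ with its given order and $\phi_b=\phi_\Phi$, so all the freedom resides in the local orders of the flags.

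Next I would construct $\Gamma$ explicitly by transporting local orders. Writing $\phi:=\phi_\Phi$, each vertex $v\in V'$ lies over a unique $i=\phi(v)\in V''$ and is a vertex of the fiber $\Gamma_i=\Phi^{-1}(i)$, whose flags at $v$ are exactly $(g')^{-1}(v)$. The local reordering $\Xi_i=(\psi_{\Xi_i},\id):\Gamma_i\to\Lambda_i$ supplies, at each such $v$, a bijection of the flags at $v$ carrying the local order of $\Lambda_i$ to that of $\Gamma_i$. I define $\Gamma$ to have vertex set $V'$, the same underlying map $g':F'\to V'$ and the same involution $\sigma'$ as $\Gamma'$, but with the local order at each $v$ replaced by the one pulled back from $\Lambda_{\phi(v)}$ along $\Xi_{\phi(v)}$. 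The lexicographic order then makes $\Gamma$ a genuine preordered graph, and $a:=(\psi_a,\id)$, with $\psi_a:F_\Gamma\to F'$ assembled vertex by vertex from the bijections above, is by construction a local reordering; it is equivariant and bijective on legs because each $\Xi_i$ is.

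Finally I would set $b:=\Phi\circ a^{-1}$ and check that it realizes the factorization~\eqref{po_dlouhe_dobe_opet_pisu}. Since $\phi_b=\phi$ and, by the explicit formula for fibers stated after~\eqref{graphsmorphism}, the fiber $b^{-1}(i)$ has vertex set $\phi^{-1}(i)$ and flag set $g_\Gamma^{-1}(\phi^{-1}(i))$ carrying precisely the local orders and involution transported from $\Lambda_i$, one reads off $b^{-1}(i)=\Lambda_i$ and that the induced map $a_i:\Gamma_i\to\Lambda_i$ coincides with $\Xi_i$. Uniqueness is immediate: a local reordering $a$ inducing the prescribed $\Xi_i$ must carry exactly the local orders dictated by the $\Lambda_i$ at every vertex, so it determines $\Gamma$ and $a$, after which $b=\Phi\circ a^{-1}$ is forced. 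I expect the only delicate point to be the bookkeeping that the fiber involution of $b^{-1}(i)$, computed by the case distinction ``$h\in\Im(\psi_b)$ versus $h\notin\Im(\psi_b)$'' in the definition of the fiber, agrees with the involution of $\Lambda_i$; this holds because $a$ is an isomorphism preserving involutions, hence commutes with the formation of fibers and merely relabels the flags of $\Phi^{-1}(i)$ according to $\Xi_i$.
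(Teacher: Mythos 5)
Your proposal is correct and follows essentially the same route as the paper's own proof: since each vertex of $\Gamma'$ lies in a unique fiber of $\Phi$, you build $\Gamma$ by keeping the vertices of $\Gamma'$ and transporting the local orders from the $\Lambda_i$, take $a$ to be the resulting local reordering, and use the fact that $a$ is an isomorphism to force $b=\Phi\circ a^{-1}$ and the uniqueness. Your write-up merely makes explicit the verifications (invertibility of local reorderings, agreement of fiber involutions) that the paper leaves implicit.
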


Notice that we did not require $\Phi$ to be a
pure contraction. When $\Phi$ is a pure contraction,  $b : \Lambda \to
\Gamma'$ need not be pure, but it is a contraction. 

\begin{proof}[Proof of Lemma \ref{local}]
Each vertex $j$ of $\Gamma$
belongs to a unique fiber of $\Phi$. So the prescribed reorderings of
the fibers determine a reordering at each vertex of  $\Gamma$.  
We thus  construct $\Lambda$ as the graph with the same
vertices as $\Gamma$ but with the local orders modified according to
the above reorderings.
The map $a: \Gamma \to \Lambda$ 
is then the  related  local reordering map. Since it is an
isomorphism, it determines the map $b : \Lambda\to \Gamma'$ uniquely. 
\end{proof}

\begin{proposition}
\label{fact} 
Any morphism  $\Phi$  of preordered graphs
\begin{equation*}
\xymatrix@C=3.5em{F \ar[d]_{g}  & F'\ar@{_{(}->}[l]_{\psi}
\ar[d]^{g'}
\\
V \ar@{->>}[r]^{\phi} &V'
}
\end{equation*}
 can be
  factorized as a local isomorphism followed by a  pure 
  contraction followed by a local reordering. Symbolically
\begin{equation}
\label{mam_prohlidky}
\Phi = {\it Reo}\kompozice {\it Cont}\kompozice {\it Li}.
\end{equation}
\end{proposition}

\begin{proof}
We first factorize $\phi$ as a bijection $\pi:V\to V''$ followed by an order-preserving map $\xi: V'' \to V'$ such that $\pi$ restricts to an order-preserving
isomorphism $\phi^{-1}(i) \cong \xi^{-1}(i)$ for each $i\in V'$, 
cf.~the bottom row of
\begin{equation}
\label{vcera_s_Jarkou_na_Korabe}
\xymatrix@C=4em{F \ar[r]^\eta_\cong  \ar[d]_{g}     &  \ar[d]_{g''} F'' 
& F' \ar@{^{(}->}[l]_{\eta \kompozice \psi}  \ar[d]_{g'} 
 \ar@/_2em/[ll]_{\psi} 
\\
\ar@{->>}@/_2em/[rr]^{\phi} V \ar[r]^\pi_\cong&  \ar@{->>}[r]^{\xi} V'' &\ V'\,.  
}
\end{equation}
We then factorize  $\pi \kompozice g$ into the composite
 $F\stackrel{\eta}{\longrightarrow} F''\stackrel{g''}{\longrightarrow} V''$ where 
$\eta$ induces an order-preserving isomorphism $(\pi g)^{-1}(j)
\cong (g'')^{-1}(j)$ for each $j \in V''$, cf.~the left square
in~(\ref{vcera_s_Jarkou_na_Korabe}). 
We induce an involution on $F''$ from $F$ via the isomorphism
$\eta$. The pair $(\eta^{-1},\pi)$ is the required local isomorphism $ {\it Li}$
in~(\ref{mam_prohlidky}).

The pair $( \eta \kompozice\psi, \xi)$ in the right  square
of ~(\ref{vcera_s_Jarkou_na_Korabe}) is a morphism of graphs as well. We
factorize $\eta \kompozice \psi$ as a bijection $\mu:F'\to F'''$ followed
by an order-preserving monomorphism $\lambda:F'''\to F''$ as in
\[
\xymatrix@C=3em{F'' \ar[d]_{g''} & \ar@{_{(}->}[l]_\lambda    
F''' \ar[dr]^(.6){g'''}    & 
 \ar[l]_\mu^\cong F'\ar@/_2em/[ll]_{\eta \kompozice \psi}
  \ar[d]^{g'}  
\\
V'' \ar@{->>}[rr]^\xi  && \ V'\,.
}
\]
We finally define $g''':F'''\to V'$ as $\xi\kompozice g''\kompozice
\lambda$.
Since  $\xi\kompozice g''\kompozice \lambda \kompozice \mu = g'$, the diagram
\[
\xymatrix@C=3.5em{F''' \ar[d]_{g'''}  & F'\ar[l]^\cong_{\mu}  \ar[d]^{g'}
\\
V' \ar[r]^{\id} & V'
}
\]   
commutes. It is a reordering morphism playing the r\^ole of
${\it Reo}$ in~(\ref{mam_prohlidky}). The pair $(\lambda, \xi)$, which
is clearly a pure contraction, is ${\it Cont}$ in~(\ref{mam_prohlidky}).
\end{proof}

\begin{corollary} 
Any isomorphism of preordered graphs can be factorized into a local
isomorphism followed by a local reordering, symbolically
${\it Iso} = {\it Reo}\kompozice \, {\it Li}$.
\end{corollary}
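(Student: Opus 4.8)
The plan is to obtain the corollary as a degenerate case of Proposition~\ref{fact}, by arguing that for an isomorphism the pure contraction in the factorization~(\ref{mam_prohlidky}) collapses to an identity. First I would apply Proposition~\ref{fact} to the given isomorphism $\Phi$ of preodered graphs, producing $\Phi = {\it Reo}\circ {\it Cont}\circ {\it Li}$ with ${\it Li}$ a local isomorphism, ${\it Cont}$ a pure contraction, and ${\it Reo}$ a reordering. The goal is then to eliminate ${\it Cont}$.

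Next I would record that both ${\it Li}$ and ${\it Reo}$ are already isomorphisms in $\ttprGr$. Indeed, a reordering morphism has $\phi = \id$ and $\psi$ an isomorphism by Definition~\ref{pisu_jednou_rukou}, hence is invertible; and a local isomorphism has $\phi$ a bijection while $\psi$ restricts to an order-preserving bijection on each fiber, so $\psi$ is a global bijection of flags compatible with the involutions, and its inverse is again a morphism of graphs. Consequently ${\it Cont} = {\it Reo}^{-1}\circ \Phi \circ {\it Li}^{-1}$ is a composite of isomorphisms, and is therefore itself an isomorphism.

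The only real content is that a pure contraction which happens to be an isomorphism must be the identity. Writing ${\it Cont} = (\psi_0,\phi_0)$ as in Lemma~\ref{contractions} and Definition~\ref{Ron_v_Praze}, the vertex component $\phi_0$ is an order-preserving surjection and the flag component $\psi_0$ an order-preserving injection. Since ${\it Cont}$ is invertible, $\phi_0$ is in fact an order-preserving bijection between objects of the skeletal category $\Fin$, hence the identity; likewise $\psi_0$ is an order-preserving bijection of flags, again the identity. Thus ${\it Cont}$ is the identity morphism --- conceptually, an isomorphism of graphs contracts no edges, because its flag map is surjective and so no edge is collapsed. Substituting back gives $\Phi = {\it Reo}\circ {\it Li}$, which is the desired factorization ${\it Iso} = {\it Reo}\circ {\it Li}$.

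There is no serious obstacle here; the proof is essentially bookkeeping. The point at which some care is needed is the appeal to skeletality: it is precisely because $\Fin$ is skeletal that an order-preserving bijection of finite ordinals is forced to be an identity rather than merely an order isomorphism, and this is what makes ${\it Cont}$ vanish on the nose. An equally short alternative would bypass the abstract argument and instead inspect the construction inside the proof of Proposition~\ref{fact}: when $\phi$ and $\psi$ are bijections, the order-preserving maps $\xi$ and $\lambda$ built there are bijective, hence identities, so ${\it Cont}$ drops out directly.
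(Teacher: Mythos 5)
Your argument is correct and is essentially the paper's own proof: the paper likewise deduces the corollary from Proposition~\ref{fact} together with the observation that the only pure contractions that are isomorphisms are the identity maps. You have merely spelled out that observation (via invertibility of ${\it Li}$ and ${\it Reo}$ and the skeletality of $\Fin$ forcing an order-preserving bijection to be the identity), which the paper leaves as ``obvious.''
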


\begin{proof}
The statement follows from Proposition~\ref{fact} combined with the obvious
fact that the only pure contractions that are isomorphisms are the
identity maps.   
\end{proof}

\begin{corollary}  
Any morphism $\Phi = (\psi,\phi)$ such that $\phi: V' \to V''$ is
order preserving is a~composite of a pure contraction
followed by a local reordering. 
\end{corollary}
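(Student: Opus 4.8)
The plan is to obtain this as a direct refinement of Proposition~\ref{fact}: I will show that the hypothesis that $\phi$ is order-preserving permits the local-isomorphism factor ${\it Li}$ in the three-step factorization~(\ref{mam_prohlidky}) to be taken trivial, so that $\Phi$ collapses to ${\it Reo}\circ{\it Cont}$, a pure contraction followed by a local reordering.

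First I would return to the construction in the proof of Proposition~\ref{fact} and inspect where ${\it Li}$ enters. It is assembled from two freely chosen factorizations: the splitting $\phi=\xi\circ\pi$ of the vertex map into a bijection $\pi:V'\to V$ and an order-preserving $\xi:V\to V''$, with $\pi$ restricting to order-preserving isomorphisms $\phi^{-1}(i)\cong\xi^{-1}(i)$; and the splitting $\pi\circ g'=g\circ\eta$ of the flag map through order-preserving isomorphisms on $g$-fibers. Since $\phi$ is assumed order-preserving, the choice $\pi:=\id_{V'}$, $V:=V'$, $\xi:=\phi$ already satisfies the required properties, the fiber isomorphisms being identities. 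With $\pi=\id$ the map to be refactorized is $\pi\circ g'=g'$ itself, so I may likewise take $\eta:=\id_{F'}$, $F:=F'$, $g:=g'$, the fiber condition on $\eta$ holding trivially. Hence ${\it Li}=(\eta^{-1},\pi)$ is the identity of $\Gamma'$, and the intermediate graph is $\Gamma'$ itself.

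It then remains to run the tail of the proof of Proposition~\ref{fact} verbatim on the morphism $(\eta\circ\psi,\xi)=(\psi,\phi):\Gamma'\to\Gamma''$: split the flag map $\psi$ as a bijection $\mu:F''\to F'''$ followed by an order-preserving monomorphism $\lambda:F'''\to F'$, obtaining the pure contraction ${\it Cont}=(\lambda,\phi)$ and then the local reordering ${\it Reo}=(\mu,\id)$; their composite has vertex map $\phi$ and flag map $\lambda\circ\mu=\psi$, hence equals $\Phi$. The only point that needs care, and the sole place where anything could go wrong, is to confirm that ${\it Cont}$ is a genuine pure contraction in the sense of Definition~\ref{Ron_v_Praze}, with $\lambda$ equivariant and obeying the edge condition. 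This is immediate: $\lambda$ is order-preserving by construction and, since $\mu$ is an isomorphism and $\lambda\circ\mu=\psi$ is equivariant, $\lambda$ is equivariant on its image $F'''$; the remaining morphism and contraction requirements are precisely those already discharged in Proposition~\ref{fact}. Thus no verification beyond that proposition is required, and the corollary follows.
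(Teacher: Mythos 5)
Your proposal is correct and follows essentially the same route as the paper: the paper's proof likewise observes that when $\phi$ is order-preserving the decomposition in the bottom row of~(\ref{vcera_s_Jarkou_na_Korabe}) forces $\pi=\id$ and hence $\eta=\id$, so that ${\it Li}$ in~(\ref{mam_prohlidky}) is the identity. The only cosmetic difference is that you \emph{choose} $\pi=\id$ while the paper notes this choice is forced by the specification in Proposition~\ref{fact}; both yield the same factorization ${\it Reo}\circ{\it Cont}$.
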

   
\begin{proof}
Another consequence of Proposition~\ref{fact}.
Notice that the decomposition of $\phi$ in the bottom row
of~(\ref{vcera_s_Jarkou_na_Korabe}) was specified so that if $\phi$
is order preserving, $\pi$ must be 
the identity, thus $\eta = \id$ as well, so 
${\it  Li}$ in~(\ref{mam_prohlidky}) is the identity morphism.
\end{proof}

For each natural number $n \geq 0$, let $1_n$ (the {\em corolla\/}) be the graph
$\bar{n}\to \bar{1}$ with the trivial involution. The corollas are
not  local terminal objects in $\ttprGr$ since there are exactly
$n!$ morphisms from any graph $\Gamma$ with $n$ legs to $1_n.$ Any
such a morphism is completely determined by a linear order of the legs
of $\Gamma.$

\begin{definition}
\label{Radeji_bych_sel_s_Jaruskou_na_vyhlidku_sam.}
The category of {\em ordered graphs $\Gr$\/} is the coproduct of
the categories ${\ttprGr}/1_n$ for $n\ge 0$. \end{definition} 

\begin{lemma} The category of ordered $\Gr$ is equipped with  an operadic category structure.
\end{lemma}

\begin{proof}
We describe the main ingredients of the operadic category $\Gr$ and
leave the verification of the axioms as an exercise. Since objects
and morphisms of $\Gr$ are defined using  finite
sets and their morphism as building blocks, such a verification reduces to the
properties of the operadic category $\Fin.$

The cardinality functor assigns to a
graph the (linearly ordered) set of its vertices. Because of this we will often identify a vertex $i$ of a graph $\Gamma$ with its image in the ordinal $|\Gamma|.$

A~morphism  $\Phi : \Gamma \to \Gamma'$ of ordered graphs, i.e.\ a diagram
\begin{equation}
\label{Ruda_je_rasista.}
\xymatrix@R=.3em@C=2.5em{
F  \ar[dddd]_{g} && \ar@{_{(}->}[ll]_\psi \ar[dddd]^{g'} F'
\\
&\bar{n} \ar@{^{(}->}[lu] \ar[dd]  \ar@{_{(}->}[ru] &
\\
{}
\\
&\bar{1}  &
\\
V \ar@{->>}[ru] \ar[rr]^\phi &&V'\ar@{->>}[lu]
}
\end{equation}
induces for each $i \in V'$  a commutative diagram
\begin{equation}
\label{Jana_taky.}
\xymatrix@C=3.5em{(\phi g)^{-1}(i) \ar[d]_{g}  
& 
(g')^{-1}(i)\ar@{_{(}->}[l]_(.4){\psi}  \ar[d]^{g'}
\\
\phi^{-1}(i) \ar[r]^{\phi} & \bar{1}
}
\end{equation}
in $\Fin$ in which
the morphisms $g,\phi,g'$ and $\psi$ are the restrictions of the 
corresponding morphisms from~(\ref{Ruda_je_rasista.}). 
We interpret the right vertical morphism as a corolla by imposing the
trivial involution on $(g')^{-1}(i)$. Due to the definition of
fibers of maps of preordered graphs, the diagram above represents a map 
of the fiber  of $\Phi$ over $i$ to a corolla, which makes it an
ordered graph.
We take it as the definition of the {\em fiber in $\Gr$\/}. 
In other words, 
the fiber gets a linear order on its legs from the ordinal $(g')^{-1}(i)$.
Finally, the chosen local terminal objects in $\Gr$ are  $c_n =
\id:1_n\to 1_n$, that is corollas whose global order of legs 
coincides with the local order at this unique vertex.
\end{proof}

It follows from the commutativity of the upper triangle
in~(\ref{Ruda_je_rasista.}) that the map $\psi$ preserves the global
orders of legs,
therefore morphisms of ordered graphs induce
order-preserving bijections of the legs of graphs.
Thus the category $\Gr_{\tt ord}$ then consists of morphisms
(\ref{Ruda_je_rasista.}) 
in which, moreover, $\phi$ is order preserving, that is, the order of
vertices is preserved.

A \qb\ $\Phi : \Gamma \to \Gamma'$ in $\Gr$ 
is a morphism~(\ref{Ruda_je_rasista.}) each of whose 
fibers is the chosen local terminal object $c_n$ for some $n \geq 0$. 
It is clear that in this case both $\phi$ and $\psi$ must be bijections
and, moreover, the local orders on $(g')^{-1}(i) $ and $(\phi g)^{-1}(i)$
coincide for each $i \in V'$. 
In other words, \qb{s} are local isomorphisms over
$1_n$. So  $\Gamma'$ as an ordered graph is obtained from
$\Gamma$ by reordering its vertices. We thus have:

\begin{lemma} 
\label{Pozitri kontrola u Vondry.}
All quasibijections in $\Gr$ are invertible.
\end{lemma}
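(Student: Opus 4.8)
The plan is to lean on the explicit description of quasibijections that was just extracted in the paragraph preceding the statement, and then to write down a two-sided inverse by hand. Concretely, I would use that a \qb\ $\Phi=(\psi,\phi)$ is nothing but a local isomorphism over $1_n$ in the sense of Definition~\ref{pisu_jednou_rukou}, and show that local isomorphisms are invertible with inverse again a \qb.

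First I would recall why both coordinate maps of a \qb\ are bijections. The vertex map $\phi$ is injective because each fiber $\inv{\Phi}(i)$, having vertex set $\phi^{-1}(i)$, is a one-vertex corolla, and it is surjective by the very definition of a morphism of graphs, hence a bijection. The flag map $\psi$ is injective by hypothesis, and it is surjective because any flag $h\notin\Im(\psi)$ would satisfy $\tau(h)=\sigma'(h)\ne h$ (such an $h$ cannot be a leg, since $\psi$ is bijective on legs) and would therefore produce a nontrivial edge inside a fiber, contradicting that every fiber is a corolla. Moreover, the identification of each fiber with the \emph{chosen} local terminal $c_n$ means precisely that $\psi$ respects the local orders fiberwise and the global leg order. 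Next I would exhibit the candidate inverse $\Phi^{-1}:=(\psi^{-1},\phi^{-1})$, with $\psi^{-1}:F'\to F''$ and $\phi^{-1}:V''\to V'$, and verify it is a morphism $\Gamma''\to\Gamma'$ of ordered graphs in the sense of~(\ref{Ruda_je_rasista.}). Commutativity of its defining square is formal: composing the relation $\phi\circ g'\circ\psi=g''$ on the right with $\psi^{-1}$ and on the left with $\phi^{-1}$ gives $g'=\phi^{-1}\circ g''\circ\psi^{-1}$. The map $\phi^{-1}$ is a surjection (being a bijection); $\psi^{-1}$ is an equivariant injection, bijective on legs and order-preserving on the global leg order, each of these being the corresponding condition on $\Phi$ read backwards through the bijections $\psi,\phi$. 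The edge-lifting condition is automatic, since $\psi$ is a global bijection commuting with the involutions and hence carries the edges of $\Gamma''$ bijectively onto those of $\Gamma'$ with matching endpoints. It then follows from $\psi^{-1}\psi=\id$, $\psi\psi^{-1}=\id$ and the analogous identities for $\phi$ that $\Phi\circ\Phi^{-1}$ and $\Phi^{-1}\circ\Phi$ are the identity morphisms.

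I do not anticipate a genuine obstacle: all the content lives in the characterization of \qb{s} as local reorderings of the vertices, which is already carried out before the statement, after which invertibility is purely formal. The only point requiring a moment's care is confirming that $\Phi^{-1}$ is order-preserving on legs and compatible with the local orders at the vertices; but this is exactly what is guaranteed by the matching of local orders built into the definition of a \qb, so $\Phi^{-1}$ is itself a \qb. Hence every quasibijection in $\Gr$ is invertible.
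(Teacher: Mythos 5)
Your proposal is correct and follows essentially the same route as the paper: the paper gives no separate proof, treating the lemma as an immediate consequence of the preceding observation that a \qb\ in $\Gr$ forces both $\phi$ and $\psi$ to be bijections with matching local orders, i.e.\ is a local isomorphism over $1_n$, so that $\Gamma''$ is just $\Gamma'$ with reordered vertices. You merely make explicit the two-sided inverse $(\psi^{-1},\phi^{-1})$ and the verification that it is again a morphism of ordered graphs, which is the detail the paper leaves implicit.
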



\begin{lemma} 
\label{Clemens zitra odleta.}
The operadic category $\Gr$ is factorizable.
\end{lemma}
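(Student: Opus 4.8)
The plan is to exhibit, for an arbitrary morphism $\Phi = (\psi,\phi)$ of ordered graphs, an explicit factorization $\Phi = \phi \circ \sigma$ with $\sigma \in \QO$ and the second factor in $\DO$. The natural starting point is Proposition~\ref{fact}, which already factorizes any morphism in $\ttprGr$ as ${\it Reo}\circ {\it Cont}\circ {\it Li}$, a local isomorphism followed by a pure contraction followed by a local reordering. Since we work in $\Gr = \coprod_n \ttprGr/1_n$, I would first check that this factorization lives over a fixed corolla $1_n$, which it does because all the maps in the proof of Proposition~\ref{fact} preserve the legs and hence commute with the structure maps to $1_n$.

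The key observation is how the three factors in~(\ref{mam_prohlidky}) interact with $\QO$ and $\DO$. On the one hand, both a local isomorphism ${\it Li}$ and a local reordering ${\it Reo}$ have all fibers equal to chosen local terminal objects $c_n$: the local reordering has $\phi = \id$ and merely permutes the local orders, while the local isomorphism has $\phi$ a bijection restricting to order-preserving isomorphisms of fibers. By the characterization of \qb{s} in $\Gr$ recorded just before Lemma~\ref{Clemens zitra odleta.} (a morphism is a \qb\ exactly when each fiber is some $c_n$, equivalently it is a local isomorphism over $1_n$), both ${\it Li}$ and ${\it Reo}$ are \qb{s}. On the other hand, a pure contraction ${\it Cont} = (\lambda,\xi)$ has $\xi$ order-preserving, so it lies in $\DO$ by the description of ${\tt \Delta}\Gr$ as those morphisms for which $\phi$ is order-preserving.

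This leaves a mismatch of orders: I have $\Phi = {\it Reo}\circ {\it Cont}\circ {\it Li}$ with a \qb\ on the \emph{left} as well as the right, whereas factorizability asks for the shape $\DO \circ \QO$. The plan is to absorb the leftmost \qb\ into the contraction. Concretely, I would push ${\it Reo}$ past ${\it Cont}$: given the pure contraction ${\it Cont}$ and the local reordering ${\it Reo}$, I can use Lemma~\ref{local} (the blow-up-type statement for local reorderings, which requires no pure-contraction hypothesis on the target map) to refactor ${\it Reo}\circ{\it Cont}$ as a pure contraction followed by a local reordering, or more directly to produce a single contraction $C \in \DO$ together with a local reordering $\sigma' \in \QO$ on the right so that ${\it Reo}\circ{\it Cont} = C \circ \sigma'$. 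Then $\Phi = C \circ (\sigma' \circ {\it Li})$, and since $\QO$ is a subcategory the composite $\sigma := \sigma'\circ {\it Li}$ is again a \qb, giving $\Phi \in \DO \circ \QO$.

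The main obstacle is the commutation of ${\it Reo}$ with ${\it Cont}$, i.e.\ verifying that the reordering which must be applied \emph{after} the contraction can be pulled back to a reordering of the source compatible with the contraction. The point worth checking carefully is that reordering the vertices of the contracted graph $\Gamma'''$ lifts to a reordering of the vertices of the intermediate graph so that the contraction map stays order-preserving on the nose; this is exactly the content available from Lemma~\ref{local}, since a local reordering is an isomorphism and hence, once chosen on the target, determines the remaining factor uniquely. Everything else is the bookkeeping of tracking which of the three factors of Proposition~\ref{fact} lands in $\QO$ versus $\DO$, which the lemmas on local isomorphisms, local reorderings, and pure contractions make routine.
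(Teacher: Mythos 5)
Your proof contains a genuine error in the bookkeeping that drives the whole argument: you classify the local reordering ${\it Reo}$ as a \qb, and it is not. A \qb\ in $\Gr$ must have every fiber equal to the \emph{chosen} local terminal object $c_n$, i.e.\ the local order at each vertex must agree with the global order the fiber inherits from the target; the paper states explicitly that \qb{s} in $\Gr$ are exactly the local isomorphisms over $1_n$. The fibers of a nontrivial local reordering are corollas $c(\sigma)$ with $\sigma \neq \id$ --- local terminal, but not chosen. Equivalently: a local reordering has vertex map $\phi = \id$, hence lies in ${\tt \Delta}\Gr = \DO$, and by Corollary~\ref{zase_mam_nejaky_tik} the only morphisms in $\QO \cap \DO$ are identities, so a nontrivial local reordering cannot be a \qb. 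This misclassification manufactures the ``mismatch'' $\QO \circ \DO \circ \QO$ that you then set out to repair.

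The repair step both fails and is unnecessary. Pushing the target reordering through the contraction as ${\it Reo}\circ{\it Cont} = C \circ \sigma'$ with $C$ pure and $\sigma'$ a local reordering of the source is impossible in general: take $\Gamma$ with vertices $u < v$, one internal edge joining them, and legs $a$ at $u$, $b$ at $v$; any local reordering of $\Gamma$ leaves $a < b$ in the lexicographic flag order, so a pure contraction afterwards can only produce the local order $a < b$ at the merged vertex, never $b < a$. Lemma~\ref{local} does not supply this step either --- it extracts prescribed reorderings of the \emph{fibers} as a reordering at the \emph{source}, whereas a reordering at the target permutes the legs of a fiber (a change of its global order, not a local reordering of it), and the paper's remark following Lemma~\ref{local} already warns that the residual factor $b$ need not be pure. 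Moreover, even if the commutation worked, your final map $\sigma' \circ {\it Li}$ would be a local reordering composed with a local isomorphism, i.e.\ a general isomorphism, not a \qb. The correct observation --- and the paper's actual route --- is that membership in $\DO$ concerns only the vertex map, so ${\it Reo}$ (with $\phi = \id$) is already in $\DO$; hence ${\it Reo}\circ{\it Cont}$ is a single order-preserving morphism and Proposition~\ref{fact} gives $\Phi = ({\it Reo}\circ{\it Cont})\circ{\it Li} \in \DO \circ \QO$ on the nose. The only substantive remaining point, which your one-line aside about ``preserving legs'' glosses over, is to equip the intermediate preordered graph with a structure map to $1_n$: since all maps involved induce bijections on legs, there is a unique monomorphism $\bar n \hookrightarrow F$ into its set of flags making the flag diagram commute and inducing an isomorphism on legs, which makes the factorization live in $\Gr$ and makes ${\it Li}$ a genuine \qb, i.e.\ a local isomorphism over $1_n$.
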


\begin{proof} 
Given a morphism $\Gamma\to \Gamma'$ over $1_n$ we use 
Proposition~\ref{fact} to factorize it  as a local isomorphism $\Gamma
\to \Gamma''$ followed by a composite of a pure contraction and a local reordering. This last composite is an order-preserving morphism $\Gamma'' \to
\Gamma'$. We have a~commutative diagram 
\[
\xymatrix@C=3.5em{F'' \ar[d]_\cong  & F'\ar@{_{(}->}[l]_{} 
\\
F  & \bar{n}\ar@{_{(}->}[l]_{}  \ar@{_{(}->}[u] \ar@{_{(}-->}[lu]}
\]
of flags of the corresponding graphs. All maps in this
diagram induce isomorphisms of the sets of legs. 
Thus there is a unique monomorphism $\bar{n} \hookrightarrow F''$ which
makes the diagram commutative and which, moreover, induces an
isomorphism of the sets of legs.  
Therefore the factorization described 
above is the factorization over $1_n$ as required. 
\end{proof}

Lemma~\ref{Dnes_s_Clemensem_do_Matu} below involves a local reordering morphism
$\Upsilon = (\sigma,\id): \Gamma'' \to \Gamma'''$ of ordered graphs. 
Recall that such an
$\Upsilon$ induces the identity between the vertices of the graphs  $\Gamma''$
and $\Gamma'''$, i.e.\ their vertices are ``the same.''  The $\sigma$ part of this morphism amounts to a permutation of flags adjacent to   a vertex $i$ of $\Gamma'.$ 
This observation
is important for the formulation of:

\begin{lemma}
\label{Dnes_s_Clemensem_do_Matu}
Consider a commutative diagram 
\[
\xymatrix@C=1em@R=1em{
\Gamma \ar[rrrr]^f \ar[rd]_{\Phi = (\psi,\phi)}
&&&& \Gamma' \ar[llld]
\ar[ddll]
\\
&\Gamma''  \ar[rd]_{\Upsilon= (\sigma,\id)}&&&
\\
&&\Gamma'''&&
}
\]
of ordered graphs in which $ \Upsilon$
is a local reordering. Then the
fiber $\Phi^{-1}(i),  i \in |\Gamma''|,$ is obtained from the
fiber $(\Upsilon \Phi)^{-1}(\id(i))$ 
by changing the global order of its legs according to the permutation of flags  induced by  $\sigma$ at the  vertex $i.$

Analogously
 the map between the fibers 
induced by $f$ over $i\in
|\Gamma''|$  can be obtained from the map induced by $f$ over $\id(i)\in 
|\Gamma'''|$ by a permutation of orders of legs according to  the permutation $\sigma.$ \end{lemma}

\begin{proof}
Direct verification.
\end{proof}

\begin{lemma} 
\label{Prohlidkou jsem prosel.}
The operadic category $\Gr$ satisfies the weak blow-up axiom. 
\end{lemma}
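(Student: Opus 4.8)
The plan is to deduce the axiom from the factorization of Proposition~\ref{fact} together with its three special cases already established in Lemmas~\ref{bupforisos}, \ref{bupforcontractions} and~\ref{local}. Unwinding the definition, the axiom demands, for a morphism $\Phi : \Gamma' \to \Gamma''$ in $\DO$ with fibers $\Gamma_i := \inv{\Phi}(i)$ and an arbitrary family of morphisms $\pi_i : \Gamma_i \to F''_i$ in $\Gr$, a unique factorization $\Gamma' \stackrel{\omega}{\to} S'' \stackrel{f''}{\to} \Gamma''$ with $f'' \in \DO$, $\inv{f''}(i) = F''_i$, and induced maps $\omega_i = \pi_i$ on the fibers. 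The key observation is that Lemmas~\ref{bupforisos}, \ref{bupforcontractions} and~\ref{local} are exactly this statement in the cases where $\Phi$ is a pure contraction and each $\pi_i$ is, respectively, a local isomorphism, a pure contraction, or (for $\Phi$ arbitrary) a local reordering.

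First I would reduce to the case that $\Phi$ is a pure contraction. Since $\Phi \in \DO$ has order-preserving vertex map, the corollary to Proposition~\ref{fact} factors it as $\Phi = \rho \circ \Psi$ with $\Psi : \Gamma' \to \Upsilon$ a pure contraction and $\rho : \Upsilon \to \Gamma''$ a local reordering. As $\rho$ is invertible and belongs to $\DO$, producing the desired factorization of $\Phi$ is equivalent to producing one of $\Psi$ through the same $\omega$, with $f''$ replaced by $\rho^{-1} f''$. Here Lemma~\ref{Dnes_s_Clemensem_do_Matu} identifies $\inv{\Psi}(i)$ with $\inv{\Phi}(i)$ up to the global leg-relabeling induced by $\rho$ at the $i$th vertex, so the prescribed fiber maps $\pi_i$ transport to fiber maps of $\Psi$. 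Thus I may assume $\Phi$ is a pure contraction.

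Now I would peel off the three constituents of each fiber map in turn. Proposition~\ref{fact} decomposes each $\pi_i = {\it Reo}_i \circ {\it Cont}_i \circ {\it Li}_i$. Lemma~\ref{bupforisos}, applied to $\Phi$ and the local isomorphisms ${\it Li}_i$, yields $\Gamma' \stackrel{a_1}{\to} \Gamma_{(1)} \stackrel{b_1}{\to} \Gamma''$ with $a_1$ a local isomorphism inducing ${\it Li}_i$ and $b_1$ a pure contraction whose fiber over $i$ is the codomain of ${\it Li}_i$; Lemma~\ref{bupforcontractions}, applied to $b_1$ and the pure contractions ${\it Cont}_i$, yields $\Gamma_{(1)} \stackrel{a_2}{\to} \Gamma_{(2)} \stackrel{b_2}{\to} \Gamma''$ with $a_2, b_2$ pure contractions and $(a_2)_i = {\it Cont}_i$; finally Lemma~\ref{local}, applied to $b_2$ and the local reorderings ${\it Reo}_i$, yields $\Gamma_{(2)} \stackrel{a_3}{\to} \Gamma_{(3)} \stackrel{b_3}{\to} \Gamma''$ with $a_3$ a local reordering inducing ${\it Reo}_i$ and, by the remark following Lemma~\ref{local}, $b_3$ a contraction, hence in $\DO$. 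Setting $\omega := a_3 \circ a_2 \circ a_1$ and $f'' := b_3$ gives $\Phi = f'' \circ \omega$ with $f'' \in \DO$, $\inv{f''}(i) = F''_i$, and $\omega_i = (a_3)_i (a_2)_i (a_1)_i = {\it Reo}_i \circ {\it Cont}_i \circ {\it Li}_i = \pi_i$; reinserting $\rho$ as in the reduction recovers the factorization of the original $\Phi$. Uniqueness of $(\omega, f'')$ follows from the uniqueness asserted in each of the three lemmas, once one checks that any competing factorization necessarily decomposes in the same staged fashion.

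I expect the main obstacle to be the leg-ordering bookkeeping rather than any conceptual difficulty: carrying out the reduction $\Phi = \rho \circ \Psi$ and matching the fibers of $\Phi$ with those of $\Psi$ through Lemma~\ref{Dnes_s_Clemensem_do_Matu} without order errors, verifying at each stage that the residual maps $b_1, b_2$ really are pure contractions so that the next lemma applies, and confirming that $b_3$ lands in $\DO$. All the substantive content is already packaged in Proposition~\ref{fact} and the three blow-up lemmas.
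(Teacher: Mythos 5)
Your existence argument is, modulo packaging, the paper's own proof: the same initial factorization $\Phi = \rho \circ c$ into a pure contraction followed by a local reordering via Proposition~\ref{fact}, the same transport of the prescribed fiber maps through the global-relabeling of Lemma~\ref{Dnes_s_Clemensem_do_Matu}, the same decomposition of each fiber map into ${\it Reo}\circ{\it Cont}\circ{\it Li}$, and the same stage-by-stage realization via Lemmas~\ref{bupforisos}, \ref{bupforcontractions} and~\ref{local}, in that order. One step you skip is not mere bookkeeping, though: Proposition~\ref{fact} and the three blow-up lemmas are statements about the category $\ttprGr$ of \emph{preordered} graphs, while the lemma concerns $\Gr$. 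After the construction one must still verify that the middle graph carries a global order, i.e.\ an ordered bijection of its legs with $\bar n$ compatible with $\omega$ and $f''$; the paper supplies this by noting that all the morphisms involved are bijective on legs, so the global order of $\Gamma'$ (equivalently $\Gamma''$) transports uniquely. Without this, $\omega$ and $f''$ are not yet morphisms of $\Gr$.

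The genuine gap is uniqueness. You propose to deduce it from the uniqueness clauses of the three lemmas ``once one checks that any competing factorization necessarily decomposes in the same staged fashion'' --- but that check is the entire difficulty, and nothing you cite provides it. A competing pair $(\omega', f'')$ satisfies only $f'' \in {\tt \Delta}\Gr$ and $\omega'_i = \pi_i$; there is no reason it should arise by successively realizing the ${\it Li}_i$, ${\it Cont}_i$ and ${\it Reo}_i$. To make your reduction work you would need that the ${\it Reo}\circ{\it Cont}\circ{\it Li}$ decomposition of $\omega'$ given by Proposition~\ref{fact} is fiberwise compatible with the chosen decompositions of the $\pi_i$ (e.g.\ that the ${\it Li}$-part of $\omega'$ induces exactly ${\it Li}_i$ on the $i$th fiber), and Proposition~\ref{fact} asserts no such functoriality, nor even uniqueness of its factorization. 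The paper sidesteps this with a direct rigidity argument: since $f''$ is order-preserving, the vertex set of the middle graph is forced to be the ordinal sum of the vertex sets of the prescribed fibers $\Lambda_i$, its flag set the ordinal sum of their flag sets, its involution is determined by those of $\Gamma''$ and the $\Lambda_i$, and its global order by the legs; the vertex and flag components of $\omega$ and $f''$ are then pinned down by the prescribed fiber maps together with order-preservation. You should either adopt that direct determination of the data or prove the missing fiberwise-compatibility lemma for the factorization of Proposition~\ref{fact}; as written, your uniqueness argument does not close.
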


\begin{proof} 
Let $\Phi : \Gamma \to \Gamma'$ be an order-preserving 
map with fibers $\Gamma_i$, $i
\in |\Gamma'|$. Assume we are given a morphism $\Xi_i : \Gamma_i \to
\Lambda_i$ for each $i$. 

Let us first ignore the global orders of graphs involved, i.e.\ work
in the category ${\ttprGr}$ of preordered graphs.
 Using Proposition~\ref{fact}, we first factorize $\Phi$ into a pure
contraction $c$ followed by a local reordering $\rho$ as in the bottom~of
\begin{equation}
\label{az_spony_mostu_sepnou_reku_noc_a_ty_jsi_jeden}
\xymatrix@C=3em{
A \ar[r]^\beta \ar[rd]^u  &\ar[r]^\gamma\ar[d]^v  B & 
C\ar[d]^b \ar[ld]_w
\\
\Gamma \ar[r]^c \ar[u]^\alpha  \ar@/_2em/[rr]^\Phi  & \Gamma'' \ar[r]^\rho 
&\ \Gamma'\,.
}
\end{equation}

Let $\widehat \Gamma_i$ be the graph obtained from  $\Gamma_i$ by
modifying its global order according to the action of the local
reordering $\rho$ as in Lemma~\ref{Dnes_s_Clemensem_do_Matu}. Notice
that $\widehat \Gamma_i$ is the fiber of $c$ over $i \in |\Gamma''|$.  
Let~$\widehat \Lambda_i$ be the graph $\Lambda_i$ with the global order
modified in the same
manner, and $\widehat \Xi_i : \widehat \Gamma_i \to
\widehat \Lambda_i$ the induced map.
We factorize $\widehat  \Xi_i$ as a quasibijection 
followed by a pure contraction and a local
reordering, as in
\[
\widehat \Xi_i : \widehat \Gamma_i \stackrel{\alpha_i}\longrightarrow
A_i \stackrel{\beta_i}\longrightarrow B_i \stackrel{\gamma_i}\longrightarrow 
\widehat \Lambda_i, \ i \in |\Gamma''|.
\]
We then realize these families of maps as the induced 
maps between fibers  step
by step using Lemmas~\ref{bupforcontractions}, \ref{bupforisos} and \ref{local}
giving rise to preordered graphs $A,B,C$ together with the morphisms $\alpha$, $\beta$ and $\gamma$
in~(\ref{az_spony_mostu_sepnou_reku_noc_a_ty_jsi_jeden}). It is clear
that diagram
\begin{equation}
\label{po_dlouhe_dobe_opet_pisu1}
\xymatrix@C=2em@R=1.2em{\Gamma\ar[rr]^a \ar[dr]_{\Phi} && C \ar[ld]^{b}
\\
&\Gamma'&
}
\end{equation} 
with $a : = \gamma \kompozice
\beta \kompozice \alpha$ and $b : = \rho \kompozice w$ commutes. By
Lemma~\ref{Dnes_s_Clemensem_do_Matu},  $a$
induces the requisite maps between the fibers in the category of
preordered graphs. Since the forgetful functor $\Gr \to {\ttprGr}$ is
faithful, the same is true also in the category of ordered graphs.

We must prove that the graph $C$
in~(\ref{po_dlouhe_dobe_opet_pisu1}) thus constructed carries
a compatible global order. Since 
morphisms in $\ttprGr$ map legs to legs bijectively,
the unique dashed arrow in
\[
\xymatrix@R=1em@C=1em{\Leg(\Gamma) && \Leg(C) \ar[ll]_\cong
\\
&\bar n  \ar[ul]_(.3)\cong \ar@{-->}[ur] \ar[d]_(.3)\cong&
\\
&\Leg(\Gamma') \ar[uul]^\cong\ar[uur]_\cong & 
}
\]
provides the requisite global order of $C$.

We need to prove that the factorization~(\ref{po_dlouhe_dobe_opet_pisu1})
is unique. Let $\Gamma = (F,V)$, $\Gamma' = (F',V'),$  
$\Lambda_i = (F_i,V_i)$ for $i \in V'$ and $C=(G,W).$ Since the map $b: C \to
\Gamma' $  is order preserving, the set $W$ of vertices of $C$ 
must be the ordinal sum $\bigcup_{i
  \in V'}V_i$ of the sets of vertices of the fibers. Likewise, the set of flags $G$ of $C$ equals
the ordinal sum $\bigcup_{i
  \in V'} F_i$. It is 
not difficult to show that also the involution on $G$ is determined by
the involutions on $F'$ and $F_i$, $i \in V'$. Thus the graph
$C$ is uniquely determined by the input~data, namely by $\Gamma'$
and the fibers $\Lambda_i$, $i\in V'$. 

Let us discuss the uniqueness of the maps in~(\ref{po_dlouhe_dobe_opet_pisu1}). 
As each vertex of $\Gamma$ belongs to a unique fiber of
$\Phi$, the horizontal arrow in the diagram
\[
\xymatrix{
V \ar@{->>}[rr]  \ar@{->>}[rd]  &&W  \ar@{->>}[ld]
\\
&V'&
}
\]
of the induced maps of vertices is uniquely determined by
the maps $\Vert(\Gamma_i) \to \Vert(\Lambda_i)$, $i\in V'$, induced
by the prescribed maps $\Xi_i$ of the fibers. Since both down-going maps
are order preserving by assumption, the right down-going map is
uniquely determined by the remaining two. 
By a similar argument, the horizontal inclusion in the diagram
\[
\xymatrix{F&& \ar@{_{(}->}[ll]   G
\\
&F' \ar@{_{(}->}[lu] \ar@{^{(}->}[ur]  &
}
\]
of the induced maps of flags is uniquely determined by the maps 
$\Flag(\Lambda_i) \to \Flag(\Gamma_i)$, $i \in V'$, induced by the
prescribed maps of the fibers, 
so the right up-going inclusion is unique as well.
This finishes the proof.
\end{proof}

\begin{corollary}\label{GrBU}
The operadic category $\Gr$ satisfies  the blow-up axiom.
\end{corollary}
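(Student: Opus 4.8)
The plan is to deduce this corollary as an immediate application of Lemma~\ref{I_laboratorni_vysetreni_mne_ceka.}, which already packages the general implication
\[
\hbox{\rm \QBI\ \& \Fac\ \& \WBU} \Longrightarrow \hbox{\rm \SBU\ \& \SFac}.
\]
Thus the only thing I would need to do is to check that the operadic category $\Gr$ satisfies the three hypotheses on the left-hand side; the strong blow up axiom (and, as a free by-product, strong factorizability) then follows with no further argument.

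First I would recall that all three inputs were established in Section~\ref{Ceka_mne_Psenicka.} and are summarized in the opening paragraph of the present section. Invertibility of quasibijections (\QBI) is precisely the content of the lemma asserting that all \qb{s} in $\Gr$ are invertible. Factorizability (\Fac), i.e.\ $\Gr = \DO \circ \QO$, is Lemma~\ref{Clemens zitra odleta.}. The weak blow up axiom (\WBU) is the long lemma closing Section~\ref{Ceka_mne_Psenicka.}, whose proof realized a prescribed family of maps between fibers step by step via Proposition~\ref{fact} together with the three partial blow up results, Lemmas~\ref{bupforcontractions}, \ref{bupforisos} and~\ref{local}. With these three facts in hand, Lemma~\ref{I_laboratorni_vysetreni_mne_ceka.} applies verbatim to $\ttO = \Gr$ and yields the assertion.

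I do not expect any genuine obstacle in the corollary itself, since it is a mechanical instantiation: all of the real work has already been absorbed into the cited lemmas. The one point I would verify carefully is that it is exactly invertibility of quasibijections, rather than some weaker property, that Lemma~\ref{I_laboratorni_vysetreni_mne_ceka.} consumes. Inspecting its proof, invertibility is used precisely to produce the comparison quasibijection $p : Q' \to Q''$ between two candidate factorizations and, via the uniqueness clause of the weak blow up axiom, to force $p = \id$; since $\Gr$ does possess invertible quasibijections, this step goes through and no gap arises. Hence the corollary is proved.
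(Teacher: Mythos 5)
Your proposal is correct and coincides with the paper's own proof, which likewise deduces the corollary by applying Lemma~\ref{I_laboratorni_vysetreni_mne_ceka.} to $\Gr$, the hypotheses \QBI, \Fac\ and \WBU\ having been verified in Section~\ref{Ceka_mne_Psenicka.}. Your extra check that the invertibility of quasibijections is exactly what the uniqueness step of that lemma consumes is accurate, though the paper leaves it implicit.
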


\begin{proof}
  The proof follows from Lemma~\ref{I_laboratorni_vysetreni_mne_ceka.}
  whose assumptions for the operadic category $\Gr$ were verified in
  Lemmas~\ref{Pozitri kontrola u Vondry.},~\ref{Clemens zitra odleta.}
  and~\ref{Prohlidkou jsem prosel.}.
\end{proof}

\begin{lemma}\label{GrUFC}
The category $\Gr$ satisfies the unique fiber condition.
\end{lemma}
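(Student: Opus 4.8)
The plan is to pin down the local terminal objects of $\Gr$ and then to follow a single piece of data, the ordering of legs, through the fibre construction. First I would note that a local terminal object $t$ has $|t|=1$, hence it is a one-vertex graph lying in one of the components $\ttprGr/1_n$ and, being terminal there, is isomorphic to the chosen object $c_n=\id:1_n\to 1_n$, where $n$ is its number of legs. Since isomorphisms of graphs are bijective on flags, $t$ then has exactly $n$ flags; as it also has $n$ legs and a single vertex, every flag is a leg and its involution is trivial. Thus $t$ is a corolla carrying a global order of its legs recorded by its structure map $p_t:t\to 1_n$, and by the construction of $\Gr$ (Definition~\ref{Radeji_bych_sel_s_Jaruskou_na_vyhlidku_sam.} and the subsequent discussion) it is chosen exactly when this global order agrees with the local order of its flags at the vertex. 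The statement then reduces to showing that the fibre $\inv\Phi(*)$ over the unique vertex $*\in|t|$ equals $T$ precisely when these two orders on the legs of $t$ coincide.

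Next I would unwind the underlying preordered graph of the fibre. By the definition of fibres in $\ttprGr$, the fibre over $*$ has vertex set $\phi^{-1}(*)=V_T$, flag set $g_T^{-1}(V_T)=F_T$ and structure map $g_T$, so it agrees with $T$ on all of this data; its involution $\tau$ sends $h\mapsto h$ on $\Im(\psi)$ and $h\mapsto\sigma_T(h)$ elsewhere. Here I would use that $t$ is a corolla, so all its flags are legs and $\psi$ carries them to legs of $T$, whence $\Im(\psi)\subseteq\Leg(T)$; as legs are the fixed points of $\sigma_T$ this forces $\tau=\sigma_T$. Consequently $\inv\Phi(*)$ and $T$ share the same underlying preordered graph, and the only possible discrepancy between them as objects of $\Gr$ is the global order imposed on their common legs.

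Finally I would compare the two orders. By the definition of the fibre in $\Gr$ (the passage to diagram~(\ref{Jana_taky.})), the legs of $\inv\Phi(*)$ inherit the order of the ordinal $g_t^{-1}(*)=F_t$, that is, the \emph{local} order of the legs of $t$ transported along $\psi$. On the other hand $T$ carries the global order determined by $p_T$; since morphisms of ordered graphs preserve the global order of legs and $p_t\circ\Phi=p_T$, this global order corresponds, along the same $\psi$, to the \emph{global} order of the legs of $t$. Comparing the two, $\inv\Phi(*)=T$ holds if and only if the local and global orders of the legs of $t$ agree, i.e.\ if and only if $t$ is chosen, which is exactly the unique fibre axiom.

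The step I expect to be the main obstacle is the bookkeeping in the last paragraph: one must keep rigorously apart the \emph{local} order of the flags of $t$ at its vertex, which the fibre construction reads off, from the \emph{global} order of its legs, which its structure map records, and check that the unique map $\Phi$ matches the global orders while the fibre definition feeds in the local one. This is precisely the gap between a general local terminal object and a chosen one, so it carries the whole content of the axiom; by contrast, the identification of local terminal objects with corollas and the invariance $\tau=\sigma_T$ of the involution are routine once the fibre is unwound.
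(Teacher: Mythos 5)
Your proof is correct and follows essentially the same route as the paper's: both unwind the fiber over the single vertex of the corolla $t$, observe that it can differ from $T$ only in the global order of its legs, and use the commutativity $u=\psi\circ\alpha$ of the leg-order triangle together with the injectivity of $\psi$ to force $\alpha=\id$, i.e.\ that $t$ is the chosen local terminal object. Your preliminary classification of local terminal objects as corollas and the explicit check that the fiber's involution equals $\sigma_T$ are details the paper leaves implicit, but the core argument is identical.
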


\begin{proof}
Assume that the ordered graph $\Gamma$ is given by the left diagram below
\[
T=
\xymatrix{F \ar[d]_g  & \ar@{_{(}->}[l]_u  \ar[d]\ \bar n
\\
V \ar@{->>}[r] & \bar 1
},
\hskip 2em
t=\xymatrix{\bar n \ar[d]  & \ar@{_{(}->}[l]_\alpha  \ar[d]\ \bar n
\\
\bar 1 \ar@{->>}[r] & \bar 1
}
\]
and the local terminal object by the right one. A morphism
$\Phi:\Gamma\to t$ in $\Gr$ is characterized 
by a monomorphism $\psi : \bar n \to F$ in the diagram  
\begin{equation}
\label{NOE_v_padesatce}
\xymatrix@R=.3em@C=2.5em{
F  \ar[dddd]_{g} && \ar@{_{(}->}[ll]_\psi \ar[dddd] \bar n
\\
&\bar{n} \ar@{^{(}->}[lu]^u \ar[dd]  \ar@{_{(}->}[ru]_\alpha &
\\
{}
\\
&\bar{1}  &
\\
V  \ar@{->>}[ru] \ar[rr] &&\bar 1 \ar@{->>}[lu]
}
\end{equation}
and  by~(\ref{Jana_taky.}) its fiber $\inv{\Phi}(1)$ equals
\[
\xymatrix{F \ar[d]_g  & \ar@{_{(}->}[l]_\psi  \ar[d] \bar n
\\
V \ar@{->>}[r] & \ \bar 1\,.
}
\]
Thus $\inv{\Phi}(1) = \Gamma$ if and only if $\psi = u$. On the other
hand, the commutativity of the upper triangle
in~(\ref{NOE_v_padesatce}) implies that $u = \psi\alpha$. Since $\psi$
is a monomorphism, one sees that $\alpha = \id$, thus $t$ is the
chosen local terminal object.
\end{proof}

Since the assumptions of Lemma~\ref{Podivam_se_do_Paramaty??} are
satisfied by the operadic category of ordered graphs, one has:

\begin{corollary}\label{GrR}
The category $\Gr$ is rigid.
\end{corollary}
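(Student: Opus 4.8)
The plan is to read this off as a direct specialization of Lemma~\ref{Podivam_se_do_Paramaty??}, which records the implication that an operadic category satisfying both the weak blow up axiom and the unique fiber axiom is rigid. First I would recall that both hypotheses have already been checked for $\Gr$ earlier in this section: the weak blow up axiom in the lemma asserting that $\Gr$ satisfies it, and the unique fiber axiom in the lemma verifying the unique fiber axiom for $\Gr$. Substituting $\ttO = \Gr$ into Lemma~\ref{Podivam_se_do_Paramaty??} then yields rigidity at once.

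Since the statement is merely the conjunction of two previously established facts fed into a general implication, there is no genuine obstacle to overcome; the only point requiring care is that the cited hypotheses are exactly the two appearing in Lemma~\ref{Podivam_se_do_Paramaty??}. For the reader who prefers a self-contained check, one would instead unwind Definition~\ref{Porad_nevim_jestli_mam_jit_na_ty_narozeniny.} directly: given $\phi : S \to T$ in $\DO$ and an isomorphism $\sigma : T \to T$ with $\sigma \circ \phi = \phi$, one first passes to cardinalities and uses the rigidity of the category $\Surj$ of nonempty finite sets and surjections, noted after Definition~\ref{Porad_nevim_jestli_mam_jit_na_ty_narozeniny.}, to force $|\sigma| = \id$. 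One then applies Axiom~(iv) together with Lemma~\ref{Podivam_se_do_Paramaty?} and the unique fiber axiom to conclude that every fiber of $\sigma$ is a chosen local terminal object, so that $\sigma$ is a \qb; as $|\sigma| = \id$ is order-preserving we moreover have $\sigma \in \DO$, whence Corollary~\ref{zase_mam_nejaky_tik} identifies $\sigma$ with the identity. This is precisely the argument proving Lemma~\ref{Podivam_se_do_Paramaty??}, so invoking that lemma is the route I would actually take.
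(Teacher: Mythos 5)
Your proposal is correct and matches the paper exactly: the paper derives the corollary by noting that the assumptions of Lemma~\ref{Podivam_se_do_Paramaty??} (the weak blow up and unique fiber axioms, both verified earlier for $\Gr$) hold, and your self-contained unwinding reproduces the paper's own proof of that lemma step for step.
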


In the second paper \cite{sydney2} of the series we introduce the concept of quadraticity of operads over an operadic category~$\ttO$. For this purpose we add the following definitions.

\begin{definition}
\label{grad}
A {\em grading\/} on an operadic category $\ttO$ is a map $e :
{\rm Objects}(\ttO) \to \bbN$ of sets with the property that
\begin{equation}
\label{Parramatta}
e(T) + e(F_1) + \cdots + e(F_k) = e(S)
\end{equation}
for each $f : S \to T$ with fibers $\Rada F1k$. In this situation we
define the {\em grade\/} $e(f)$ of $f$ by $e(f) := e(S)- e(T)$.
\end{definition}

\begin{example}\label{canon} Each constant-free operadic
category $\ttO$ 
bears the {\em canonical grading\/} given by $e(T) := |T| -1$.
\end{example} 

\begin{definition}
\label{sgrad}
A graded operadic category $\ttO$ is {\em strictly graded\/}
if a morphism $f \in \ttO$ is an isomorphism if and only if $e(f) =
0$.
\end{definition}

We are grateful to our anonymous referee for the idea  and the proof
of the following

\begin{lemma}\label{steve?}
Let $\ttO$ be a graded operadic category. Then
\begin{itemize}
\item[(i)] for any local terminal object $t$ of $\ttO$ its grade $e(t)=0$. 
\item[(ii)]  For any isomorphism $f$ in $\ttO$ the grade $e(f) =0$.
\end{itemize}

Conversely, if $\ttO$ is factorizable, all
\qb{s} in $\ttO$ are isomorphisms, the weak blow-up axiom is fulfilled and the only objects of grade $0$ are local terminals,
then $e(f) =0$ implies that $f$ is an isomorphism, so
$\ttO$ is strictly graded.
\end{lemma} 

\begin{proof} Observe that 
the identity morphism  $\id: U  \to U$ of a trivial object has a
unique fiber~$U$, hence $2e(U) = e(U)$ by~(\ref{Parramatta}) and
$e(U) = 0$. 
Let $t$ be a local terminal object  
and $F$ be the fiber of the unique morphism $U\to t.$ Then $e(t)+e(F) = e(U) =0.$ So both $e(t)$ and $e(F)$ must be $0.$ This proves (i).
 
According to the first part of Lemma \ref{Podivam_se_do_Paramaty?}, every isomorphism $f$ has local terminals as its fibers, hence  $e(f)=0.$ This proves (ii).
The converse statement follows from the second part of Lemma
\ref{Podivam_se_do_Paramaty?}. 
\end{proof}

The grading can be transferred along operadic functors. 
\begin{lemma} 
 if $F : \ttO \to \ttP$ is an operadic functor and
$\ttP$ is graded, then $\ttO$ has a transferred  grading given by the formula
\begin{equation}
  \label{Dnes_plynari_ale_ja_jsem_v_Australii.}
e(T) := e(F(T)),\ T \in \ttO.  
\end{equation}
\end{lemma}

\begin{remark}
It is easy to see that a grading on $\ttO$ is the same as an
$\ttO$-operad in the discrete  symmetric  
monoidal category $(\bbN,+,0)$. The transfer of the grading amounts to the restriction functor between the category of operads. 
\end{remark}

\begin{lemma}
\label{GrGr} 
The operadic category $\Gr$ is  graded by the number of internal  edges of a graph. 
\end{lemma}
\begin{example}\label{grc is strict} The grading of $\Gr$ is not
  strict. Indeed, consider a unique morphism $!_{p+q}$ of the
  non-connected graph obtained as the disjoint union of corollas
  $c_p\cup c_q$ to $c_{p+q}$; recall that $c_n, n\ge 0$, are the trivial
  objects of $\Gr$. This is not an isomorphism, but $e(!_{p+q})=0.$
  On the other hand the operadic subcategory of {\em connected} graphs
  $\Grc$, cf.~Example \ref{musim_na_ten_odber}, is strictly graded for
  the transferred grading since the only connected graphs without
  internal edges are ordered corollas, which are local terminals (but
  not necessary trivial) in $\Grc.$
\end{example} 

\section{Discrete operadic (op)fibrations}
\label{Minulou_sobotu_jsem_odletal_vlekarskou_osnovu.}

In this section we focus on discrete operadic
fibrations $p: \ttO \to \ttP$. We show that 
the operadic category $\ttO$ retains
some useful properties of $\ttP$. Since, as we know from~\cite[page~1647]{duodel},
each set-valued $\ttP$-operad determines a discrete operadic fibration $p:
\ttO \to \ttP$, this gives a method to obtain new operadic
categories with controlled properties from the old ones. In the second
part of this section
we formulate similar statements for opfibrations and cooperads.

\subsection{Discrete operadic fibrations}
\label{snad_mne_kotnik_prestane_bolet}
We start by recalling Definition~2.1 of \cite{duodel}:

\begin{definition}
\label{psano_v_Myluzach}
  An operadic functor $p:\ttO\to \ttP$ is  a {\it discrete operadic
    fibration} if 
\begin{itemize}
\item[(i)]
$p$ induces a surjection $\pi_0(\ttO) \twoheadrightarrow
\pi_0(\ttP)$ and
\item[(ii)]
for any morphism $f : T\to S$ in $\ttP$ and any list of objects
  $t_1,\ldots, t_{k}, s \in \ttO$, where $k= |S|$, such that
  \[
  p(s) = S \mbox { and } p(t_i) = f^{-1}(i) \mbox { for  all} \ i
  \in |S|,
  \]
  there exists a unique $\sigma : t\to s$ in $\ttO$ such that
  \[
  p(\sigma) = f \mbox { and } t_i = \sigma^{-1}(i)\ \mbox { for all} \ i
  \in |S|.
  \]
\end{itemize}
\end{definition}

\begin{lemma} 
\label{noha_stale_boli}
Let $p:\ttO\to\ttP$ be a discrete operadic fibration and  $f:T\stackrel\sim\to
S$  a quasibijection in $\ttP$. Let $s\in \ttO$ be such that
$p(s)=S$. Then there exists a unique quasibijection $\sigma$ in~$\ttO$ with codomain $s$
such that $p(\sigma) = f$.
\end{lemma}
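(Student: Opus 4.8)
The plan is to deduce the statement directly from the lifting property~(ii) in Definition~\ref{psano_v_Myluzach}; the only real work is to pin down \emph{which} fibers must be fed into that property. The crucial auxiliary observation, which I would first record as a small claim valid in every operadic category, is that the fibers of a \qb\ are determined by its target alone: for a \qb\ $\tau : x \to y$ one has $\tau^{-1}(i) = \id_y^{-1}(i)$ for each $i \in |y|$. To prove it I would apply Axiom~(iii) to the commutative triangle $x \stackrel{\tau}{\longrightarrow} y \stackrel{\id_y}{\longrightarrow} y$, whose composite is again $\tau$; for $i \in |y|$ this yields a morphism $\tau_i : \tau^{-1}(i) \to \id_y^{-1}(i)$ whose source is trivial (as $\tau$ is a \qb) and whose target is trivial (by Axiom~(ii)). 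Two trivial objects joined by a morphism lie in the same connected component, hence coincide with its chosen local terminal object, giving $\tau^{-1}(i) = \id_y^{-1}(i)$.

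For existence I would apply this claim in $\ttP$ to obtain $f^{-1}(i) = \id_S^{-1}(i)$, and then set $t_i := \id_s^{-1}(i) \in \ttO$. Since $p$ is a strict operadic functor it preserves fibers and identities, so $p(t_i) = \id_{p(s)}^{-1}(i) = \id_S^{-1}(i) = f^{-1}(i)$. The data $s$ and $\{t_i\}_{i\in|S|}$ therefore satisfy the hypotheses of property~(ii), producing a unique $\sigma : t \to s$ with $p(\sigma) = f$ and $\sigma^{-1}(i) = t_i$. Each $t_i = \id_s^{-1}(i)$ is trivial, so all fibers of $\sigma$ are trivial and $\sigma$ is the desired \qb.

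For uniqueness I would take an arbitrary \qb\ $\sigma' : t' \to s$ with $p(\sigma') = f$ and invoke the claim once more, this time in $\ttO$ with target $s$: it forces $\sigma'^{-1}(i) = \id_s^{-1}(i) = t_i$ for every $i$. Thus $\sigma'$ is a lift of $f$ over $s$ having precisely the fibers $t_i$, and the uniqueness half of property~(ii) gives $\sigma' = \sigma$.

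The step I expect to carry the weight is the auxiliary claim that the fibers of a \qb\ depend only on its target. It is exactly this rigidity that prevents the possible non-injectivity of the induced map $\pi_0(\ttO) \twoheadrightarrow \pi_0(\ttP)$ from spawning several distinct lifts over $s$, and so it is what makes the uniqueness assertion hold; the remainder is a routine instantiation of the discrete operadic fibration axiom together with the fact that $p$ preserves fibers, identities and chosen local terminal objects.
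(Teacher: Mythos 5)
Your proof is correct, and it reaches the conclusion by a genuinely different route than the paper. The paper's proof is a two-liner: it invokes \cite[Lemma~2.2]{duodel}, asserting that a discrete operadic fibration induces an \emph{isomorphism} $\pi_0(\ttO)\cong\pi_0(\ttP)$, combines this with the fact that strict operadic functors preserve chosen local terminal objects to get a bijection between the trivial objects of $\ttO$ and of $\ttP$, and then feeds the unique trivial objects lying over the (trivial) fibers of $f$ into the lifting property of Definition~\ref{psano_v_Myluzach}(ii). You instead bypass the $\pi_0$-isomorphism entirely by proving an intrinsic rigidity claim valid in any strict operadic category: applying Axiom~(iii) to the triangle $x \stackrel{\tau}{\to} y \stackrel{\id_y}{\to} y$ with composite $\tau$ produces $\tau_i : \tau^{-1}(i) \to \id_y^{-1}(i)$, a morphism between trivial objects, which must therefore be the same chosen local terminal object of their shared component; hence the fibers of a \qb\ are exactly the sources $\id_y^{-1}(i)$ of its target. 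This pins the lifting data down as $t_i = \id_s^{-1}(i)$ using only fiber- and identity-preservation of $p$, and it makes the uniqueness half fully explicit where the paper leaves it compressed into ``uniquely complete the data'': any \qb\ lift $\sigma' : t' \to s$ of $f$ has fibers $\id_s^{-1}(i) = t_i$ by your claim, so the uniqueness clause of Definition~\ref{psano_v_Myluzach}(ii) forces $\sigma' = \sigma$. What the paper's citation buys is brevity; what your argument buys is self-containedness and a reusable local fact that, as you correctly observe, is precisely what blocks non-injectivity of $\pi_0(\ttO) \twoheadrightarrow \pi_0(\ttP)$ from producing several lifts over the fixed $s$ --- and it is, reassuringly, the same argument pattern the paper itself deploys inside the proof of Corollary~\ref{zase_mam_nejaky_tik}.
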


\begin{proof}
We invoke~\cite[Lemma~2.2]{duodel} saying that a
discrete operadic fibration induces an isomorphism of
$\pi_0$'s, plus the  fact that  operadic functors are
required to send
trivial objects to trivial ones. Therefore
$p$ establishes a bijection between the sets of trivial objects of the
categories $\ttO$ and $\ttP.$ Hence, we can uniquely complete the
data for $s$ by a list of trivial objects in place of the prescribed
fibers and construct $\sigma$
as the unique lift of these data.
\end{proof}

\begin{lemma} 
\label{kotnik}Let $p:\ttO\to\ttP$ be a discrete operadic fibration.  If in $\ttP$ all quasibijections are invertible, the same is
  true also for \qb{s} in $\ttO.$ In this case we also have that, for
  any quasibijection $f:T\stackrel\sim\to S$ in $\ttP$ and $t\in\ttO$
  such that $p(t)=T$, there exists a unique quasibijection
  $\sigma:t\stackrel\sim\to s$ such that $p(\sigma)=f.$
\end{lemma}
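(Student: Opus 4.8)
The main tool is Lemma~\ref{noha_stale_boli}, which lifts a \qb\ of $\ttP$ to a \qb\ of $\ttO$ once its \emph{target} has been prescribed, together with the uniqueness built into that lift. My plan is to reduce both assertions to this lifting, after recording one preliminary fact valid inside any operadic category: the inverse of an invertible \qb\ is again a \qb. Indeed, if $f:T\to S$ is a \qb\ with inverse $g:S\to T$, I would apply Axiom~(iii) to the commutative triangle $S\stackrel{g}{\longrightarrow}T\stackrel{f}{\longrightarrow}S$ (whose composite is $\id_S$), obtaining for each $i\in|S|$ an induced map $g_i:\id_S^{-1}(i)\to f^{-1}(i)$ between two chosen local terminal objects; being a morphism into a chosen local terminal object, $g_i$ is the canonical one, so by the remark following the axioms its unique fiber is its source $\id_S^{-1}(i)$. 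Axiom~(iv) then gives $g^{-1}(j)=g_{|f|(j)}^{-1}(j)=\id_S^{-1}(|f|(j))$ for every $j\in|T|$, so all fibers of $g$ are chosen local terminal and $g$ is a \qb. Consequently \qb{s} in $\ttP$ form a groupoid, and the same computation will apply in $\ttO$ once the first assertion is known.

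For the first assertion, let $\sigma:t\to s$ be a \qb\ in $\ttO$. Since operadic functors preserve fibers and trivial objects, $f:=p(\sigma)$ is a \qb\ in $\ttP$, hence invertible by hypothesis, and $g:=f^{-1}$ is a \qb\ by the observation above. Applying Lemma~\ref{noha_stale_boli} to $g:S\to T$ and the object $t$ (for which $p(t)=T$ is exactly the target of $g$) produces a unique \qb\ $\tau:s''\to t$ with $p(\tau)=g$. Now $\sigma\circ\tau$ (a composite of \qb{s}, hence a \qb) and $\id_s$ are both \qb{s} with target $s$ lifting $\id_S=f\circ g$, so they coincide by the uniqueness in Lemma~\ref{noha_stale_boli}; in particular $s''=s$, so $\tau:s\to t$. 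Symmetrically $\tau\circ\sigma$ and $\id_t$ are \qb{s} with target $t$ lifting $\id_T=g\circ f$, whence $\tau\circ\sigma=\id_t$. Thus $\sigma$ is invertible with inverse $\tau$.

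For the second assertion, given a \qb\ $f:T\to S$ in $\ttP$ and $t\in\ttO$ with $p(t)=T$, I would set $g:=f^{-1}$ (a \qb\ of $\ttP$) and lift it with target $t$ by Lemma~\ref{noha_stale_boli}, obtaining a unique \qb\ $\tau:s\to t$ with $p(\tau)=g$. By the first assertion $\tau$ is invertible, and its inverse $\sigma:=\tau^{-1}:t\to s$ is a \qb\ (again by the preliminary observation, now carried out in $\ttO$) with $p(\sigma)=g^{-1}=f$, which settles existence. For uniqueness, any \qb\ $\sigma':t\to s'$ with $p(\sigma')=f$ has, by the first assertion, an inverse $\sigma'^{-1}:s'\to t$ which is a \qb\ lifting $g$ with target $t$; the uniqueness in Lemma~\ref{noha_stale_boli} forces $\sigma'^{-1}=\tau$, hence $\sigma'=\tau^{-1}=\sigma$ and $s'=s$.

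The one point demanding genuine care is the asymmetry between the two assertions: Lemma~\ref{noha_stale_boli} lifts \qb{s} only along a prescribed \emph{target}, whereas the second assertion prescribes the \emph{source}. The device that resolves this is to lift the \emph{inverse} $g=f^{-1}$, whose target is the prescribed source, and then invert inside $\ttO$, which is legitimate precisely because of the first assertion. Hence the real content is the closure of \qb{s} under inversion together with the groupoid structure it yields, and I would establish this explicitly at the outset by the fiber computation above rather than appeal to Lemma~\ref{Kdy_konecne_zas_budu_mit_od_doktoru_pokoj?}, since the unique fiber axiom is not among the present hypotheses.
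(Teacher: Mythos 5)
Your proof is correct, and its skeleton is the same as the paper's: both arguments lift the inverse $g=f^{-1}$ through Lemma~\ref{noha_stale_boli} with the prescribed object as \emph{target}, identify the composites with identities by the uniqueness of lifts, and then invert inside $\ttO$ -- including the same device for the second assertion, where the source/target asymmetry of the lifting lemma is resolved by lifting $g$ rather than $f$. The one genuine divergence is your treatment of the preliminary fact that the inverse of an invertible \qb\ is again a \qb. The paper obtains this by citing Lemma~\ref{Kdy_konecne_zas_budu_mit_od_doktoru_pokoj?}, whose hypothesis is the unique fiber axiom (Definition~\ref{Kveta_asi_spi.}) -- an assumption that does not appear in the statement of the present lemma, so the paper's proof as written tacitly relies on the ambient standing assumptions rather than on the stated hypotheses. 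Your replacement is a direct computation from Axioms~(ii)--(iv) and the remark that the unique fiber of the canonical map to a chosen local terminal object is its source: applying Axiom~(iii) to $S\stackrel{g}{\to}T\stackrel{f}{\to}S$ gives $g_i:\id_S^{-1}(i)\to f^{-1}(i)$, which is the canonical map to a trivial object since $f$ is a \qb, and Axiom~(iv) then identifies every fiber $g^{-1}(j)$ with the chosen local terminal $\id_S^{-1}(|f|(j))$. This is sound and buys a slightly stronger result: the lemma holds for any discrete operadic fibration over a $\ttP$ whose \qb{s} are invertible, with no appeal to \UFB. You also spell out the uniqueness clause of the second assertion (via inverting an arbitrary competitor $\sigma'$ and applying the lift uniqueness to $\sigma'^{-1}$), which the paper leaves implicit in the phrase ``the uniqueness of the lifting guarantees that $\sigma$ is a lift of $f$.'' Your parenthetical uses of the closure of \qb{s} under composition and of identities being \qb{s} are unproblematic, since $\QO$ is by definition a subcategory of $\ttO$.
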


\begin{proof} 
Let $\sigma:t\stackrel\sim\to s$ be a quasibijection in $\ttO$.  
Consider the inverse $g:p(s)\to p(t)$ to the \qb\
$p(\sigma):  p(t) \stackrel\sim\to p(s)$. Notice that $g$ is a \qb\ by
Corollary~\ref{invertingqb}. 
Using Lemma~\ref{noha_stale_boli}, we  lift  $g$ to a  unique \qb\ $\eta:s'\stackrel\sim\to
  t$. The composite $\sigma\kompozice \eta$ is the lift 
  of the identity $p(s)\to p(s)$ so, by uniqueness, it is
  the identity as well, in particular, $s =s'$. 
The composite $\eta\kompozice\sigma$ is the identity
  for the same reason.

  The second part can be established as follows.  
Let $g:S\stackrel\sim\to T$ be the inverse quasibijection to $f$. We
   lift it to a \qb\ $\tilde g : s \stackrel\sim\to t$ in $\ttO$ and
   define $\sigma:t\stackrel\sim\to s$ to be the
  inverse of this lift. The uniqueness of the lifting 
guarantees that $\sigma$ is a lift of $f$.
\end{proof}

\begin{proposition}  
\label{Krtek_s_Laurinkou}
Let $p:\ttO\to\ttP$ be a discrete operadic fibration.
If $\ttP$ is a factorizable  operadic category in which all \qb{s} 
are invertible, then also $\ttO$ is factorizable.
\end{proposition}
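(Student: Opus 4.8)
The plan is to pull the problem down to $\ttP$, factorize there, and lift only the quasibijection part back to $\ttO$. So let $f : T \to S$ be an arbitrary morphism of $\ttO$. Applying the cardinality-compatible fibration $p$ and using that $\ttP$ is factorizable (Definition~\ref{dnes_prednaska_na_Macquarie}), I would write $p(f) = \beta \circ \alpha$ with $\alpha : p(T) \to M$ a \qb\ in $\ttP$ and $|\beta|$ order-preserving, for some intermediate object $M \in \ttP$.

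The key step is to lift $\alpha$, and \emph{only} $\alpha$. Taking $T$ itself as the prescribed source (which trivially satisfies $p(T) = p(T)$), the second part of Lemma~\ref{kotnik} yields a unique \qb\ $\sigma : T \to \tilde M$ in $\ttO$ with $p(\sigma) = \alpha$ and $p(\tilde M) = M$. Since all \qb{s} of $\ttP$ are invertible by hypothesis, the first part of Lemma~\ref{kotnik} guarantees that $\sigma$ is invertible in $\ttO$ as well. This invertibility is the device that makes the whole argument work: rather than trying to lift $\beta$ independently and then verifying that the two lifts compose back to $f$ (a delicate fibre-by-fibre matching), I would simply set $\phi := f \circ \inv{\sigma} : \tilde M \to S$, so that $\phi \circ \sigma = f$ holds by construction.

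It then only remains to check that $\phi \in \DO$, that is, that $|\phi|$ is order-preserving. Because $p$ is a functor, $p(\phi) = p(f) \circ \inv{p(\sigma)} = (\beta \circ \alpha) \circ \inv{\alpha} = \beta$; and because $p$ is a functor over $\Fin$, the underlying map $|\phi|$ coincides with $|\beta|$, which is order-preserving by the choice of factorization in $\ttP$. Hence $\phi \in \DO$, $\sigma \in \QO$, and $f = \phi \circ \sigma$ exhibits the required decomposition, so $\ttO$ is factorizable. I expect no serious obstacle beyond having the inheritance of invertibility of quasibijections (Lemma~\ref{kotnik}) in hand; once that is available the factorization is produced formally, with $\phi = f\inv{\sigma}$ doing all the work, and the remaining verifications are routine functoriality arguments.
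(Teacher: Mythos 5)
Your proof is correct, but it takes a genuinely different and more economical route than the paper's. The paper lifts \emph{both} halves of the factorization of $p(f)$: it first lifts the fiber maps $h_i$ of the \qb\ part (second half of Lemma~\ref{kotnik}), then lifts the order-preserving part $g$ to $\eta \in \DO$ using the discrete-fibration property with these prescribed fibers, then lifts $h$ itself with prescribed codomain (Lemma~\ref{noha_stale_boli}), and finally invokes the \emph{uniqueness} clause of Definition~\ref{psano_v_Myluzach} to identify the composite $\eta\circ\pi$ with the original morphism $\xi$ --- a fibre-by-fibre matching argument. You sidestep all of this: lifting only the \qb\ $\alpha$ with prescribed \emph{source} $T$ (second half of Lemma~\ref{kotnik}, which is exactly where the invertibility hypothesis on $\ttP$ enters), you use the invertibility of $\sigma$ in $\ttO$ (first half of the same lemma) to define $\phi := f\circ\inv{\sigma}$, so that $f=\phi\circ\sigma$ holds by fiat, and membership $\phi\in\DO$ reduces to the functoriality computation $p(\phi)=\beta$ together with the fact that $p$ lives over $\Fin$, whence $|\phi|=|\beta|$ is order-preserving. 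Since Definition~\ref{dnes_prednaska_na_Macquarie} asks only for \emph{some} decomposition $\phi\circ\sigma$ with no condition on fibers, your argument is complete as stated. What the paper's longer route buys is explicit control over the fibers of the $\DO$-part: its lift $\eta$ is built so that its fibers $\tau_i$ are the \qb-images of the fibers of $\xi$, and this fiber bookkeeping is precisely the pattern reused in the strong-factorization and blow-up arguments (Lemma~\ref{I_laboratorni_vysetreni_mne_ceka.}, Proposition~\ref{Vyhodil_jsem_sluchatka.}); your conjugation trick $\phi = f\inv{\sigma}$ gives no such control, but for the bare statement of Proposition~\ref{Krtek_s_Laurinkou} none is needed.
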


\begin{proof} 
Let $\xi:t\to s$ be a morphism in $\ttO.$
  Let $T\stackrel{f}{\to}
  Z \stackrel{g}{\to} S$ be the factorization  of $p(\xi):T\to S$ into a \qb\ $f$ followed
  by an order-preserving $g \in \DP$. Let $h:Z\to T$ be the inverse to $f.$
  Using Lemma~\ref{kotnik} we
  lift $f$ to the unique and invertible
\qb~$\alpha:t\stackrel\sim\to z.$  Let $\beta:z\to t$ be its inverse. Then the morphism
$$z\stackrel{\beta}{\longrightarrow} t \stackrel{\xi}{\longrightarrow} s$$ 
is order preserving since
$$p(\xi\beta) =p(\xi)p(\beta) = p(\xi)h =g$$
is order preserving. Thus $(\xi\beta)\alpha$ is the desired factorization.
%
\end{proof}

\begin{proposition}  
\label{Vyhodil_jsem_sluchatka.}
Let $p:\ttO\to\ttP$ be a discrete operadic fibration. \begin{enumerate}\item[(i)] If the weak blow-up axiom holds in $\ttP$, it also holds in $\ttO;$  \item[(ii)]
If the blow-up axiom holds in $\ttP$, it also holds in $\ttO.$ \end{enumerate}
\end{proposition}

\begin{proof} 

For the weak blow-up axiom let  $h:T\to S$ be an order preserving morphism 
in $\ttO$ with the list of fibers $T_i,i\in|S|$, and let $\tau_i:T_i\stackrel{}{\to} F_i, i\in |S|$ be a family of morphisms. We need to find the unique factorization of $h$ in $\ttO$
\begin{equation*}
\xymatrix@C=2em@R=1.2em{T\ar[rr]^f \ar[dr]_{h} && R \ar[ld]^{g}
\\
&S&
}
\end{equation*}  
such that $g$ is order preserving and for each $i\in S$ the  induced 
morphism on fibers $f_i$ coincides with $\tau_i.$ 

We apply $p$ to $h$ and $\tau_i, i\in |S|.$ We thus obtain the input data 
for the weak blow-up axiom in $\ttP$ and we have the corresponding unique 
factorization of $p(h)$
\begin{equation*}
\xymatrix@C=2em@R=1.2em{p(T)\ar[rr]^\xi \ar[dr]_{p(h)} && \Gamma \ar[ld]^{\gamma}
\\
&p(S)&
}
\end{equation*}  
where $\xi$ acts on fibers as $\xi_i = p(\tau_i) :p(T_i) \to p(F_i).$

Invoking the lifting property of discrete operadic fibrations, we  lift $\gamma$ to $g:R\to S$ with $g^{-1}(i)= F_i.$ 
Since $\gamma$ is order preserving in $\ttP$ the morphism $g$ is also order preserving  in $\ttO.$

Observe that the fibers of $\xi $ are given by $\xi^{-1}(j) = \xi^{-1}_{|\gamma|(j)}(j), j\in |\Gamma|= |R|$ by Axiom (iv) of operadic categories, and hence, are equal to  the fibers $p(\tau_{|\gamma |(j)})^{-1}(j).$
We now use the lifting property of the operadic fibration for the second time to lift $\xi$ to a morphism $f:Q\to R$ in $\ttO$ whose fibers  are exactly $\tau_{|\gamma |(j)}^{-1}(j), j\in |R|.$

 Then the fibers of $f_i:(gf)^{-1}(i) \to g^{-1}(i) = F_i$ are  
 $\tau_{i}^{-1}(j), j\in |\gamma|^{-1}(i).$ But $\tau_i:T_i\to F_i$ has 
 exactly the same fibers, and both morphisms are liftings of $\xi_i.$ 
 Hence, by uniqueness of lifting, we have $f_i = \tau_i$ and $(gf)^{-1}(i) = T_i.$ But now we see that both $h$ and $gf$ are liftings of $p(h)$ and have the same fibers. By uniqueness of lifting again $Q=S$ and $h=gf$ and we obtained the required factorization. 

The proof of the second part of the proposition is similar. We only have to twist indices by the effect of the \qb\ $\sigma$ which is a part of the input data of the blow-up axiom. 
\end{proof}

Important examples of discrete operadic fibrations are provided by the {\it
  operadic Grothen\-dieck construction} introduced in~\cite[page~1647]{duodel}.
Assume that one is given a set-valued $\ttP$-operad 
$\calO$. One
then has the operadic category $\int_\ttP 
\calO$ whose objects are pairs $(T,t)$ where $T \in \ttP$ and $t \in \calO(T).$ A morphism $\sigma : (T,t) \to (S,s)$ for $t
\in \calO(T)$ and $s \in \calO(S)$ is a pair $(\varepsilon,f)$
consisting of a morphism $f : T \to S$ in $\ttP$ and of some
$\textstyle\varepsilon \in \prod_{i \in |S|}
\calO\big(f^{-1}(i)\big)$
such that
\[
\gamma_f(\varepsilon,s) = t,
\]
where $\gamma$ is the composition law of the operad $\calO$.  Composition of
morphisms is defined in the obvious manner.  The category 
$\int_\ttP \calO$ thus
constructed is an operadic category such that the functor $p :
\int_\ttP \calO \to \ttP$ given by
\begin{equation}
\label{bude_205?}
\mbox{$p(t) := T$ for $t \in \calO(T)$ and $p(\varepsilon,f)
  := f$}
\end{equation}
is a discrete operadic fibration. The trivial objects are given by the
operad units \hbox{$1_c \in \calO(U_c)$}.
By~\cite[Proposition~2.5]{duodel}, the above construction 
establishes an equivalence between the category
of set-valued $\ttP$-operads and the category of discrete
operadic fibrations over $\ttP$.

\begin{example} 
\label{musim_na_ten_odber}
Consider the $\Gr$-operad $C$ in $\Set$ such that 
\[
C(\Gamma) := 
\begin{cases}
1 \hbox { (one point set)} & \hbox {if  $\Gamma$ is connected}
\\
\emptyset   & \hbox {otherwise.}
\end{cases}
\]
There is a unique way to extend
  this construction to a $\Gr$-operad. The Grothendieck
  construction of $C$ produces a discrete operadic fibration
  $\Grc\to \Gr$.
  We call $\Grc$ the operadic category of {\em connected ordered graphs\/}.
\end{example}

\begin{example}  
\label{Zkusim_zavolat_Sehnalovi.}
A construction similar to the one in
  Example~\ref{musim_na_ten_odber} 
produces the operadic category $\Tr$ of trees. We consider the operad $\Pi$
with
\[
\Pi(\Gamma) := 
\begin{cases}
1 & \hbox {if  the geometric realization $\mathrm{B}(\Gamma)$ of $\Gamma$  is contractible}
\\
\emptyset   & \hbox {otherwise.}
\end{cases}
\]
The Grothendieck construction gives a discrete operadic fibration $\Tr\to \Gr$.
\end{example}

\begin{example}
\label{Zitra_s_Jarkou_k_Bartosovi.}
Let us orient  edges of a tree $T \in \Tr$ so
that they point to the leg which is the smallest in the global order. 
We say that T is
{\em rooted\/} if the outgoing half-edge of each vertex is the smallest
in the local order at that vertex.
Now define 
\[
R(T) := 
\begin{cases}
1 & \hbox {if  $T$ is rooted}
\\
\emptyset   & \hbox {otherwise.}
\end{cases}
\]
The Grothendieck construction associated to the operad $R$ gives the operadic
category $\RTr$ of {\em rooted trees\/}.
\end{example}

\begin{example}
\label{Predtim_s_Jarkou_na_krest_knihy.}
There is a unique isotopy class of
embeddings of $T \in\Tr$ into
the plane such that the local orders are compatible with the
orientation of the plane. This embedding in turn determines a cyclic order of the
legs of $T$. We say that $T$ is {\em planar\/} if this cyclic order
coincides with the cyclic order induced by the global order of the
legs. The operad
\[
P(T) := 
\begin{cases}
1 & \hbox {if  $T$ is planar}
\\
\emptyset   & \hbox {otherwise}
\end{cases}
\]
gives rise to the operadic category $\PTr$  of planar trees. In a
similar manner we obtain the operadic category $\PRTr$ of {\em planar
rooted trees\/}.
\end{example}

All the above constructions fall into the situation captured by the
following lemma whose proof is obvious.

\begin{lemma}
\label{Vcera jsme byli s Jarkou na nakupech.}
Let $i:\ttC\subset \ttP$ be a full
operadic subcategory such that 
\begin{enumerate}
\item[(i)]
the set of local chosen terminal objects of $\ttP$ coincides with 
the set of  local chosen terminal objects
  of $\ttC$, and 
\item[(ii)] 
for any morphism $f$ in $\ttP$ whose codomain and all
  fibers are in $\ttC$, the domain of $f$ is also in $\ttC$. 
\end{enumerate}
Then $i$ is a discrete operadic fibration.
\end{lemma}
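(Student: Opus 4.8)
The plan is to verify the two defining conditions of Definition~\ref{psano_v_Myluzach} directly for the inclusion $i:\ttC\to\ttO$. Here the roles are swapped relative to the definition: the source category of the putative fibration is $\ttC$, and the base (the category called $\ttP$ in Definition~\ref{psano_v_Myluzach}) is $\ttO$ itself. Since $\ttC$ is a \emph{full} operadic subcategory, its cardinality functor, its fibers, and its induced maps $f_i$ are all inherited from $\ttO$; thus $i$ is automatically a strict operadic functor and nothing needs to be checked concerning compatibility with the operadic structure.

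For condition~(i) of Definition~\ref{psano_v_Myluzach} I would establish surjectivity of the induced map $\pi_0(\ttC)\to\pi_0(\ttO)$ using hypothesis~(i) of the lemma. Every chosen local terminal object $U_c$, $c\in\pi_0(\ttO)$, is a trivial object of $\ttO$, hence by hypothesis it lies in $\ttC$. Since $U_c$ sits in the component $c$ and maps to itself, the image of $\pi_0(\ttC)\to\pi_0(\ttO)$ meets every component, so the map is an epimorphism.

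The real content is condition~(ii), and even there the work is minimal. Given a morphism $f:T\to S$ in the base $\ttO$ together with objects $s\in\ttC$ satisfying $s=S$ and $t_i\in\ttC$ satisfying $t_i=f^{-1}(i)$ for each $i\in|S|$, I observe that $f$ itself is a candidate lift: its codomain $S=s$ lies in $\ttC$, and all of its fibers $f^{-1}(i)=t_i$ lie in $\ttC$ by assumption. Hypothesis~(ii) of the lemma then forces the domain $T$ of $f$ to lie in $\ttC$ as well. Fullness of $\ttC$ now guarantees that $f$, regarded as an arrow with domain $T$ and codomain $S$ inside $\ttC$, is a morphism of $\ttC$; setting $\sigma:=f$ (with $t:=T$) yields a lift with $i(\sigma)=f$ and $\sigma^{-1}(i)=f^{-1}(i)=t_i$, the fibers being inherited from $\ttO$.

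For uniqueness, any other $\sigma':t'\to s$ in $\ttC$ with $i(\sigma')=f$ is literally equal to $f$ once pushed forward into $\ttO$; hence $t'=T=t$ and, by fullness again, $\sigma'=f=\sigma$ already in $\ttC$. This completes the verification of condition~(ii). I do not anticipate any genuine obstacle here: the one point to keep in view is that hypothesis~(ii) is precisely the statement needed to place the domain of $f$ inside $\ttC$, after which fullness supplies both the existence of the lift and its uniqueness, which is why the statement can fairly be called obvious.
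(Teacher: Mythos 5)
Your proof is correct, and it matches the paper exactly in spirit: the paper states this lemma with the remark that its ``proof is obvious'' and gives no argument, and your direct verification of the two conditions of Definition~\ref{psano_v_Myluzach} --- hypothesis~(i) supplying surjectivity on $\pi_0$ via the chosen local terminal objects, hypothesis~(ii) forcing the domain of the candidate lift $\sigma := f$ into $\ttC$, with fullness making $f$ a morphism of $\ttC$ --- is evidently the intended one. One pedantic remark: uniqueness of the lift follows from the inclusion being injective on morphisms (true for any subcategory, full or not), whereas fullness is what you need for \emph{existence}; your phrase ``by fullness again'' attributes uniqueness to the wrong property, but this does not affect the argument.
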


\begin{remark}
\label{Koupil_jsem_si_silikonove_podpatenky.}
For a discrete operadic fibration $p:\ttO\to\ttP$, 
it is not true in general that the unique fiber condition is satisfied 
in $\ttO$ if it is satisfied in $\ttP.$  Thus it has to be verified
separately in each concrete case.
\end{remark}

\begin{example} Consider the one-object, one-morphism operadic
  category $\mathtt{1}$ whose set-valued operads are monoids. The
  operadic category $\mathtt{1}$ obviously satisfies the unique fiber
  condition.  Let $({\EuScript M},\cdot,e)$ be a monoid. The operadic
  Grothendieck construction $\int_{\mathtt 1}{\EuScript M}$ is fibered
  over~$\mathtt{1}$ and has pairs
  $(1,t) =: \mathbf{t},\ t\in {\EuScript M}$, as objects. A morphism
  from $\mathbf{x}$ to $\mathbf{y}$ is given by an element
  $a\in {\EuScript M}$ such that $ay =x.$ The fiber of such a morphism
  is $\mathbf{a}.$ The category $\int_{\mathtt 1}{\EuScript M}$ is
  connected with the trivial object $\mathbf{e} = (1,e).$ Notice that
  $t\in {\EuScript M}$ is invertible if and only if
  $\mathbf{t}$ is a local terminal object in
  $\int_{\mathtt 1}{\EuScript M}.$ Indeed, if $t$ is invertible, then  the
  equation $at = x$ has a unique solution $a = xt^{-1}$, hence there
  is a unique morphism from $\mathbf{x}$ to $\mathbf{t}$ in
  $\int_{\mathtt 1}{\EuScript M}$. The opposite implication is also
  clear. 

On the other hand, the equation $xt =x$ with invertible $t$ 
does not force, in
  general, the equality $t=e$ unless $x$ is invertible as well. For
  example, in the monoid ${\EuScript M}_2({\mathbb Z})$ of \hbox{$2\!\times\! 2$} integer-valued matrices under
the standard  matrix multiplication there are always $t\ne e$ and~$x$ which
  satisfy this equation, for instance
\[
 t := \left(\begin{array}{cc}0 & 1 \\ 1 & 0\end{array}\right),  \ x:=
 \left(\begin{array}{cc}1 & 1 \\0 & 0\end{array}\right).
\]
Since $t$ is invertible, we have that $\mathbf{t}$ is local terminal and the fiber
of $\mathbf{x} \to \mathbf{t}$ is $\mathbf{x}$, but $\mathbf{t}$ is
not the chosen local terminal. Thus $\int_{\mathtt 1}{\EuScript M}_2({\mathbb Z})$
does not fulfill the unique fiber condition.
\end{example}

We close this subsection with the following useful statement.

\begin{proposition}\label{onevsO}
Let $\calO$ be a set-valued $\ttP$-operad, $\int_\ttP \calO \to \ttP$ 
its operadic Grothendieck construction and $\Groterm$ the terminal
set-valued $\int_\ttP \calO$-operad. Then the categories of $\calO$-algebras and
$\Groterm$-algebras are isomorphic, i.e.\ 
\[
\Alg\calO \cong \Alg\Groterm.
\]
\end{proposition}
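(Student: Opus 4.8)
The plan is to produce a natural bijection between $\calO$-algebra structures and $\Groterm$-algebra structures carried by one and the same underlying collection, and then to observe that this bijection is compatible with algebra morphisms.

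First I would pin down the underlying data. Because $p:\int_\ttP\calO\to\ttP$ is a discrete operadic fibration it induces an isomorphism $\pi_0(\int_\ttP\calO)\cong\pi_0(\ttP)=:\frC$ (as recalled in the proof of Lemma~\ref{noha_stale_boli}), and, being a strict operadic functor, it preserves sources, fibers, cardinalities and components. Writing $\Ar$ for the arity functor of $\ttP$ and $\Ar'$ for that of $\int_\ttP\calO$, this gives $\Ar'=\Ar\circ p$ under the above identification, so that for every collection $A=\{A_c\}_{c\in\frC}$ one gets
\[
\End^{\int_\ttP\calO}_A=\Ar'^{*}\End^{\Bq(\frC)}_A=p^{*}\,\Ar^{*}\End^{\Bq(\frC)}_A=p^{*}\big(\End^{\ttP}_A\big).
\]
Hence both kinds of algebras sit on the very same collection $A$, and their endomorphism operads differ only by restriction along $p$.

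The core step is a natural bijection, valid for an arbitrary $\ttP$-operad $\calQ$, between morphisms of $\ttP$-operads and morphisms of $\int_\ttP\calO$-operads
\[
\calO\longrightarrow\calQ
\qquad\longleftrightarrow\qquad
\Groterm\longrightarrow p^{*}\calQ .
\]
To build it, recall that an object of $\int_\ttP\calO$ lying over $T\in\ttP$ is an element $t\in\calO(T)$, that $\Groterm(t)=1$, and that $(p^{*}\calQ)(t)=\calQ(T)$. Thus a morphism $\beta:\Groterm\to p^{*}\calQ$ is nothing but a family $\{\beta_t\}$ with $\beta_t\in\calQ(T)$ for $t\in\calO(T)$, i.e.\ a family of functions $\calO(T)\to\calQ(T)$, $T\in\ttP$; setting $\tilde\alpha_T(t):=\beta_t$ produces the candidate operad map $\tilde\alpha:\calO\to\calQ$, and conversely.

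What remains, and what I expect to be the only real obstacle, is the verification that the two compatibility conditions coincide. Here one uses the explicit description of a morphism of $\int_\ttP\calO$ as a triple $\sigma=(\varepsilon,f):t\to s$ with $f:T\to S$ in $\ttP$, with $\varepsilon_i\in\calO(f^{-1}(i))$ the fibers $\sigma^{-1}(i)$, and with $t=\gamma_f(\varepsilon,s)$; together with the fact that $p^{*}$ turns the $\int_\ttP\calO$-composition $\gamma_\sigma$ of $\Groterm$ into the $\ttP$-composition $\gamma^{\calQ}_{f}$. Under the dictionary above, the statement that $\beta$ is an operad map, namely $\beta_t=\gamma^{\calQ}_{f}\big(\bigotimes_i\beta_{\varepsilon_i},\beta_s\big)$ for every $\sigma$, becomes exactly the statement that $\tilde\alpha$ intertwines $\gamma^{\calO}_{f}$ and $\gamma^{\calQ}_{f}$ evaluated at every $(\varepsilon,s)$; since morphisms $\sigma$ of $\int_\ttP\calO$ are in bijection with the tuples $(f,\varepsilon,s)$ satisfying $t=\gamma_f(\varepsilon,s)$, the two families of equations are identical. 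Compatibility with units matches as well, since the trivial objects of $\int_\ttP\calO$ are precisely the operad units $1_c\in\calO(U_c)$, and both conditions force $\beta_{1_c}$ to be the unit of $\calQ$ at $U_c$.

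Finally I would specialise the core bijection to $\calQ=\End^{\ttP}_A$. By the first paragraph its right-hand side is the set of $\int_\ttP\calO$-operad maps $\Groterm\to\End^{\int_\ttP\calO}_A$, i.e.\ the $\Groterm$-algebra structures on $A$, while its left-hand side is the set of $\calO$-algebra structures on $A$. The bijection is natural in $A$, hence it sends a morphism of $\calO$-algebras (a collection map commuting with all the $\alpha_T$) to a morphism of $\Groterm$-algebras (a collection map commuting with all the $\beta_t$) and back, yielding the isomorphism of categories $\Alg\calO\cong\Alg\Groterm$. Everything beyond the matching of the operad-map conditions is routine bookkeeping resting on the description of the operadic Grothendieck construction and on the naturality of the arity functor.
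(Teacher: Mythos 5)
Your proposal is correct, and it reaches the same punchline as the paper --- re-indexing the structure maps $\alpha_T:\bigotimes_c A_c\ot\calO(T)\to A_{\pi_0(T)}$ as families $\alpha_t$, $t\in\calO(T)$ --- but it is organized along a genuinely different route. The paper's proof identifies $\pi_0(\ttP)$ with $\pi_0(\int_\ttP\calO)$ via $U_c\leftrightarrow 1_c$ and then directly rewrites the $\calO$-algebra operations~(\ref{Dnes_vecer_s_Jarkou}) as the $\Groterm$-algebra operations~(\ref{do_francouzske_restaurace.}), leaving the extension to an isomorphism of categories as ``simple to verify.'' You instead prove a universal statement: for an arbitrary $\ttP$-operad $\calQ$, operad maps $\calO\to\calQ$ biject with operad maps $\Groterm\to p^*\calQ$ (in effect exhibiting $\calO$ as $p_!\Groterm$), combine it with the identity $\End^{\int_\ttP\calO}_A=p^*\big(\End^{\ttP}_A\big)$ coming from $\Ar'=\Ar\circ p$, and then specialize $\calQ:=\End^{\ttP}_A$ using the definition of algebras as morphisms to the endomorphism operad (Definition~\ref{Jaruska_ma_chripecku}). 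Your check that the two operad-map conditions coincide --- morphisms of $\int_\ttP\calO$ are exactly the triples $(f,\varepsilon,s)$ with $t=\gamma_f(\varepsilon,s)$, restriction along $p$ turns $\gamma_\sigma$ into $\gamma_f$, and the trivial objects $1_c$ force preservation of units --- is precisely the verification the paper omits, so your write-up is, if anything, more complete. What the paper's direct comparison of structure operations buys is brevity; what your route buys is a reusable adjunction-style bijection that parallels the characterization $\EuScript{Q}(\ttO)=\Ar(\ttO)_!({\sf 1}_{\ttO})$ via equation~(\ref{Qeq}) in Section~\ref{+construction}, and it makes naturality in $A$ --- hence the matching of algebra morphisms and the isomorphism of categories, not merely a bijection of objects --- automatic rather than a separate bookkeeping step.
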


\begin{proof}
The sets of connected components of the categories $\ttP$ and $\int_\ttP \calO$
are canonically isomorphic via the correspondence
\[
U_c \longleftrightarrow 1_c \in \calO(U_c)
\]
of the chosen local terminal objects.
We use this isomorphism to identify $\pi_0(\ttP)$ with $\pi_0(\int_\ttP \calO)$.
Under this convention, the sets $\pi_0(s(T))$ of connected
components of the sources of an object
$T \in \ttP$ and the sets $\pi_0(s(t))$ 
of $t \in \calO(T)$ representing an object of
$\int_\ttP 
\calO$ are the same, and similarly $\pi_0(T) = \pi_0(t)$. The structure
operations of an $\calO$-algebra are by 
Definition~\ref{Jaruska_ma_chripecku}\begin{subequations}
\begin{equation}
\label{Dnes_vecer_s_Jarkou}
\alpha_T : \calO(T)\times\Cross{c \in \pi_0(s(T))} A_c  \longrightarrow
A_{\pi_0(T)}, \ T \in \ttP,
\end{equation}
which can be interpreted as 
families
\[
\alpha_t : \Cross{c \in \pi_0(s(T))}  A_c  \longrightarrow
A_{\pi_0(T)}, \ t \in \calO(T), \ T \in \ttP,
\]
of maps parametrized by $t \in \calO(T)$. Using the above
identifications, we rewrite the above display as 
\begin{equation}
\label{do_francouzske_restaurace.}
\alpha_t : \Cross{c \in \pi_0(s(t))} A_c  \longrightarrow
A_{\pi_0(t)}, \ t \in \calO(T), \ T \in \ttP,
\end{equation}
\end{subequations}
which are precisely the structure operations of an
$\Groterm$-algebra. It is simple to verify that the correspondence
between~(\ref{Dnes_vecer_s_Jarkou})
and~(\ref{do_francouzske_restaurace.})  
extends to an isomorphism of the categories of algebras.
\end{proof}

\subsection{Discrete operadic opfibrations}
In Subsection~\ref{snad_mne_kotnik_prestane_bolet} we recalled how
set-valued operads produce discrete operadic fibrations.
We are going to present a dual construction for cooperads. 

The notion of a cooperad over an operadic category is obtained from that of
an operad by reversing the arrows. 
A set-valued {\em $\ttP$-cooperad\/} is thus a collection  \hbox{$\oC = 
\{\oC(T)\}_{T\in \ttP}$} of sets together with structure maps
\begin{equation}
\label{za_chvili_na_ortopedii_s_patou}
\Delta_f: \oC(T) \longrightarrow    \oC(S) \times \oC(F_1)\times 
\cdots \times \oC(F_s)
\end{equation}
defined for an arbitrary $f :T \to S$ with fibers $\Rada F1s$.
The r\^ole of counits is played by the unique maps 
  \[
\oC(U_c) \to *,\ c \in \pi_0(\ttO),
  \]
to a terminal one-point set $*$. These operations are required to
satisfy axioms dual to those \hbox{in~\cite[Definition~1.11]{duodel}}.

A set-valued $\ttP$-cooperad $\oC$ leads to an operadic
category $\int^\ttP \hskip -.2em \oC$ via a dual version of the Grothen\-dieck construction
recalled in
Subsection~\ref{snad_mne_kotnik_prestane_bolet}.
The objects of $\int^\ttP \hskip -.2em  \oC$ are pairs $(T,t),$ where  $T \in \ttP$ and $t \in \oC(T).$  A morphism $\sigma : (T,t)\to (S,s)$  is a morphism $f : T \to S$ in $\ttP$ 
such that
\[
\Delta_f(t) = (s,\varepsilon)
\]
for some, necessarily unique,
$\textstyle\varepsilon \in \prod_{i \in |S|}
\oC\big(f^{-1}(i)\big)$,
where $\Delta_f$ is the structure map~(\ref{za_chvili_na_ortopedii_s_patou}).   

The category $\int^\ttP \hskip -.2em  \oC$ 
is an operadic category equipped with a~functor $p :
\int^\ttP \hskip -.2em  \oC \to \ttP$ defined by~(\ref{bude_205?}).
The trivial objects are all 
objects of the form $u \in \oC(U_c)$,  $c \in\pi_0(\ttP)$.
It turns out that the functor  $p :
\int^\ttP \hskip -.2em  \oC \to \ttP$ is a standard discrete opfibration: 

\begin{definition}
\label{zas_mne_boli_zapesti}
 A {\it discrete operadic
    opfibration} is an operadic  functor $p:\ttO\to \ttP$ which, as a functor, is a discrete opfibration.  That is, 
for any morphism $f : T\to S$ in $\ttP$ and any
  $t \in \ttO$ such that
  $  p(t) = T,$
  there exists a~unique $\sigma : t\to s$ in $\ttO$ such that
  $p(\sigma) = f$.
\end{definition}

Dualizing the steps in the proof of~\cite[Proposition~2.5]{duodel} one
can show that the dual Grothendieck 
construction is an equivalence between the category
of set-valued   $\ttP$-cooperads  and the category of discrete operadic 
opfibrations over $\ttP$.
As the following statement shows, discrete operadic opfibrations behave nicely with
respect to  chosen local terminal  objects.

\begin{lemma}
\label{Mam_tendinopatii.}
Operadic functors preserve local terminal objects. If $p : \ttO \to \ttP$ is a discrete operadic opfibration, then $t \in \ttO$ is
trivial  if and only if $p(t)$ is trivial. \end{lemma}

\begin{proof} Operadic functors send trivial objects to trivial objects. 
  Let $t$ be a local terminal in $\ttO$ and let $!:t\to U$ be the unique isomorphism to a trivial object. Then $p(!):p(u)\to P(U)$ is an isomorphism to a trivial object, and hence $p(t)$ is a local terminal.  

Suppose that $p$ is a discrete operadic opfibration. For $t\in \ttO$ let $!:t\to U$ be the unique map to a trivial object.  If $p(t)$ is trivial in $\ttP$, the map  $p(!):  p(t) \to p(U)$
is the identity, so
its lifts $!$ and $\id_t$ are two lifts of the identity $\id_{p(t)}$ with 
the common domain $t.$ Hence, $! = \id_t.$
\end{proof}

The next property of opfibrations has to be compared to
Remark~\ref{Koupil_jsem_si_silikonove_podpatenky.}. 

\begin{lemma}
Let $p : \ttO \to \ttP$ be a discrete operadic opfibration. 
If the unique fiber condition holds in $\ttP$, then it also holds in $\ttO$.
\end{lemma}

\begin{proof}
Suppose we have a situation $T \FIB T \to t$ in $\ttO$, with $t$
local terminal. By the first part of Lemma~\ref{Mam_tendinopatii.}, we have
$p(T) \FIB p(T) \to p(t)$ in $\ttP$ with $p(t)$ local
terminal. By the unique fiber condition for $\ttP$, $p(t)$ is a chosen
local terminal object in $\ttP$, so $t$ is a chosen local terminal object in
$\ttO$ by the second part of
Lemma~\ref{Mam_tendinopatii.}. 
\end{proof}

It turns out that analogs of 
Lemmas~\ref{noha_stale_boli},~\ref{kotnik} and
Propositions~\ref{Krtek_s_Laurinkou},~\ref{Vyhodil_jsem_sluchatka.}
hold also for discrete operadic opfibrations. As an example, we prove the
following variant of Lemma~\ref{noha_stale_boli}.

\begin{lemma} 
Let $p:\ttO\to\ttP$ be a discrete operadic opfibration and  $f:T\stackrel\sim\to
S$  a \qb\ in $\ttP$. Let $t\in \ttO$ be such that
$p(t)=T$. Then there exists a unique \qb\ $\sigma$ in~$\ttO$ 
with domain $t$ such that $p(\sigma) = f$.
\end{lemma}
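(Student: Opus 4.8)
The plan is to obtain the lift $\sigma$ directly from the defining property of a discrete opfibration, and then to check separately that this lift is a \qb. Since $p(t) = T$ and $f : T \to S$ is a morphism of $\ttP$, clause (ii) of Definition~\ref{zas_mne_boli_zapesti} furnishes a unique $\sigma : t \to s$ in $\ttO$ with $p(\sigma) = f$. This single application settles both existence and uniqueness of a lift of $f$; in particular any \qb\ lifting $f$ must coincide with this $\sigma$ by uniqueness, so only one thing remains, namely to show that $\sigma$ itself is a \qb.

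Because $p$ is a strict operadic functor it commutes with the formation of fibers, so $p\big(\sigma^{-1}(i)\big) = f^{-1}(i)$ for every $i \in |S|$. As $f$ is a \qb\ in $\ttP$, each $f^{-1}(i)$ is a chosen local terminal object $U_{d_i}$. Thus the proof reduces to the following sub-claim, which is the opfibration counterpart of the trivial-object bijection exploited in the proof of Lemma~\ref{noha_stale_boli}: \emph{in a discrete opfibration, every object lying over a chosen local terminal object is itself chosen local terminal}. Granting this, each $\sigma^{-1}(i)$ is trivial and $\sigma$ is a \qb, which finishes the argument.

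To prove the sub-claim I would argue as follows. Let $x \in \ttO$ satisfy $p(x) = U_c$ with $U_c$ chosen local terminal in $\ttP$, let $U_e$ be the chosen local terminal object of the connected component of $x$, and let $g : x \to U_e$ be the unique morphism guaranteed by terminality. Applying $p$ gives a morphism $p(g) : U_c \to p(U_e)$ between two chosen local terminal objects of $\ttP$; since they lie in a common component they are equal, and a local terminal object admits only its identity endomorphism, so $p(g) = \id_{U_c}$. Now both $g$ and $\id_x$ are lifts of $\id_{U_c}$ starting at $x$, whence the uniqueness in clause (ii) of Definition~\ref{zas_mne_boli_zapesti} forces $g = \id_x$ and, comparing codomains, $x = U_e$. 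Hence $x$ is chosen local terminal, as claimed.

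The main obstacle is precisely this upgrade. In the fibration case (Lemma~\ref{noha_stale_boli}) one may \emph{prescribe} the fibers of the lift to be trivial objects, so triviality is built into the construction; for an opfibration the lifting procedure gives no control over the fibers of $\sigma$, and one must instead deduce a posteriori that an object over a chosen local terminal object is chosen local terminal. Lemma~\ref{Mam_tendinopatii.} already supplies this passage once local terminality of the fiber is known, but the uniqueness-of-lift trick above delivers \emph{chosen} local terminality in one stroke, which is exactly what the definition of a \qb\ requires.
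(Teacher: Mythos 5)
Your proof is correct, and its skeleton coincides with the paper's: the paper likewise obtains $\sigma$ as the unique lift supplied by clause~(ii) of Definition~\ref{zas_mne_boli_zapesti} (which settles uniqueness at once), and then observes that, $p$ being an operadic functor, $p\big(\sigma^{-1}(i)\big) = f^{-1}(i)$ is trivial for each $i \in |S|$. The divergence lies in how triviality is transported back up to $\ttO$: the paper simply cites the second part of Lemma~\ref{Mam_tendinopatii.}, whereas you re-prove the needed implication from scratch as your sub-claim that any object lying over a chosen local terminal object is itself chosen local terminal. Your argument for that sub-claim is moreover genuinely different from the paper's proof of Lemma~\ref{Mam_tendinopatii.}: instead of lifting the map $!_U$ from $\ttP$ (where the paper asserts, rather than derives, that the lift's codomain is a chosen local terminal object of $\ttO$), you start from the canonical morphism $g : x \to U_e$ furnished by terminality \emph{inside} $\ttO$, note that $p(g)$ is an endomorphism of a chosen local terminal object of $\ttP$ -- using that strict operadic functors preserve chosen local terminals and that each component has exactly one -- hence equals the identity, and then conclude $g = \id_x$ from the uniqueness clause of the opfibration. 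This buys self-containment and delivers \emph{chosen} (not merely local) terminality of the fibers directly, which is exactly what the definition of a \qb\ demands; the paper's route is shorter on the page but delegates precisely that point to Lemma~\ref{Mam_tendinopatii.}. Since both proofs rest on the same two inputs -- fiber preservation by operadic functors and unique lifting -- the difference is one of bookkeeping rather than substance, with your version arguably tightening a step the paper leaves implicit.
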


\begin{proof}
By the lifting property of opfibrations, $f$  lifts to a unique
$\sigma$ so we only need to prove that $\sigma$ is a \qb. Since $p$ is
an operadic functor, it maps the fibers of $\sigma$ to the fibers of
$f$. Since the latter are trivial in $\ttP$, the former must be
trivial in $\ttO$ by Lemma~\ref{Mam_tendinopatii.}. So $\sigma$ is a \qb.
\end{proof}

An analog of Lemma~\ref{kotnik} for a discrete operadic opfibration
$p:\ttO\to\ttP$ reads as follows:

\begin{lemma} 
Let $p:\ttO\to\ttP$ be a discrete operadic opfibration. If all 
quasibijections  in $\ttP$ are invertible, then 
 all \qb{s} in $\ttO$ are also invertible. Moreover, for
each \qb\ $f:T\stackrel\sim\to S$ in $\ttP$ and $s\in\ttO$
  such that $p(s)=S$, there exists a unique quasibijection
  $\sigma:t\stackrel\sim\to s$ such that $p(\sigma)=f$.
\end{lemma}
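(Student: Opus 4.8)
The plan is to follow the template of Lemma~\ref{kotnik}, replacing the lifting property of a discrete operadic fibration by that of a discrete opfibration (Definition~\ref{zas_mne_boli_zapesti}(ii)), which lifts a morphism \emph{out of} a prescribed source object rather than \emph{into} a prescribed target. I would prove the two assertions in turn, invertibility first and the target-prescribed lifting second, the first feeding into the uniqueness part of the second.

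For invertibility, let $\sigma : t \to s$ be a \qb\ in $\ttO$. Then $p(\sigma)$ is a \qb\ in $\ttP$, hence invertible by hypothesis; call its inverse $g$. Since $g$ has source $p(s)$ and $p(s)=p(s)$, the opfibration lifts $g$ uniquely to some $\eta : s \to t'$ with $p(\eta)=g$ and $p(t')=p(t)$. Both composites $\eta\circ\sigma$ and $\sigma\circ\eta$ cover identity morphisms of $\ttP$, and since the unique lift of an identity out of a fixed object is the identity of that object, I obtain $\eta\circ\sigma=\id_t$ (so that $t'=t$) and $\sigma\circ\eta=\id_s$. Thus $\sigma$ is invertible. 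This argument uses only the opfibration axiom together with invertibility in $\ttP$, and never the fact that an inverse of a \qb\ is again a \qb.

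For the second assertion, given a \qb\ $f : T \to S$ in $\ttP$ and $s\in\ttO$ with $p(s)=S$, I would let $g:S\to T$ be the inverse of $f$ and lift $g$ out of $s$ to a unique $\eta : s \to t$ with $p(\eta)=g$; then automatically $p(t)=T$. I then lift $f$ out of $t$: by the preceding lemma this unique lift $\sigma : t \to s'$ is itself a \qb. As before, $\eta\circ\sigma$ and $\sigma\circ\eta$ cover identities and hence are identities, which forces $s'=s$ and identifies $\sigma$ as the inverse of $\eta$; so $\sigma:t\to s$ is a \qb\ lifting $f$. For uniqueness I would take any \qb\ $\sigma':t'\to s$ with $p(\sigma')=f$; it is invertible by the first part, its inverse covers $g$ and starts at $s$, so by the uniqueness in the opfibration axiom this inverse must equal $\eta$, whence $t'=t$ and $\sigma'=\sigma$.

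The main obstacle, and the reason the hypothesis on $\ttP$ is indispensable, is the mismatch of directions: the opfibration lifts only with a prescribed \emph{source}, whereas the second assertion prescribes the \emph{target} $s$. Routing the lift through the inverse $g$ converts the target constraint into a source constraint, and the delicate points are then to check that the lift $\sigma$ of $f$ really is a \qb\ (supplied by the preceding lemma) and that the uniqueness of identity lifts welds $\eta$ and $\sigma$ into genuine mutual inverses. Arranged this way, no appeal to the unique fiber axiom is needed, in contrast to a literal transcription of the proof of Lemma~\ref{kotnik}.
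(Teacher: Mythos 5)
Your proof is correct. Note first that the paper gives no proof of this lemma at all: it is explicitly left as an exercise (``We leave the proof of this lemma as an exercise\dots''), so the only benchmark is the fibration analogue, Lemma~\ref{kotnik}, whose argument you dualize faithfully and accurately -- lift the inverse $g=f^{-1}$ out of the prescribed object, use the uniqueness of lifts of identity morphisms to weld the two lifts into mutual inverses, and derive uniqueness of $\sigma$ from the uniqueness clause of Definition~\ref{zas_mne_boli_zapesti}(ii) applied to $\sigma'^{-1}$. Your closing observation is also right and is the one genuinely non-mechanical point of the dualization: in the fibration case the lift of $g$ must be produced by Lemma~\ref{noha_stale_boli}, which applies only to \qb{s}, so the proof of Lemma~\ref{kotnik} is forced to invoke Lemma~\ref{Kdy_konecne_zas_budu_mit_od_doktoru_pokoj?} (hence the unique fiber axiom) to know that $g$ is a \qb; by contrast, the opfibration axiom lifts an \emph{arbitrary} morphism out of a prescribed source, so no appeal to \UFB\ is needed. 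You are likewise careful at the one place where a sloppy transcription would go wrong: you never claim that $g$ or its lift $\eta$ is a \qb\ (which is not available without \UFB), but instead obtain the \qb-ness of $\sigma$ from the preceding lemma applied to the lift of the genuine \qb\ $f$ out of $t$, and then identify $\sigma=\eta^{-1}$ by the identity-lift argument.
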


We leave the proof of this lemma as an exercise, as well as the
verification that
Propositions~\ref{Krtek_s_Laurinkou} and~\ref{Vyhodil_jsem_sluchatka.}
hold verbatim for discrete operadic opfibrations as well.

\begin{example}
\label{Zaletame_si_jeste_do_konce_Safari?}
In Example~\ref{musim_na_ten_odber} we constructed the operadic
category  $\Grc$ of connected ordered graphs.
We introduce a set-valued $\Grc$-cooperad $G$ as follows.
For $\Gamma = (V,F) \in \Grc$ we put
\[
G(\Gamma) := {\rm Map}(V,\bbN) = \{g(v) \in \bbN\ | \ v \in V\}.
\]
The cooperad structure operations
\[
\Delta_\Phi : G(\Gamma') \longrightarrow G(\Gamma'') \times
G(\Gamma_1) \times \cdots \times G(\Gamma_s)
\] 
are, for a map
$\Phi : \Gamma' = (V',F') \to \Gamma'' = (V'',F'')$ with
fibers $\Gamma_i = (V_i,F_i)$ over $i \in V''$, given as
$\Delta_\Phi(g') := (g'',g_1,\ldots,g_s)$, where $g_i$ is  the restriction
of $g'$ to $V_i \subset V'$~and
\[
g''(i) := 
\sum_{v \in V_i}g_i(v) + \dim\big(H^1(\mathrm{B}(\Gamma_i); {\mathbb Z})\big),
\ i \in V'',
\] 
where $\mathrm{B}(\Gamma_i)$ is the  geometric realization of $\Gamma_i$.

The Grothendieck construction applied to $G$ produces the operadic
category $\ggGrc$ of {\em genus-graded\/} connected ordered graphs.  The
  morphisms in this category coincide with the morphisms of graphs as
  introduced in~\cite[Section~2]{getzler-kapranov:CompM98}, 
modulo the orders which we used to
  make $\ggGrc$ an operadic category.
\end{example}

\begin{example}
\label{Pani_Bilkova_zatim_neudelala_rezrvaci.}
We say that an ordered graph $\Gamma \in \Gr$ is {\em oriented\/} if
\begin{itemize}
\item[(i)]
each internal edge in $\Gamma$ is oriented, 
meaning that one of the half-edges forming this edge is 
marked as the input one, and the other as the output, and
\item[(ii)]
also the legs of $\Gamma$ are marked as either input or output ones.
\end{itemize}
We will call the above data an {\em orientation\/}  and
denote the set of all orientations of $\Gamma$ by~$\Or(\Gamma)$.
It is easy to see that $\Or$ is a cooperad over $\Grc$. The 
operadic category $\Whe$ resulting from the Grothendieck construction 
applied to $\Or$ consists of {\em oriented} ordered connected
graphs. We choose the notation $\Whe$  because algebras of the terminal
$\Whe$-operads are the
{\em wheeled PROPs\/} introduced in~\cite{mms}.   
\end{example}

\begin{example}
\label{Travelodge}
Let $C$ be an obvious modification of the operad of
Example~\ref{Zkusim_zavolat_Sehnalovi.} to the category $\Whe$. The
Grothendieck construction associated to this modified $C$ 
produces the operadic category
$\Dio$ of {\em simply-connected\/} oriented ordered graphs. 
The notation expresses that algebras of the terminal $\Dio$-operad 
are {\it dioperads}~\cite{gan}. 
\end{example}

The {\em valency\/} of a vertex $u$ in a graph $\Gamma$ is the number of
half-edges adjacent to $u$. For any $v \geq 2$, all operadic
categories mentioned above that consist of simply
connected graphs, i.e.~$\Tr$, $\PTr$, $\RTr$, $\PRTr$ and $\Dio$,
possess full operadic subcategories $\Tr_v$, $\PTr_v$, $\RTr_v$,
$\PRTr_v$ and $\Dio_v$ of graphs all of whose vertices have
valency $\geq v$.

\begin{example}
\label{Zrejme_brzy_podlehnu.}
We call an ordered simply-connected graph $\Gamma \in \Dio$ a
{\em $\frac12$graph\/} if each 
internal edge $e$ of $\Gamma$ satisfies the
following condition: \hfill\break 
\hphantom{m} $\bullet$ either 
$e$ is the unique outgoing edge of its initial
vertex, or \hfill\break
\hphantom{m} $\bullet$ $e$ 
is the unique incoming edge of its terminal vertex. \hfill\break 
Edges allowed in a $\frac12$graph 
are portrayed in the picture:
\[
\scalebox{0.7} 
{
\begin{pspicture}(0,-2.42)(6.66,2.42)
\psdots[dotsize=0.2](1.2,0.88)
\psline[linewidth=0.04cm,arrowsize=0.05291667cm 2.0,arrowlength=1.4,arrowinset=0.4]{->}(1.2,-0.9)(1.2,0.76)
\psline[linewidth=0.04cm,arrowsize=0.05291667cm 2.0,arrowlength=1.4,arrowinset=0.4]{->}(1.2,1.0)(1.2,2.4)
\psline[linewidth=0.04cm,arrowsize=0.05291667cm 2.0,arrowlength=1.4,arrowinset=0.4]{->}(1.24,0.96)(1.82,2.36)
\psline[linewidth=0.04cm,arrowsize=0.05291667cm 2.0,arrowlength=1.4,arrowinset=0.4]{->}(1.24,0.92)(2.32,1.92)
\psline[linewidth=0.04cm,arrowsize=0.05291667cm 2.0,arrowlength=1.4,arrowinset=0.4]{->}(1.22,0.9)(0.08,1.9)
\psline[linewidth=0.04cm,arrowsize=0.05291667cm 2.0,arrowlength=1.4,arrowinset=0.4]{->}(1.2,0.92)(0.52,2.32)
\psline[linewidth=0.04cm,arrowsize=0.05291667cm 2.0,arrowlength=1.4,arrowinset=0.4]{->}(1.92,-0.46)(1.26,0.76)
\psline[linewidth=0.04cm,arrowsize=0.05291667cm 2.0,arrowlength=1.4,arrowinset=0.4]{->}(2.36,0.12)(1.28,0.82)
\psline[linewidth=0.04cm,arrowsize=0.05291667cm 2.0,arrowlength=1.4,arrowinset=0.4]{->}(0.52,-0.48)(1.14,0.78)
\psline[linewidth=0.04cm,arrowsize=0.05291667cm 2.0,arrowlength=1.4,arrowinset=0.4]{->}(0.0,0.12)(1.1,0.84)
\psdots[dotsize=0.2](1.2,-0.88)
\psline[linewidth=0.04cm,arrowsize=0.05291667cm 2.0,arrowlength=1.4,arrowinset=0.4]{->}(1.18,-2.4)(1.2,-1.0)
\psline[linewidth=0.04cm,arrowsize=0.05291667cm 2.0,arrowlength=1.4,arrowinset=0.4]{->}(1.92,-2.22)(1.26,-1.0)
\psline[linewidth=0.04cm,arrowsize=0.05291667cm 2.0,arrowlength=1.4,arrowinset=0.4]{->}(2.38,-1.64)(1.3,-0.94)
\psline[linewidth=0.04cm,arrowsize=0.05291667cm 2.0,arrowlength=1.4,arrowinset=0.4]{->}(0.52,-2.24)(1.14,-0.98)
\psline[linewidth=0.04cm,arrowsize=0.05291667cm 2.0,arrowlength=1.4,arrowinset=0.4]{->}(0.0,-1.64)(1.1,-0.92)
\usefont{T1}{ptm}{m}{n}
\rput(1.3562304,-0.155){e}
\psdots[dotsize=0.2](5.46,0.88)
\psline[linewidth=0.04cm,arrowsize=0.05291667cm 2.0,arrowlength=1.4,arrowinset=0.4]{->}(5.46,1.0)(5.46,2.4)
\psline[linewidth=0.04cm,arrowsize=0.05291667cm 2.0,arrowlength=1.4,arrowinset=0.4]{->}(5.5,0.96)(6.08,2.36)
\psline[linewidth=0.04cm,arrowsize=0.05291667cm 2.0,arrowlength=1.4,arrowinset=0.4]{->}(5.5,0.92)(6.58,1.92)
\psline[linewidth=0.04cm,arrowsize=0.05291667cm 2.0,arrowlength=1.4,arrowinset=0.4]{->}(5.48,0.9)(4.34,1.9)
\psline[linewidth=0.04cm,arrowsize=0.05291667cm 2.0,arrowlength=1.4,arrowinset=0.4]{->}(5.48,0.86)(4.78,2.32)
\psline[linewidth=0.04cm,arrowsize=0.05291667cm 2.0,arrowlength=1.4,arrowinset=0.4]{->}(5.36,-0.82)(4.3,-0.2)
\psline[linewidth=0.04cm,arrowsize=0.05291667cm 2.0,arrowlength=1.4,arrowinset=0.4]{->}(5.56,-0.82)(6.52,-0.22)
\psline[linewidth=0.04cm,arrowsize=0.05291667cm 2.0,arrowlength=1.4,arrowinset=0.4]{->}(5.46,-0.9)(5.46,0.76)
\psdots[dotsize=0.2](5.46,-0.88)
\psline[linewidth=0.04cm,arrowsize=0.05291667cm 2.0,arrowlength=1.4,arrowinset=0.4]{->}(5.46,-2.4)(5.46,-1.0)
\psline[linewidth=0.04cm,arrowsize=0.05291667cm 2.0,arrowlength=1.4,arrowinset=0.4]{->}(6.18,-2.22)(5.52,-1.0)
\psline[linewidth=0.04cm,arrowsize=0.05291667cm 2.0,arrowlength=1.4,arrowinset=0.4]{->}(6.64,-1.64)(5.56,-0.94)
\psline[linewidth=0.04cm,arrowsize=0.05291667cm 2.0,arrowlength=1.4,arrowinset=0.4]{->}(4.78,-2.24)(5.4,-0.98)
\psline[linewidth=0.04cm,arrowsize=0.05291667cm 2.0,arrowlength=1.4,arrowinset=0.4]{->}(4.26,-1.64)(5.36,-0.92)
\psline[linewidth=0.04cm,arrowsize=0.05291667cm 2.0,arrowlength=1.4,arrowinset=0.4]{->}(5.48,-0.8)(6.16,0.12)
\psline[linewidth=0.04cm,arrowsize=0.05291667cm 2.0,arrowlength=1.4,arrowinset=0.4]{->}(5.42,-0.78)(4.7,0.08)
\usefont{T1}{ptm}{m}{n}
\rput(5.6762304,0.245){e}
\end{pspicture} 
}
\]
borrowed from~\cite{markl:handbook}. 
For $\Gamma \in \Dio$, let us define
\[
\textstyle
\mathtt{\frac12}(\Gamma) := 
\begin{cases}
1 & \hbox {if  $\Gamma$ is a $\frac12$graph}
\\
\emptyset   & \hbox {otherwise.}
\end{cases}
\]
It is easy to verify that the restriction of
$\mathtt{\frac12}$ to
$\Dio_3 \subset \Dio$ is an operad.
The Grothendieck construction applied to   
$\mathtt{\frac12}$ produces the operadic
category $\hGr_3$ of $\frac12$graphs whose vertices have valency 
$\geq 3$. Algebras for the terminal $\hGr_3$-operad are $\frac12$PROPs
as \hbox{in~\cite[Definition~4]{markl:ba}}.
\end{example}

The constructions above are summarized in the diagram
\begin{equation}
\label{Dnes_jsme_byli_na_vylete_u_more.}
\xymatrix@R=1em{&\RTr\ar[dr]^\FB &&& \ar[d]^\OF\ggGrc&&
\\
\PRTr   \ar[ur]^\FB\ar[dr]^\FB &&\ar@{^{(}->}[rr]^\FB \Tr\, 
&& \ar@{^{(}->}[rr]^\FB\Grc\, && \Gr
\\
&\PTr\ar[ur]^\FB &&&\ar[u]_\OF\Whe&\Dio\ar@{_{(}->}[l]_\FB&\, \Dio_3
\ar@{_{(}->}[l]_(.55)\OF&\hGr_3\ar@{_{(}->}[l]_(.52){\BF} 
}
\end{equation}
in which \FB\ denotes fibrations, \OF\ opfibrations and \BF\ the
inclusion that is both a fibration and an opfibration.

\begin{example}
The inclusion $\hGr_3 \to \Dio_3$ is a discrete operadic
fibration. The same is however not true for the inclusion  $\hGr \to
\Dio$ of the categories of graphs with vertices of arbitrary
valencies. While condition~(i) of  
Lemma~\ref{Vcera jsme byli s Jarkou na nakupech.} is satisfied,
condition~(ii) is violated e.g.~by the map $f:S \to T$ depicted below
\newgray{kgray}{.5}
\[
\psscalebox{1.0 1.0} 
{
\begin{pspicture}(0,-0.77539796)(6.6032753,0.77539796)
\psline[linecolor=black, linewidth=0.03, arrowsize=0.05291667cm 2.0,arrowlength=3,arrowinset=0.0]{->}(1.9016376,-0.37539795)(2.5016375,0.6246021)
\psline[linecolor=black, linewidth=0.03, arrowsize=0.05291667cm 2.0,arrowlength=3,arrowinset=0.0]{->}(3.1016376,-0.37539795)(2.5016375,0.6246021)
\psdots[linecolor=black, dotsize=0.20625](3.7016375,0.6021)
\psdots[linecolor=black, dotsize=0.20625](3.1016376,-0.37539795)
\psdots[linecolor=black, dotsize=0.20625](1.9016376,-0.37539795)
\psdots[linecolor=black, dotsize=0.20625](2.5016375,0.6246021)
\rput[bl](2.9016376,0.22460204){\scriptsize $e$}
\psline[linecolor=black, linewidth=0.03, arrowsize=0.05291667cm 2.0,arrowlength=3,arrowinset=0.0]{->}(3.1016376,-0.37539795)(3.7016375,0.6246021)
\rput[bl](4.1016374,0){\large $\stackrel f \longrightarrow$}
\psline[linecolor=black, linewidth=0.03, arrowsize=0.05291667cm 2.0,arrowlength=3,arrowinset=0.0]{->}(5.9016376,-0.37539795)(5.3016376,0.6246021)
\psdots[linecolor=kgray, dotsize=0.20600587](5.3016376,0.6246021)
\psline[linecolor=black, linewidth=0.03, arrowsize=0.05291667cm 2.0,arrowlength=3,arrowinset=0.0]{->}(5.9016376,-0.37539795)(6.5016375,0.6246021)
\psdots[linecolor=black, dotsize=0.20625](5.9016376,-0.37539795)
\psdots[linecolor=black, dotsize=0.20625](6.5016375,0.6246021)
\psline[linecolor=black, linewidth=0.03, arrowsize=0.05291667cm 2.0,arrowlength=3,arrowinset=0.0]{->}(0.10163757,-0.37539795)(0.70163757,0.6246021)
\psdots[linecolor=black, dotsize=0.20625](0.70163757,0.6246021)
\psdots[linecolor=black, dotsize=0.20625](0.10163757,-0.37539795)
\rput[bl](1.376,0.02460205){$\fib$}
\rput{-32.594162}(0.3773987,1.2359951){\psellipse[linecolor=black, linewidth=0.03, linestyle=dashed, dash=0.17638889cm 0.10583334cm, dimen=outer](2.1016376,0.02460205)(0.4,1.0)}
\psline[linecolor=black, linewidth=0.03, arrowsize=0.05291667cm 2.0,arrowlength=3,arrowinset=0.0]{->}(1.16375,-0.07539795)(0.70163757,0.6246021)
\rput[t](0.70163757,-0.77539796){$F$}
\rput[t](2.7016375,-0.77539796){$S$}
\rput[t](5.9016376,-0.77539796){$T$}
\end{pspicture}
}
\]
given by contracting  the subgraph in the dashed oval to the
gray vertex of the rightmost graph. Both the
target $T$ and the fiber $F$ are $\frac12$-graphs, but the domain $S$ is
not, since it contains the edge $e$ that is neither the unique outgoing,
nor the unique incoming one.
The inclusion $\hGr_3 \to \Dio$ is not a discrete operadic fibration
either; this time it is item~(i)~of  
Lemma~\ref{Vcera jsme byli s Jarkou na nakupech.} that is
violated.
\end{example}

\begin{example}
Let $\Ord$ be the  subcategory of $\Surj$ consisting of
order-preserving surjections. It is an operadic category
whose operads are the classical constant-free
non-symmetric
operads~\cite[Example~1.15]{duodel}. 
One has the
$\Ord$-cooperad $\Su$ with components
\[
\Su(\bar n) := \coprod_{m \geq n}\Sur(\bar m,\bar
n), \ n \geq 1,
\]
where $\Sur(\bar m,\bar n)$ denotes the set of all (not necessarily
order-preserving) surjections.
Its structure map $\Delta_f
: \Su(\bar n) \to \Su(\bar s) \times \Su(\inv f (1)) \times  \cdots
\times  \Su(\inv f (s))$ is, for $f : \bar n \to \bar s$, 
given by
\[
\Delta_f(\alpha) := \beta \times \alpha_1 \times \cdots \times
\alpha_s,\ 
\alpha \in \Su(\bar n),
\] 
where $\beta:= f \kompozice \alpha$ and $\alpha_i : (f\alpha)^{-1}(i) \to
f^{-1}(i)$ is the restriction of $\alpha$, $i \in \bar s$. The
Grothendieck construction of the cooperad $\Su$ leads to the operadic category
$\Per$ 
whose operads are
permutads,
introduced in~\cite{markl:perm}.
\end{example}

\begin{table}[t]
\def\arraystretch{1.3}
\begin{center}
  \begin{tabular}{|l|l|l|} 
\hline  \multicolumn{1}{|c|}{{property of $\ttP$}}
& \multicolumn{1}{c|}{{$p$ is fibration then $\ttO$ satisfies}} &
\multicolumn{1}{c|}{{$p$ is opfibration then $\ttO$ satisfies}}
\\
\hline \hline 
\Fac   & ? & \Fac
\\
\hline
\Fac\ \& \QBI  & \Fac\ \& \QBI  &  \Fac\ \& \QBI 
\\
\hline
\SBU & \SBU & \SBU
\\
\hline
\UFB & ? & \UFB
\\
\hline
\Rig & ? & \Rig
\\
\hline
$\ttP$ is  graded & $\ttO$ is graded & $\ttO$ is graded
\\
\hline
 \SGrad & ? & 
 \SGrad
\\
\hline 
\end{tabular}
\\
\rule{0em}{.8em}
\end{center}
\caption{How discrete operadic (op)fibrations $p:\ttO\to \ttP$ interact with properties of operadic categories.}
\label{analogies}
\end{table}

The leftmost column of  Table~\ref{analogies} lists properties
required in the second paper of the series~\cite{sydney2}. 
Its top four rows record results obtained in this section. The 5th and 6th rows
easily follow from the uniqueness of lifts in discrete operadic opfibrations,
while the grading in the last row is given by
formula~(\ref{Dnes_plynari_ale_ja_jsem_v_Australii.}) and does not 
require any additional assumptions on $p : \ttO \to \ttP$.

\begin{remark}
\label{Pujdu_zitra_na_seminar?}
If $p : \ttO \to \ttP$ is a discrete operadic opfibration and
$\ttP$ fulfills the properties listed in the leftmost column of
Table~\ref{analogies}, then $\ttO$ shares the same properties.
If  $p : \ttO \to \ttP$ is a~discrete operadic fibration, the situation in not
so simple. One may however invoke the
implication $\hbox {\rm {\UFB} \& {\WBU}} 
\Longrightarrow  \hbox {{\Rig}}$ of
Lemma~\ref{Podivam_se_do_Paramaty??} and conclude that if one
``manually'' verifies
$\UFB$ and the presence of a strict grading, then $\ttO$ satisfies all the
properties in the leftmost column also in the case of opfibrations.  
\end{remark}
\begin{example} According to Section \ref{Ceka_mne_Psenicka.} the operadic 
  category $\Gr$ satisfies  all properties in the leftmost column of Table~\ref{analogies} except for being strictly graded. Then the category $\Grc$ being fibered over $\Gr$  satisfies $\Fac,\QBI,\SBU.$ The fact that it satisfies $\UFB$ follows promptly from the fact that local terminals and chosen local terminals in $\Gr$ and $\Grc$ coincide and $\Gr$ satisfies $\UFB.$ Hence, $\Grc$ also satisfies $\Rig.$ The grading on $\Grc$ is strict, see Example \ref{grc is strict}. 
It now follows from  Example \ref{Zaletame_si_jeste_do_konce_Safari?} that the operadic category $\ggGrc$ is opfibered over $\Grc,$ and hence, satisfies all the properties listed in the rightmost column of the Table~\ref{analogies}. 

\end{example}

\section{Elementary morphisms} 
\label{Velikonoce s Jarkou na chalupe.}

While composition laws 
$\gamma_f: \oP(f) \otimes \oP(S)\to \oP(T)$  of
an operad~$\oP$ over an operadic category~$\ttO$ are associated to an
arbitrary morphism $f : T \to S$ in
$\ttO$, cf.~\cite[Definition~1.11]{duodel}, composition 
laws of Markl operads introduced in Section~\ref{section-markl} are associated to morphisms with only one nontrivial fiber. The precise
definition and properties of this class of morphisms are the subject
of this section. From now on all operadic categories will be {\em graded.}

\begin{definition}
\label{plysacci_postacci}
A morphism $\phi 
: T \to S \in \DO$ in an operadic category $\ttO$ 
is {\em elementary\/} if all its fibers
are trivial (= chosen local terminal) 
except precisely~one whose grade is $\geq 1$.
If~$\inv{\phi}(i)$ is, for  $i \in |S|$, the unique nontrivial fiber, we will
sometimes write $\phi$ as the pair $(\phi,i)$. If we want to name
the unique nontrivial fiber $F := \inv\phi( i)$ 
explicitly, we will write $F \fib_i T
\stackrel\phi\to S$, or  $F \fib T
\stackrel\phi\to S$ when the concrete $i \in |S|$ is not important.
\end{definition}

\noindent 
{\bf Notation.}
In the setup of Lemma~\ref{l3} with $\sigma$ a \qb, assume that the morphisms 
$f',f''$ are elementary, 
$\inv{f'}(a)$ is the only nontrivial fiber of $f'$, and $\inv{f''}(b)$ 
with $b:= |\sigma|(a)$ the only nontrivial fiber of $f''$. In this
situation, we denote by
\begin{equation}
\label{n1}
\overline \pi : = \pi_{(a,b)} :\inv{f'}(a) \to  \inv{f''}(b)
\end{equation}
the only nontrivial part of the derived sequence~(\ref{e1}).

\begin{remark}
\label{V_Srni_pred_Borovou_Ladou}
If $\pi$ is a \qb, the only nontrivial fiber of $f''$
{\em must be\/}  $\inv{f''}(b)$ with  $b:= |\sigma|(a)$. Indeed, the
maps in~(\ref{zitra_na_prohlidku_k_doktoru_Reichovi}) are \qb{s}, so
their fibers are, by definition, the chosen local terminal objects. When
$\inv{f''}(j)$ is the chosen local terminal object, then the (unique) fiber
of $\pi_{(i,j)}$ is  $\inv{f'}(i)$, so it must be, by Axiom~(iii) of an
operadic category, a~chosen local terminal
object too. 
\end{remark}

\begin{corollary}
\label{sec:sundry-facts-about-1}
Assume the  blow-up axiom and suppose that
in the corner for blow-up as on the left of the display
\begin{equation}
\label{cF}
\xymatrix@C=3.5em{S' \ar[d]_{f'}  &
\\
T' \ar[r]^\sigma_\sim &T''
}
 \  \  \  \  \  \  \   \  \  \  \  \  \  \ \xymatrix@C=3.5em{S' \ar[d]_{f'} \ar[r]^\pi  & S'' \ar[d]^{f''}
\\
T' \ar[r]^\sigma_\sim &T''
}
\end{equation}
 the map $f'$ is elementary, with the
unique nontrivial fiber  over $a \in |T'|$. Let $b := |\sigma|(a)$ and assume we
are given a map $\overline \pi : \inv{f'}(a) \to
F,$ where $e(F)\ge 1$. Then the corner in~(\ref{cF}) can be uniquely completed
into a commutative square as on the right
 in which $f''$ is elementary with the unique
nontrivial fiber $\inv{f''}(b) = F$ and the nontrivial part of the derived
sequence is $\overline \pi$.
\end{corollary}

\begin{proof}
By \SBU,~(\ref{eq:5}) is uniquely
determined by the maps between the fibers. The only map between nontrivial
fibers is $\overline \pi$, while 
all maps between trivial ones are unique by the terminality of trivial
objects, thus there is no room for
choices of the induced maps between fibers.
\end{proof}

\begin{definition}
\label{d3}
Let $T\stackrel{(\phi,j)}{\longrightarrow} S
\stackrel{(\psi,i)}{\longrightarrow} P$ be 
elementary morphisms. If  $|\psi|(j) = i$
we say that 
the fibers of $\phi$ and $\psi$ are {\em joint\/}.
If
 $|\psi|(j)\ne i$ we say that 
$\phi$ and $\psi$ have {\em disjoint fibers\/} or,  more
specifically, that the fibers of $\phi$ and $\psi$ are {\em
  $(i,j)$-disjoint\/}, cf.~the following picture.
\end{definition}

\newgray{kgray}{.9}
\[
\psscalebox{1.0 1.0} 
{
\begin{pspicture}(0,-2.8)(6.24,2.8)
\psline[linecolor=black, linewidth=0.01](0.31,-2.3375)(5.71,-2.3375)
\psline[linecolor=black, linewidth=0.01](0.31,0.0625)(5.71,0.0625)
\psline[linecolor=black, linewidth=0.01](5.71,2.4625)(5.71,2.4625)(0.31,2.4625)
\psline[linecolor=black, linewidth=0.03](0.71,2.4625)(1.71,0.0625)(2.71,2.4625)(2.71,2.4625)
\psline[linecolor=black, linewidth=0.03](1.71,0.0625)(1.71,-2.3375)
\psline[linecolor=black, linewidth=0.03](3.31,0.0625)(3.31,0.0625)(4.31,-2.3375)(5.31,0.0625)(5.31,2.4625)(5.31,2.4625)
\psline[linecolor=black, linewidth=0.03](3.31,0.0625)(3.31,2.4625)
\psellipse[fillcolor=kgray, linecolor=black, linewidth=0.03, fillstyle=solid, dimen=outer](1.71,2.4625)(1.0,0.4)
\rput(1.71,2.5){\scriptsize $\xi^{-1}(k)$}
\psellipse[fillcolor=kgray, linecolor=black, linewidth=0.03, fillstyle=solid, dimen=outer](4.31,2.4625)(1.0,0.4)
\psellipse[fillcolor=kgray, linecolor=black, linewidth=0.03, fillstyle=solid, dimen=outer](4.31,0.0625)(1.0,0.4)
\rput(4.3,2.5){\scriptsize $\xi^{-1}(i)$}
\rput(4.31,0.0625){\scriptsize $\psi^{-1}(i)$}
\psdots[linecolor=black, dotsize=0.14](1.71,0.0625)
\psdots[linecolor=black, dotsize=0.14](1.71,-2.3375)
\psdots[linecolor=black, dotsize=0.14](4.31,-2.3375)
\psline[linecolor=black, linewidth=0.03, arrowsize=0.05291666666666667cm 5.0,arrowlength=1.4,arrowinset=0.0]{->}(0.51,2.4625)(0.51,0.0625)
\psline[linecolor=black, linewidth=0.03, arrowsize=0.05291666666666667cm 5.0,arrowlength=1.4,arrowinset=0.0]{->}(0.51,0.0625)(0.51,-2.3375)
\psline[linecolor=black, linewidth=0.03, arrowsize=0.05291666666666667cm 5.0,arrowlength=1.4,arrowinset=0.0]{->}(4.11,2.0625)(4.11,0.4625)
\rput(6.11,2.4625){$T$}
\rput(6.11,0.0625){$S$}
\rput(6.11,-2.3375){$P$}
\rput(0.11,1.4625){$\phi$}
\rput(0.11,-0.9375){$\psi$}
\rput(4.51,1.2625){$\phi_i$}
\rput(2.11,0.3){$j$}
\rput(1.71,-2.7375){$k$}
\rput(4.31,-2.7375){$i$}
\rput(3.71,1.2625){$\sim$}
\end{pspicture}
}
\]

\begin{lemma}
\label{l7} 
If the
fibers of elementary morphisms $\phi$ and $\psi$ in
Definition~\ref{d3} are
joint, then the
composite $\xi = \psi(\phi)$ is elementary  as well,  with the
nontrivial fiber over $i$, and the induced morphism
$\phi_i:\xi^{-1}(i)\to \psi^{-1}(i) $ is elementary with the
nontrivial fiber over $j$ that equals $\phi^{-1}(j)$.  For $l\ne i$ the
morphism $\phi_l$ equals the identity $U_c\to U_c$ of trivial objects.

If the fibers of $\phi$ and $\psi$ are { $(i,j)$-disjoint} then the morphism
$\xi = \psi(\phi)$ has exactly two nontrivial fibers and these are
fibers over $i$ and $k: =|\psi|(j)$. Moreover, there is a canonical
induced quasibijection
\begin{subequations}
\begin{equation}
\label{harmonika}
\phi_i:\xi^{-1}(i)\to \psi^{-1}(i) \in \DO
\end{equation}
and we have the equality
\begin{equation}
\label{pisu_opet_v_Sydney}
\xi^{-1}(k) = \phi^{-1}(j).
\end{equation}
\end{subequations}
\end{lemma}

\begin{proof} 
By Axiom~(iv) of an operadic category, we have
$\phi_i^{-1}(j) = \phi^{-1}(j)$, thus $e(\phi_i^{-1}(k))\geq 1$. 
If  $k \in |\psi|^{-1}(i)$ is such
that  $k\ne j$, then
$\phi_i^{-1}(k) = \phi^{-1}(k) = U_c$.
Therefore $\phi_i$ is an elementary morphism.

Let us prove that $\xi$ is elementary as well. For $i =k\in |P|$, we
have $\phi_i: \xi^{-1}(i)\to \psi^{-1}(i)$, hence the grade of
$\xi^{-1}(i)$ must be greater than or equal to the grade of
$\phi_i^{-1}(j) = \phi^{-1}(j)$, which is greater than or equal to $1$.  For
$k\ne i$, $\phi_k:\xi^{-1}(k)\to \psi^{-1}(k) = U'$ has the unique
fiber equal to $\xi^{-1}(k).$ On the other hand for the unique $l$ such that
$|\psi|(l) = k$,
\[
\phi_k^{-1}(l) = \phi^{-1}(l) = U'',
\] 
hence $\xi^{-1}(k) = U''$, so $\xi$ is elementary.

Let us prove the second part of the lemma.
If $l\ne i,k$ then $\phi_l:\xi^{-1}(l)\to \psi^{-1}(l) =
 U'$, where $U'$ is a trivial object. So the unique fiber of $\phi_l$
equals $\xi^{-1}(l)$. Since $|\psi|$ is surjective,
there exists $l'\in |S|$
 such that $|\psi|(l') = l$, and such an $l'$ is unique because 
$\psi$ is elementary. Hence $\phi_l^{-1}(l') = \phi^{-1}(l')  = U'$
 and so $\xi^{-1}(l) = U'$. This proves that the only nontrivial
 fibers of $\xi$ can be those over $i$ and $k$. Their grades are
 clearly $\geq 1$.

Let us prove that $\phi_i$ is a quasibijection.  If $l\in
|\psi^{-1}(i)|$ then $\phi_i^{-1}(l) = \phi^{-1}(l).$
Since the fibers of $\phi$ and $\psi$ are $(i,j)$-disjoint by
assumption, we have $|\psi|(l) = i \not= |\psi|(j)$, hence $l \not=
j$. Since the only nontrivial fiber of $\phi$ is $\phi^{-1}(j)$, we
conclude that $\phi^{-1}(l)$ and therefore also $\phi_i^{-1}(l)$ is
trivial.
To prove that $\phi_i \in \DO$, notice that by~Axiom~(iii), $|\phi_i|$
is the map of sets $|\xi|^{-1}  \to |\psi|^{-1}$  induced by the
diagram
\[
\xymatrix@C=2em@R=1.2em{|T| \ar[rr]^{|\phi|} 
\ar[dr]_{|\xi|} && |S| \ar[ld]^{|\psi|}
\\
&|P|\,.&
}
\]

Regarding~(\ref{pisu_opet_v_Sydney}), 
by Axiom~(iv) we have  $\phi^{-1}(j) = \phi^{-1}_k(j)$. But
$\phi_k:\xi^{-1}(k)\to \psi^{-1}(k) = U''$ and hence its unique
fiber is equal to $\xi^{-1}(k)$. So, $\phi^{-1}(j) = \xi^{-1}(k)$.
\end{proof}

\begin{definition}
\label{har}
We will call the pair 
$T\stackrel{(\phi,j)}{\longrightarrow} S
\stackrel{(\psi,i)}{\longrightarrow} P$ of morphisms
in Definition~\ref{d3}  with disjoint fibers 
{\em harmonic\/}  if $\inv{\xi}(i) = \inv \psi
(i)$ and the map $\phi_i$ in~(\ref{harmonika}) is the identity.
\end{definition}

\begin{corollary}
\label{zitra_vylet_do_Sydney}
If the blow-up axiom is satisfied then all pairs with disjoint fibers
are harmonic.
\end{corollary}

\begin{proof}
The map $\phi_i$ in~(\ref{harmonika}) is a \qb\ in $\DO$, so it is the
identity by Corollary~\ref{zase_mam_nejaky_tik}.
\end{proof}

\begin{corollary} 
\label{move}
Assume that
\begin{equation}
\label{eq:3}
\xymatrix@R = 1em@C=4em{& {P'} \ar[dr]^{(\psi',\, i)} &
\\
T\ar[dr]^{(\phi'',\, l)} \ar[ur]^{(\phi',\, j)} && S
\\ 
&{P''}\ar[ur]^{(\psi'',\, k)}&
}
\end{equation}
is a commutative diagram of elementary morphisms. 
Assume that $|\psi''|(l) = i, |\psi'|(j) = k$ and $i\ne k.$
Let $F', F'',G',G''$ be the
only nontrivial fibers of $\phi',\phi'',\psi',\psi''$, respectively.
Then one has canonical quasibijections
\begin{equation}
\label{za_tyden_poletim_do_Prahy}
\sigma': F' \longrightarrow G'' \ 
\mbox { and }\ \sigma'': F'' \longrightarrow G'.  
\end{equation}
If both pairs in~(\ref{eq:3}) are harmonic, then $F' = G''$, $F'' =
G'$ and $\sigma',\sigma''$ are the identities.
\end{corollary}

\begin{proof} 
Let $\xi : T \to S$ be the composite $\psi'\phi' = \psi''\phi''$.
One has  $G' =
\inv{\psi'}(j)$, $G'' = \inv{\psi''}(k)$ and, by Lemma~\ref{l7},  
$F' = \inv{\phi'}(j) = \inv{\xi}(k)$ and $F'' = \inv{\phi''}(l) = \inv{\xi}(i)$.
We define
\[
\sigma' :   F' = \inv{\xi}(k) \stackrel{\phi''_k}\longrightarrow
\inv{\psi''}(k) = G''
\ \and \
\sigma'' :   F'' = \inv{\xi}(i) \stackrel{\phi'_i}\longrightarrow
\inv{\psi''}(i) = G'.
\]
These maps are  \qb{s} by Lemma~\ref{l7}. The second part of the
corollary follows directly from the definition of harmonicity.
\end{proof}

\section{Markl operads}
\label{section-markl}

The aim of this section is to introduce Markl operads and their
algebras in the context
of operadic categories, and formulate assumptions under which these
notions agree with the standard ones introduced in~\cite{duodel}.
\begin{assu}
\label{A}
We assume that $\ttO$ is a strictly graded
factorizable operadic category in which all
\qb{s} are invertible, the  blow-up axiom and unique fiber condition are
fulfilled, and a
morphism $f$ is an isomorphism if and only if $e(f) =
0$; recall that by Lemma~\ref{Podivam_se_do_Paramaty?}
this happens 
if and only if all fibers of $f$ are local terminal. In brief, we~require
\[
\hbox{\rm \Fac\ \& \SBU\ \& \QBI\ \& \UFB\ \& \SGrad.}
\]
\end{assu}
Denoting by $\Iso$ the subcategory of $\ttO$ consisting of \label{Co
  zitra zjisti?}
all isomorphisms we therefore have by~\SGrad
\[
\Iso = \{ f : S \to T;\ e(f) = 0\}
=  \{ f : S \to T;\ e(F) = 0   \hbox { for each fiber $F$ of $f$}   \}.
\]
Another  consequence of the strict grading  assumption is that $T \in \ttO$ is
local terminal if and only if \hbox {$e(T) = 0$}.
 
\begin{definition}
\label{markl} 
A {\em Markl $\ttO$-operad\/} in a symmetric monoidal category $\ttV$
is a presheaf $\Markl: \Iso^{\rm op} \to \ttV$ with values in $\ttV$ equipped, for each
elementary morphism $F\fib T \stackrel\phi\to S$ as in
Definition~\ref{plysacci_postacci}, 
with a ``circle product''
\begin{equation}
\label{ten_prelet_jsem_podelal}
\circ_{\phi}: \Markl(S)\otimes \Markl(F)\to \Markl(T).
\end{equation}
These operations must satisfy the following set of axioms.

\begin{itemize}
\item[(i)]
 Let
$T\stackrel{(\phi,j)}{\longrightarrow} S
  \stackrel{(\psi,i)}{\longrightarrow} P$
  be elementary morphisms such that $|\psi|(j) = i$ and let
  $\xi: T \to P$ be the composite $\psi \kompozice \phi$.  Then
the  diagram
\begin{equation}
\label{vymena}
\xymatrix@R = 1em{& {         \Markl(P)\otimes \Markl(\xi^{-1}(i))} 
\ar[dr]^(.65){\circ_\xi} &
\\
 \Markl(P)\otimes \Markl(\psi^{-1}(i))\otimes \Markl(\phi^{-1}(j)) 
\ar[dr]_{\circ_\psi\ot \id} \ar[ur]^{\id \ot \circ_{\phi_i}} &&  \Markl(T)
\\ 
&{ \Markl(S)\otimes \Markl(\phi^{-1}(j))}\ar[ur]_(.65){\circ_\phi}&
}
\end{equation}
commutes.

\item[(ii)]  
Let us consider the diagram 
\begin{equation}
\label{den_pred_Silvestrem_jsem_nachlazeny}
\xymatrix@R = 1em@C=4em{& {P'} \ar[dr]^{(\psi',\, i)} &
\\
T\ar[dr]^{(\phi'',\, l)} \ar[ur]^{(\phi',\, j)} && S
\\ 
&{P''}\ar[ur]^{(\psi'',\, k)}&
}
\end{equation}
of elementary morphisms with disjoint fibers as in Corollary~\ref{move}. 
Then the diagram
\begin{equation}
\label{Napadne jeste tento rok snih?}
\xymatrix@R = 2.5em@C=4em{
\Markl(S) \ot \Markl(G') \ot \Markl(F')\ar[r]^(.55){\circ_{\psi'} \ot \id} &
 \Markl(P') \ot \Markl(F')\ar[d]^(.5){\circ_{\phi'}}
\\
\Markl(S) \ot \Markl(F'') \ot \Markl(G'') 
\ar[u]^{\id \ot (\sigma''^{-1})^* \ot   \sigma'^*}
& \Markl(T)
\\
\ar[u]^{\id \ot \tau}
\Markl(S) \ot \Markl(G'') \ot \Markl(F'')\ar[r]^(.55){\circ_{\psi''}  \ot \id} & 
\Markl(P'') \ot \Markl(F'') \ar[u]_(.5){\circ_{\phi''}}
}
\end{equation}
commutes whenever
$
F' \fib  T \stackrel{\phi'}\to P',\
F'' \fib  T \stackrel{\phi''}\to P'',\
G' \fib  P' \stackrel{\psi'}\to S \hbox { and }
G'' \fib  P'' \stackrel{\psi''}\to S,
$
and the maps $({\sigma''^{-1}})^*$ and $\sigma'^*$ are induced by 
\qb{s}~(\ref{za_tyden_poletim_do_Prahy}).

\item[(iii)]
For every commutative diagram
\[
\xymatrix@C=4em@R=1.4em{T'\ar[r]_\cong^\omega \ar[d]^{\phi'}
& T'' \ar[d]^{\phi''}
\\
S'\ar[r]^\sigma_\sim & S''
}
\] 
where $\omega$ is an isomorphism, $\sigma$ a \qb, 
and
$F' \fib_i T' \stackrel{\phi'}\to S'$, $F'' \fib_j 
T'' \stackrel{\phi''}\to S''$,
the diagram
\begin{equation}
\label{posledni_nedele_v_Sydney}
\xymatrix@C=4em{\ar[d]_{\bar{\omega}^* \ot \sigma^*}^\cong
\Markl(F'') \ot \Markl(S'') 
\ar[r]^(.65){\circ_{\phi''}}& \Markl(T'') \ar[d]^{\omega^*}_\cong
\\
\Markl(F') \ot
\Markl(S')\ar[r]_(.65){\circ_{\phi'}}&  \Markl(T')\,,
}
\end{equation}
in which $\bar{\omega} : F' \to F''$ is the
induced map~(\ref{zitra_na_prohlidku_k_doktoru_Reichovi}) of fibers, commutes.
\end{itemize}
A Markl operad $\Markl$ is {\em  unital\/} if one is given, for each
trivial $U$, a map $\eta_U
:\bfk \to \Markl(U)$  such that the diagram
\begin{equation}
\label{Holter_se_blizi.}
\xymatrix{\Markl(U) \ot \Markl(T) \ar[r]^(.6){\circ_!} & \Markl(T)
\\
\ar[u]^{\eta_U \ot \id}
\bfk \ot \Markl(T)  \ar@{=}[r]^(.58)\cong & \Markl(T) \ar@{=}[u]
}
\end{equation}
where $T \fib T \stackrel!\to U$ is the unique map, commutes whenever $T$ is such that $e(T) \geq 1.$ 

\end{definition}

\begin{remark} The definition above is more general than we need in the 
  rest of the paper but we believe it will be useful in a future. Since we assume the strong blow-up axiom, all pairs of morphisms with disjoint fibers are
harmonic by Corollary~\ref{zitra_vylet_do_Sydney}. Thus in Axiom~(ii)
the morphisms $\sigma'$ and $\sigma''$ are the
identities. Denoting $F:=F' = G''$ and $G := G' = F''$,
diagram~\eqref{Napadne jeste tento rok snih?} takes the form
\[
\xymatrix@R = 1.5em@C=4em{
\Markl(S) \ot \Markl(G) \ot \Markl(F)\ar[r]^(.55){\circ_{\psi'} \ot \id} &
 \Markl(P') \ot \Markl(F)\ar[d]^(.5){\circ_{\phi'}}
\\
& \Markl(T)
\\
\ar[uu]^{\id \ot \tau}
\Markl(S) \ot \Markl(F) \ot \Markl(G)\ar[r]^(.55){\circ_{\psi''}  \ot \id} & 
\ \Markl(P'') \ot \Markl(G) \ar[u]_(.5){\circ_{\phi''}}\, .
}
\]
\end{remark}

\label{Jsem znepokojen.}
Let $\LT$ be the operadic subcategory of $\ttO$ consisting of all local
terminal objects of $\ttO.$ Denote by $\term_\Term : \LT \to \ttV$ the constant
functor, i.e.~the functor such that $\term_\Term (u) = \bfk$
for each local terminal $u\in \ttO$. Since 
$\LT$ is equivalent, as a category, to the
discrete groupoid of trivial objects,
for a unital 
Markl operad $\Markl$
the collection $\{\eta_U :\bfk \to \Markl(U)\}$ of unit maps
extends  uniquely into a transformation
\begin{equation}
\label{2x2}
\eta :   \term_\Term \to \iota^* \Markl
\end{equation}
from the constant functor $ \term_\Term$ to the
restriction of $\Markl$ along the inclusion
$\iota : \LT \hookrightarrow \ttO$. The component $\eta_u : \bfk \to
\Markl(u)$ of that extension is, for $u$ local terminal, given by
$\eta_u :=\ !^* \eta_U$, where $!: u \to U$ is the unique map to a
trivial $U$. Transformation~(\ref{2x2}) of course amounts to a family
of maps $\eta_u : \bfk \to \Markl(u)$
given for each local terminal $u \in
\LT$, such that the diagram
\begin{equation}
\label{Je_vedro.}
\xymatrix@R=1.2em{\Markl(u)  \ar[r]^(.5){!^*} & \Markl(v)
\\
\bfk  \ar@{=}[r] \ar[u]^{\eta_u }   &
  \bfk
\ar[u]_{\eta_v}
}
\end{equation}
commutes for each (unique) map $! : v \to u$ of local terminal
objects. We will call the components   $\eta_u : \bfk \to
\Markl(u)$ of the transformation~\eqref{2x2} the {\em extended units\/}.

For each $T$ with $e(T) \geq 1$ and $F \fib T \stackrel!\to u$ with
$u$ a local terminal object, 
one has a map $\vartheta(T,u) :  \Markl(F) \to  \Markl(T)$ defined by the diagram
\begin{equation}
\label{proc_ty_lidi_musej_porad_hlucet}
\xymatrix@R=1em{\Markl(u) \ot \Markl(F) \ar[r]^(.6){\circ_!} & \Markl(T)
\\
\Markl(U) \ot \Markl(F) \ar[u]^{!^* \ot \id}  &
\\
\ar[u]^{\eta_U \ot \id}
\bfk \ot \Markl(F) \ar@{=}[r]^\cong &\ \Markl(F)\,. \ar[uu]_{\vartheta(T,u)}
}
\end{equation}   
Notice that the composite of the maps in the left column equals
$\eta_u \ot \id$, where $\eta_u$ is a~component of the extension~(\ref{2x2}).

The unitality  offers
a generalization of Axiom~(iii) of Markl operads
which postulates for each commutative diagram
\begin{equation}
\label{moc_se_mi_na_prochazku_nechce}
\xymatrix@C=4em{T'\ar[r]^\omega_\cong \ar[d]^{\phi'}\ar[rd]^{\phi} 
& T'' \ar[d]^{\phi''}
\\
S'\ar[r]^\sigma_\cong & S''
}
\end{equation} 
where the horizontal maps are isomorphisms and 
the vertical
maps are elementary, with
$F' \fib_i T' \stackrel{\phi'}\to S'$, $F'' \fib_j 
T'' \stackrel{\phi''}\to S''$, the commutativity of 
the diagram
\begin{equation}
\label{X}
\xymatrix@C=4em
{\Markl(F) \ot \Markl(S'')&\Markl(F'') \ot \Markl(S'') \ar[l]_{\omega^*_j \ot \id}^\cong
\ar[r]^(.65){\circ_{\phi''}}& \Markl(T'') \ar[d]^{\omega^*}_\cong
\\
\ar[u]^{\vartheta(F,\inv{\sigma}(j)) \ot \id}
\Markl(F') \ot \Markl(S'')\ar[r]^{\id \ot \sigma^*}_\cong &\Markl(F') \ot
\Markl(S')\ar[r]_(.65){\circ_{\phi'}}&  \Markl(T')
}
\end{equation}
in which $F := \inv{\phi}(j)$ and $\omega_j : F \to F''$ is the
induced map of fibers. Notice that if $\sigma$ is a \qb,
(\ref{X}) implies~(\ref{posledni_nedele_v_Sydney}).

\begin{definition}
\label{svedeni}
A Markl operad $\Markl$ is {\em strictly unital\/} if all the maps
$\vartheta(T,u)$ in~(\ref{proc_ty_lidi_musej_porad_hlucet})
are identities. It is {\em $1$-connected\/} 
if~the unit maps $\eta_U : \bfk \to \Markl(U)$ are isomorphisms for each
trivial~$U$.
\end{definition}

If $\Markl$ is strictly unital, then $\Markl(F) = \Markl(F')$
in~(\ref{X}), so this diagram 
takes a
particularly simple form, namely
\begin{equation}
\label{Ve_ctvrtek_letim_do_Prahy.}
\xymatrix@C=4em{
\Markl(F'') \ot \Markl(S'') \ar[d]_{\omega^*_j \ot \sigma^*}^\cong
\ar[r]^(.65){\circ_{\phi''}}& \Markl(T'') \ar[d]^{\omega^*}_\cong
\\
\Markl(F') \ot
\Markl(S')\ar[r]^(.65){\circ_{\phi'}}&\,  \Markl(T')\,.
}
\end{equation}
The following lemma is an easy exercise on definitions.

\begin{lemma}
\label{Zavolam mu zitra?}
A $1$-connected Markl operad is strictly unital if and only if, for each  $F
\fib T \to u$ with $u$ a local terminal object, one has $\Markl(F) =
\Markl(T)$ and the diagram
\begin{equation}
\label{Zitra prijde obleva.}
\xymatrix@C=5em{
\Markl(u) \ot \Markl(F) \ar[rr]^(.6){\circ_!}& & \Markl(T)
\\
\Markl(U) \ot \Markl(F) \ar[u]_{!^* \ot \id}\ar[r]^(.55){(\eta_U \ot \id)^{-1}}&
\bfk \ot \Markl(F) \ar[r]^\cong
&\, \Markl(F)\ar@{=}[u]
}
\end{equation}
commutes. Here
$!$ denotes the corresponding unique map to local terminal
objects.
\end{lemma}

Let us introduce similar terminology for ``standard''
$\ttO$-operads. In this framework, $\term_{\LT}$ will denote the
constant $\LT$-operad. As for Markl operads, the collection 
 $\{\eta_U :\bfk \to \oP(U)\}$ of unit maps of an $\ttO$-operad $\oP$
extends  uniquely to a transformation
\begin{equation}
\label{7_dni_do_odletu_ze_Sydney}
\eta : \term_{\LT} \to \iota^* \oP
\end{equation}
of $\LT$-operads. One  has an obvious analog
of diagram~(\ref{proc_ty_lidi_musej_porad_hlucet}), and the strict
unitality and $1$-connectedness for $\ttO$-operads is
defined analogously. The main result of this section~reads:

\begin{theorem} 
\label{Mcat}
There is a natural forgetful functor from the category of 
strictly unital \hbox{$\ttO$-operads} to the category of strictly
unital Markl $\ttO$-operads, which restricts to an isomorphism of
the subcategories of $1$-connected operads.
\end{theorem}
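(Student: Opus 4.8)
The plan is to construct the forgetful functor by restricting the simultaneous (multi-input) composition of a standard $\ttO$-operad $P$ to the elementary morphisms, thereby producing the circle products~(\ref{ten_prelet_jsem_podelal}). Concretely, given an elementary morphism $F \fib T \stackrel\phi\to S$ whose only nontrivial fiber is $F = \inv\phi(a)$, all other fibers being trivial objects $U_{c_i}$, the structure map $\gamma_\phi : \calP(\phi) \ot \calP(S) \to \calP(T)$ has as its source $\bigotimes_{i \in |S|} \calP(\inv\phi(i)) \ot \calP(S)$. Using the extended unit~(\ref{7_dni_do_odletu_ze_Sydney}) to insert $\eta_{U_{c_i}} : \bfk \to \calP(U_{c_i})$ in all the trivial slots $i \ne a$, I would define
\[
\circ_\phi : \calP(S) \ot \calP(F) \longrightarrow \calP(T)
\]
as the composite of $\gamma_\phi$ with these unit insertions (plus the symmetry isolating the $a$th factor). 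The \emph{strict} extended unitality guarantees that the maps $\vartheta(T,u)$ are identities, which is exactly what makes these insertions harmless and keeps the circle products well behaved under the unique maps to local terminal objects.

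First I would verify that the underlying presheaf on $\Iso^{\op}$ is simply $\calP$ restricted along $\Iso \hookrightarrow \ttO$; since isomorphisms have trivial fibers by Lemma~\ref{Podivam_se_do_Paramaty?}, the operad structure maps along isomorphisms reduce (using the units again) to the contravariant action $\omega^*$, giving the $\ttV$-presheaf structure. Next I would check the three axioms of a Markl's operad. Axiom~(i) is the \emph{joint-fiber} case of associativity: for $T \stackrel{(\phi,j)}\to S \stackrel{(\psi,i)}\to P$ with $|\psi|(j)=i$, Lemma~\ref{l7} identifies $\xi^{-1}(i)$ and the induced elementary map $\phi_i$, and the required commuting triangle is precisely the standard associativity Axiom~(i) of Definition~\ref{Jarca_u_mne_prespala!} after the units have been inserted in all trivial slots. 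Axiom~(ii), the \emph{disjoint-fiber} case, uses the harmonicity supplied by Corollary~\ref{zitra_vylet_do_Sydney} and the quasibijections~(\ref{za_tyden_poletim_do_Prahy}) of Corollary~\ref{move}; the commuting hexagon again unwinds to a single instance of operadic associativity together with the $\Sigma$-equivariance encoded in the symmetry $\tau$. Axiom~(iii) is the equivariance/naturality square~(\ref{posledni_nedele_v_Sydney}), which follows from the functoriality of the fiber assignment and the compatibility of $\gamma$ with the contravariant action, i.e.\ from the operad axioms for $\calP$ applied to the commuting square defining $\omega_{(i,j)}$ via~(\ref{zitra_na_prohlidku_k_doktoru_Reichovi}).

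For the isomorphism on $1$-connected subcategories, the plan is to build the inverse functor, reconstructing the simultaneous composition $\gamma_f$ of a standard operad out of the circle products of a Markl's operad. Here I would use factorizability together with the strong blow up axiom: by \SFac\ every $f : T \to S$ factors uniquely as a quasibijection followed by a map in $\DO$, and by Lemma~\ref{spousta-prace} an elementary map in $\DO$ over a single fiber can be decomposed into a chain of elementary steps~(\ref{eq:2}) realizing an arbitrary chain~(\ref{eq:4}) of elementary morphisms between the fibers. Iterating the circle products along such a chain, one composes in one fiber after another; $1$-connectedness~(\ref{za_chvili_pujdu_behat} being an isomorphism) is what allows the remaining trivial fibers to be filled in uniquely and reversibly, so that the resulting $\gamma_f$ is independent of the chosen chain. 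The main obstacle I anticipate is precisely this \textbf{well-definedness}, namely that the iterated circle product does not depend on the order in which the fibers are composed nor on the particular decomposition~(\ref{eq:2}) used. This is where Lemma~\ref{spousta-prace-1} is indispensable: it guarantees that two chains differing by a commuting square~(\ref{eq:11}) of elementary maps lift to canonically identified decompositions, and the disjoint-fiber Axiom~(ii) shows the two associated composites agree. Checking that these local coincidences assemble into the full associativity Axiom~(i) of Definition~\ref{Jarca_u_mne_prespala!}, and that the two functors are mutually inverse on the $1$-connected subcategories, is then a bookkeeping argument built on the uniqueness clauses in the blow up axiom (Corollaries~\ref{zase_mam_nejaky_tik} and~\ref{Zitra_ma_byt_-18_stupnu.}).
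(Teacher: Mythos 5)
Your proposal follows essentially the same route as the paper's proof: the forgetful functor is obtained by inserting extended units into the trivial slots of $\gamma_\phi$ for elementary $\phi$ (and of $\gamma_\omega$ for isomorphisms, yielding the $\Iso$-presheaf structure), and the inverse on $1$-connected operads is reconstructed by decomposing a general morphism into an isomorphism followed by a chain of elementary morphisms produced by the blow up axiom, with well-definedness secured exactly where you place it --- independence of the chain via the disjoint-fiber Axiom~(ii) together with the uniqueness clauses of the blow up, and independence of the factorization via the extended-unitality compatibility~(\ref{ve_ctvrtek_letim_do_Prahy}). The only cosmetic difference is that the paper factors $f$ using Lemma~\ref{Podival_jsem_se_do_Paramaty!} (isomorphism followed by a $\DO$-map whose terminal fibers are trivial) rather than bare \SFac, since non-chosen local terminal fibers must be normalized away, but this is the same mechanism you invoke.
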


\begin{example}
Constant-free May operads recalled in the introduction are
operads over the operadic category $\Surj$ of non-empty finite ordinals and their
surjections. Let us analyze  the meaning of the above definitions and results in
this particular case. With respect to the
canonical grading, cf.~Section~\ref{Pojedu_vecer_nebo_ted?}, 
elementary morphisms in the operadic category of finite
ordinals $\Surj$ are precisely order-preserving 
surjections
\[
\pi(m,i,n) : \overline{\rule{0em}{.7em}   m+n-1} \epi \overline
{\rule{0em}{.7em}m},
\  m \geq 1,\ n \geq 2,
\]
uniquely determined by the property that
\begin{equation}
\label{zitra_do_Rataj_slavit_Silvestra}
|\inv{\pi(m,i,n)}(j)|
= 
\begin{cases}
\hbox{$1$ if $j \not= i$}
\\
\hbox {$n$ if $j=i$.}
\end{cases}
\end{equation}
Since $\overline{\rule{0em}{.7em}1}$ is the only local terminal object of $\Surj$, 
the strict unitality is the same as the ordinary one and all
\im{s} are \qb{s}. A $(\Surj)_{\tt iso}$-presheaf turns out to be a collection  
$\{\Markl(n)\}_{n\geq 1}$  of $\Sigma_n$-modules, and elementary
maps~(\ref{zitra_do_Rataj_slavit_Silvestra}) induce operations
\[
\circ_i := \circ_{\pi(m,i,n)} : \Markl(m) \ot \Markl(n) \to
\Markl(m+n-1),
 \ n \geq 2,\ 1 \leq i \leq m,
\]
which satisfy the standard axioms listed 
e.g.~in~\cite[Definition~1.1]{markl:zebrulka}. 
Theorem~\ref{Mcat} in this case states the well-known fact that the category of unital May
operads with $\oP(1) = \bfk$ is isomorphic to the category of unital
Markl operads with $\Markl(1) = \bfk$.
\end{example}
 
\begin{proof}[Proof of Theorem~\ref{Mcat}.] Let $\oP$ be a strictly 
  unital $\ttO$-operad with composition laws~$\gamma_f$. 
If $\omega : T' \to T''$ is an isomorphism, we define
$\omega^* : \oP(T'') \to \oP(T')$ by the diagram
\begin{equation}
\label{dnes_jsem_byl_v_Eastwoodu}
\xymatrix{\oP(T'') \ot \oP(\omega) \ar[r]^(.6){\gamma_\omega}
& \oP(T')
\\
\ar[u]_{\id \ot \eta_\omega}
\oP(T'') \ot \bfk  \ar@{=}[r]^(.58)\cong& \oP(T'') \ar[u]_{\omega^*}
}
\end{equation}
in which $\oP(\omega)$ denotes the product $\oP(u_1) \ot \cdots \ot \oP(u_s)$ over
the fibers $\Rada u1s$ of $\omega$ and, likewise, $\eta_\omega :=
\eta_{u_1} \ot \cdots \ot \eta_{u_s}$. It is simple to show
that this construction is functorial, making 
$\oP$ an $\Iso$-presheaf in $\ttV$. In particular, $\omega^*$ 
is an isomorphism. For an
elementary $F \fib_i T
\stackrel\phi\to S$ we define $\circ_\phi : \oP(S)\ot \oP(F) \to \oP(T)$ by
the commutativity of the
diagram
\[
\xymatrix{\oP(S) \ot \oP(U_1) \!\ot\! \cdots  \!\ot\! \oP(U_{i-1}) \!\ot\! \oP(F) \!\ot\!   
\oP(U_{i+1}) \!\ot\! \cdots  \!\ot\! \oP(U_{|S|})\ar[r]^(.84){\gamma_\phi} & \oP(T)
\\
\oP(S) \ot 
\bfk^{\ot (i-1)} \ot \oP(F) \ot  \bfk^{\ot (|T|-i)} \ar[u] \ar@{=}[r]^(.6)\cong
&\oP(S) \ot \oP(F)  \ar[u]_{\circ_\phi}
}
\]
in which the left vertical map is induced by the unit morphisms
of $\oP$ and the
identity automorphism of $\oP(F)$. 
We claim that the $\Iso$-presheaf $\oP$
with operations $\circ_\phi$ defined above is a~Markl~operad. 

It is simple to check that these $\circ_\phi$'s satisfy the associativities
(i) and (ii) of a Markl operad. To prove Axiom~(iii), consider
diagram~(\ref{moc_se_mi_na_prochazku_nechce}) and invoke Axiom~(i) of
an operad over an operadic category, see Definition~\ref{Jarca_u_mne_prespala!}, 
once for $\phi =
\sigma\phi'$ and once for $\phi = \phi''\omega$
in place of $h =fg$. We will get two commutative squares sharing the
edge $\gamma_\phi$. 
Putting them side-by-side as in
\[
\xymatrix@C=4em@R=4em{
\bigotimes_k \oP(\phi'_k) \ot \oP(\sigma) \ot \oP(S'')
\ar[r]^(.6){\otimes_k \gamma_{\phi_k'} \ot \id}
\ar[d]_{\id \ot \gamma_\sigma}
& \oP(\phi) \ot
\oP(S'') \ar[d]^{\gamma_{\phi}} & \oP(\omega) \ot \oP(\phi'') \ot
\oP(S'')
\ar[l]_(.56){\bigotimes_i \gamma_{\omega_i} \ot \id}
\ar[d]^{\id \ot \gamma_{\phi''}}
\\
\oP(\phi') \ot \oP(S') \ar[r]^{\gamma_{\phi'}}   & \oP(T')&
\oP(\omega) \ot \oP(T'') \ar[l]_{\gamma_\omega}
}
\]
produces the central hexagon in the
diagram
\[
\xymatrix@R = 1.41em@C=-3em{
&\boxed{\oP(\phi'') \ot \oP(S'')} \ar@{=}[rrr] \ar[d]^\cong_{\bigotimes_k \omega_i^* \ot \id}
&&&&\ \ \bfk \ot \oP(\phi'') \ot \oP(S'') 
\ar[d]^{\eta_\omega \ot \id \ot \id}
&
\\
&\boxed{\oP(\phi) \ot \oP(S'')} \ar@{-->}[ddrrr]^{\gamma_\phi} 
&&\rule{9em}{0em}&&\ar[llll] 
\oP(\omega) \ot \oP(\phi'') \ot \oP(S'') \ar[d]_{\id \ot \gamma_{\phi''}}&
\\
&\bigotimes_k \oP(\phi'_k) \ot \oP(\sigma)\ar[u]^{\bigotimes_k
  \gamma_{\phi'_k} \ot \id}\ar[rd]^{\id \ot \gamma_\sigma} \ot \oP(S'')
&&&& \boxed{\oP(\omega) \ot \oP(T'')}\ar[ld]_{\gamma_\omega}&
\\
\oP(\phi')\ot \bfk \ot \oP(S'') \ar[ur]^(.3){\id \ot \eta_\sigma \ot \id}  
&& \boxed{\oP(\phi') \ot \oP(S')}
\ar [rr]^{\gamma_{\phi'}} && \boxed{\oP(T')} &&  \bfk \ot  \oP(T'')
\ar[lu]_{\eta_\omega \ot \id}
\\ 
\ar@{=}[u]\boxed{\oP(\phi')  \ot \oP(S'')} \ar[rru]_\cong^{\id \ot \sigma^*} &&&&&&
\,\oP(T'')\,.\ar@{=}[u]\ar[llu]_{\omega^*}^\cong
}
\]
The remaining arrows of this diagram
are constructed using the $\Iso$-presheaf structure of $\oP$ and the
extended units.
The boxed terms in the above diagram form  the internal hexagon~in
\[  
\xymatrix@R=1.3em@C=1em{
\oP(F) \ot \oP(S'')
 \ar[rd]&&
\oP(F'') \ot \oP(S'')
\ar[ll]^\cong_{\omega_j^* \ot \id} \ar[rr]^(.55){\circ_{\phi''}}\ar[d] && \ar[ld] \oP(T'') \ar[ddd]_\cong^{\omega^*}
\\
&\oP(\phi) \ot \oP(S'')&
\oP(\phi'') \ot \oP(S'')\ar[l]_\cong \ar[r]
 &  \oP(\omega)\ot \oP (T'')   \ar[d]&
\\
&\oP(\phi') \ot \oP(S'')  \ar[r]^\cong \ar[u]&\ar[r]
\oP(\phi') \ot \oP(S')
 &  \oP (T')&
\\
\oP(F') \ot \oP(S'') \ar[uuu]^{\vartheta(F,\inv{\sigma}(j)) \ot \id}
\ar[ur] \ar[rr]_\cong^{\id \ot \sigma^*} 
&&
\oP(F') \ot \oP(S') \ar[rr]^(.55){\circ_{\phi'}} \ar[u] && \, \oP(T')\,.\ar@{=}[lu]
}
\]
The commutativity of the outer hexagon
follows from the commutativity of the inner one. We recognize
in it diagram~(\ref{X}) with $\oP$ in place
of $\Markl$. Since~(\ref{X}) 
implies~(\ref{posledni_nedele_v_Sydney}) for $\sigma$ a \qb,
Axiom~(iii) is verified. Is is easy to check that the strict extended
unit~(\ref{7_dni_do_odletu_ze_Sydney}) is also the one for $\oP$
considered as a~Markl operad. 

Conversely, let $\Markl$ be a Markl operad. 
We are going to define, for each $f: S \to T$
with fibers $\Rada F1s$, the composition law
\begin{equation}
  \label{po_navratu_z_Prahy}
\gamma_f:
\Markl(T) \ot \Markl(F) \longrightarrow  \Markl(S)
\end{equation}
where, as several times before, $\Markl(F)$ denotes $\Markl(F_1) \ot \cdots \ot \Markl(F_s)$.
If $f$ is an \im, then all its fibers are local terminal,
so $\Markl(F) \cong \bfk$ by the strict  unitality and
the $1$-connectivity of $\Markl$. In this case we define
$\gamma_f$ as the composite
\begin{equation}
\label{vecer_Deminka?}
\Markl(T) \ot \Markl(F) \cong \Markl(T)
\stackrel{f^*}\longrightarrow 
\Markl(S)
\end{equation}
using the $\Iso$-presheaf structure of $\Markl$.

Assume now that $f \in \DO$ and that all local 
terminal fibers of $f$ are trivial. 
If $f$ is an \im\ it must be the identity 
by Corollary~\ref{zase_mam_nejaky_tik}. If it is not the case, at least one
fiber of $f$ has grade $\geq 1$ and we decompose $f$, using the
strong blow-up axiom, into a chain of elementary morphisms. The operation
$\gamma_f$ will then be defined as the composite of
$\circ$-operations corresponding to these elementary morphisms. Let us make this
procedure more~precise.

To understand the situation better, consider two elementary morphisms
$\phi$, $\psi$ with
{$(i,j)$-disjoint} fibers as in Lemma~\ref{l7} and their composite
$\xi = \psi(\phi)$. 
Notice that 
\[
\Markl(\xi) \cong  \Markl(\inv\xi(i)) \ot \Markl(\inv\xi(k))
\] 
by the
strict  unitality and the $1$-connectivity of $\Markl$. 
In this particular case we
define $\gamma_\xi$ by the commutativity of the diagram
\[
\xymatrix{\Markl(P) \ot \Markl(\inv\xi(i)) \ot \Markl(\inv\xi(k))
\ar[r]^(.65)\cong \ar[d]_{\id \ot (\phi^*_i)^{-1} \ot \id}
& \Markl(P) \ot \Markl(\xi)\ar[dd]^{\gamma_\xi}
\\
\ar[d]_{\id \ot \circ_\psi}
\Markl(P) \ot \Markl(\inv\psi(i)) \ot \Markl(\inv\phi(k))&
\\
\Markl(S)  \ot \Markl(\inv\phi(k))\ar[r]^(.6){\circ_\phi}& \Markl(T)\,,
}
\]
or, in shorthand, by $\gamma_\xi := \circ_\phi(\id \ot \circ_\psi)$.

Now take $f: S \to T \in \DO$ whose fibers of grade $\geq 1$ are
$\Rada F1k$ and the remaining fibers are trivial. Using the strong blow-up
axiom we factorize $f$ into a chain
\begin{equation}
\label{Z-142}
S = S_1 \stackrel{\phi_1}\longrightarrow  
S_2 \stackrel{\phi_2}\longrightarrow \cdots 
\stackrel{\phi_k}\longrightarrow S_k = T  
\end{equation} 
in which each $\phi_i$ is elementary with the unique nontrivial fiber
$F_i$, $1 \leq i \leq k$; we leave the details on how to obtain such a
factorization to the reader. We then define 
\[
\gamma_f := \circ_{\phi_1} ( \circ_{\phi_2} \ot \id) \cdots 
( \circ_{\phi_k} \ot \id^{\ot(k-1)}):
\Markl(T) \ot \Markl(F_k) \ot \cdots \ot \Markl(F_1) \longrightarrow \Markl(S).
\]

If $f: S \to T$ is a general morphism in $\ttO$, we
use~Lemma~\ref{Podival_jsem_se_do_Paramaty!} to factorize it as 
$f: S \stackrel\omega\to X \stackrel\psi\to T$ with an isomorphism $\omega$ and 
$\psi \in \DO$ all of whose  fibers are trivial. Notice that, due to the 
strict unitality and $1$-connectivity, one has  $\Markl(\psi) \cong \Markl(f)$. We then
define $\gamma_f$ by the commutativity of the diagram
\[
\xymatrix{\Markl(\psi) \ot \Markl(T) \ar[r]^\cong \ar[d]^{\gamma_\psi} & 
\Markl(f) \ot \Markl(T)\ar[d]_{\gamma_f}
\\
\Markl(X)\ar[r]^{\omega^*}&\ \Markl(S)\,.&
}
\]
The extended units are given by the extended units of $\Markl$ in the
obvious way.

Our definition of the $\gamma_f$-operations 
does not depend on the choices: the commutativity
of~(\ref{X}) that holds for  unital
Markl operads guarantees the independence on the factorization $f =
\psi \kompozice \omega$, while the commutativity
of~(\ref{den_pred_Silvestrem_jsem_nachlazeny}) implies the independence
on the choice of the decomposition~(\ref{Z-142}). We leave to the
reader the tedious
but straightforward verification that $\Markl$ with the above
composition laws is a strictly  unital $\ttO$-operad. 
\end{proof}

We are going to adapt the notion of algebra  over
operad, cf.~Definition \ref{Jaruska_ma_chripecku}, 
to the realm of
Markl operads. 

\begin{definition}
\label{Zapomel_jsem_si_pocitac_v_Koline_ja_hlupak}
An {\em algebra\/} over a $1$-connected Markl operad $\Markl$ in a symmetric
monoidal category $\ttV$ is 
a collection $A = \{A_c \ | \ c \in \pi_0(\ttO)\}$ of objects of $\ttV$ 
together with structure maps
\begin{equation}
\label{Ben_Ward_in_Prague}
\big\{
\alpha_T:
\Markl(T) \ot  \hskip -.5em 
\bigotimes_{c \in \pi_0(s(T))} A_c  \longrightarrow
A_{\pi_0(T)}\big\}_{T \in \ttO} \ .
\end{equation}
These operations are required to satisfy the following axioms.

\begin{itemize}
\item[(i)]
Unitality: for each component $c \in \pi_0(\ttO)$ the diagram
\[
\xymatrix{\Markl(U_c) \ot A_c \ar[r]^(.65){\alpha_{U_c}} & A_c
\\
\bfk \ot A_c\ar[r]^\cong \ar[u]^{\eta_{U_c}}  & A_c \ar@{=}[u]
}
\]
commutes.

\item[(ii)]
Equivariance: 
let $f : S\to T$ be an \im\ with fibers $\Rada u1s$. For $1 \leq
i \leq s$ put $c_i := \pi_0(s_i(S))$ and $d_i := 
\pi_0(s_i(T))$. 
Then the diagram
\[
\xymatrix@C=0em{
\Markl(T) \ot A_{c_1} \ot \cdots \ot A_{c_s}  \ar[rr]^(.45)\cong 
\ar[d]_{f^* \ot \id^{\ot s}} && 
\Markl(T) \ot \bfk \ot  A_{c_1} \ot \cdots \ot \bfk \ot A_{c_s} 
\ar[d]^{\id \ot \eta_{u_1} \ot \cdots \ot \eta_{u_s}}
\\
\Markl(S) \ot A_{c_1} \ot \cdots \ot A_{c_s}\ar[d]_{\alpha_S}  && 
\Markl(T) \ot \Markl(u_1) \ot  A_{c_1} \ot \cdots \ot \Markl(u_s)  \ot A_{c_s}
\ar[d]^{\id \ot \alpha_{u_1} \ot \cdots \ot \alpha_{u_s}}
\\
A_{\pi_0(S)} \ar@{=}[r] 
&A_{\pi_0(T)}& \Markl(T) \ot  A_{d_1} \ot \cdots \ot  A_{d_s}\ar[l]_(.6){\alpha_T}
}
\]
commutes.

\item[(iii)]
Associativity:
for an elementary map $F \fib_i S \stackrel\phi\to T$, the diagram
\[
\xymatrix{
\Markl(S) \ot A_{c_1} \ot \cdots  A_{c_{i-1}} \ot 
 A_{c_{i}} 
 \ot \cdots \ot   A_{c_{t+s-1}} \ar[r]^(.78){\alpha_S}
 &  A_{\pi_0(S)}\ar@{=}[ddd]
\\
\ar[u]^{\circ_\phi \ot \id^{\ot t+s-1}}
\ar[d]_{\id \ot \tau \ot \id^{\ot t-i}}
\Markl(T) \ot \Markl(F) \ot A_{c_1} \ot \cdots\ot  A_{c_{i-1}} \ot 
 A_{c_{i}} 
 \ot \cdots \ot   A_{c_{t+s-1}} & 
\\
\ar[d]_{\id \ot  \id^{\ot i} \ot \alpha_F \ot \id^{\ot t-i}}
\Markl(T)\ot A_{c_1} \ot \cdots \ot A_{c_{i-1}}  \ot \Markl(F) \ot 
 A_{c_{i}} 
 \ot \cdots \ot   A_{c_{t+s-1}}
&
\\
\Markl(T)\ot A_{c_1} \ot \cdots  A_{c_{i-1}}\ot
 A_{\pi_0(F)} 
 \ot \cdots \ot   A_{c_{t+s-1}}\ar[r]^(.78){\alpha_T}
& A_{\pi_0(T)}
}
\]
commutes, 
where $s = |S|$, $t = |T|$, $c_j := \pi_0(s_j)$ for $1\leq j \leq
s+t-1$ and 
\[
\tau :  \Markl(F) \ot A_{c_1} \ot \cdots\ot  A_{c_{i-1}} \longrightarrow
A_{c_1} \ot \cdots\ot  A_{c_{i-1}} \ot \Markl(F)
\]
the commutativity constraint in $\ttV$.
\end{itemize}
\end{definition}

Notice that in the situation of item~(ii) of
Definition~\ref{Zapomel_jsem_si_pocitac_v_Koline_ja_hlupak},  one has 
$s_i(S) = s(u_i)$,  $\pi_0(s_i(T)) = \pi_0(u_i)$ and $\pi_0(S) =
\pi_0(T)$. Likewise in~(iii),
\begin{equation}
\label{za_necely_tyden_do_Bari}
\pi_0(s_j(T)) =
\begin{cases}
\pi_0(s_{\inv{|\phi|}(j)}(S)) & \hbox {if $j \not= i$}
\\
\pi_0(F) & \hbox {otherwise.}
\end{cases}
\end{equation}

\begin{proposition}
 The category of algebras of a strictly
unital $1$-connected Markl operad~$\Markl$ is isomorphic to
 the category of algebras of the corresponding operad $\oP$.
\end{proposition}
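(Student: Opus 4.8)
The plan is to leverage Theorem~\ref{Mcat}, which identifies a strictly extended unital $1$-connected $\ttO$-operad $P$ with its associated Markl's operad $M$ \emph{on the level of underlying presheaves}: under this identification $M(T) = P(T)$ for every $T \in \ttO$, the circle products $\circ_\phi$ are extracted from the structure maps $\gamma_\phi$ by inserting the extended units into the trivial-fiber slots, and conversely $\gamma_f$ is reconstructed from the $\circ_\phi$'s. The first observation I would record is that, under this identification, the \emph{data} of an $M$-algebra and of a $P$-algebra on a fixed collection $A = \{A_c\}_{c \in \pi_0(\ttO)}$ literally coincide: Definition~\ref{Zapomel_jsem_si_pocitac_v_Koline_ja_hlupak} equips $A$ with a family $\{\alpha_T : M(T) \ot \bigotimes_{c} A_c \to A_{\pi_0(T)}\}_{T \in \ttO}$, while Definition~\ref{Jaruska_ma_chripecku}, unwound through $\End^\ttO_A$, equips $A$ with a family $\{\alpha_T : \bigotimes_c A_c \ot P(T) \to A_{\pi_0(T)}\}_{T\in\ttO}$, and these agree up to the symmetry $\tau$ once $M(T) = P(T)$. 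Thus the proposition reduces to showing that the two \emph{axiom systems} on this common data are equivalent, after which the isomorphism of categories follows at once, since a morphism of algebras is in both cases a $\ttV$-map $A \to A'$ commuting with all the $\alpha_T$.

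For the direction from $P$-algebras to $M$-algebras I would specialize the single $P$-algebra condition (compatibility of $\alpha$ with the operad compositions $\gamma_f$, plus preservation of units) to three special shapes of $f$. Taking $f$ to be an elementary morphism $F \fib_i S \stackrel\phi\to T$ and inserting extended units into the trivial fibers recovers the associativity axiom~(iii); taking $f$ to be an isomorphism recovers the equivariance axiom~(ii); and preservation of the operad units by $\alpha$ recovers the unitality axiom~(i). Each of these is a direct transcription of the way $\circ_\phi$, the $\Iso$-presheaf action $\omega^*$, and the units $\eta_{U_c}$ were carved out of $\gamma$ in the proof of Theorem~\ref{Mcat}, so no computation beyond bookkeeping of the trivial slots is required.

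The substantive direction is the converse: starting from the Markl-algebra axioms I must recover the full operad-algebra compatibility with $\gamma_f$ for an \emph{arbitrary} morphism $f : S \to T$. Here I would mirror the reconstruction of $\gamma_f$ in the proof of Theorem~\ref{Mcat}. Using Lemma~\ref{Podival_jsem_se_do_Paramaty!} I factor $f = \psi \circ \omega$ with $\omega$ an isomorphism and $\psi \in \DO$ having trivial terminal fibers, and then, via the blow up axiom and Lemma~\ref{spousta-prace}, decompose $\psi$ into a chain of elementary morphisms. Since $\gamma_f$ is by construction the corresponding composite of the $\circ_\phi$'s followed by $\omega^*$, the operad-algebra axiom for $\gamma_f$ unfolds into an iterated application of the Markl-algebra associativity~(iii) along the chain, the equivariance~(ii) for $\omega$, and the unitality~(i) for the inserted units. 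The one genuine obstacle is \emph{well-definedness}, i.e.\ independence of the resulting map on the chosen factorization and decomposition; this is exactly the coherence controlled by Lemma~\ref{spousta-prace-1}, together with the disjoint-fiber associativity built into Markl's axiom~(ii), in complete analogy with how independence of $\gamma_f$ itself was established in Theorem~\ref{Mcat}. Once both directions and their mutual inverseness are verified on the common data, and compatibility with morphisms is noted, the two categories of algebras are isomorphic.
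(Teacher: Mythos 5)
Your proposal is correct and follows exactly the route the paper intends: the paper's own ``proof'' consists of the single line ``An exercise in the axioms of operads and their algebras,'' and your unwinding via Theorem~\ref{Mcat} --- identifying the algebra data on a fixed collection up to the symmetry $\tau$, reading off the three Markl-algebra axioms by specializing the $P$-algebra condition to elementary morphisms, isomorphisms, and units, and recovering compatibility with a general $\gamma_f$ from the factorization $f = \psi\circ\omega$ and the elementary chain decomposition whose coherence (independence of choices) was already established in the proof of Theorem~\ref{Mcat} --- is precisely the exercise the authors have in mind. The only cosmetic point is that the well-definedness you worry about is an operad-level fact already settled in Theorem~\ref{Mcat}, so at the algebra level it is inherited for free rather than needing a separate appeal to Lemma~\ref{spousta-prace-1}.
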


\begin{proof}
An exercise in the axioms of operads and their algebras.
\end{proof}

Let us formulate 
Definition~\ref{Zapomel_jsem_si_pocitac_v_Koline_ja_hlupak} for the
particular case of
algebras in the category of 
graded vector spaces.

\begin{definition}
\label{vcera_s_Mikesem_na_Jazz_Bluffers}
An {\em algebra\/} over a $1$-connected Markl operad $\Markl$ in the category
$\Vect$ of graded $\bfk$-vector spaces is 
a collection $A = \{A_c \ | \ c \in \pi_0(\ttO)\}$ together with
structure maps
\[
\nonumber 
\Markl(T) \ot  \hskip -.5em 
\bigotimes_{c \in \pi_0(s(T))} A_c \ni x \ot a_1 \ot \cdots \ot a_s
\longmapsto  x(\Rada a1s) \in A_{\pi_0(T)}
\]
given for each $T \in \ttO$.
These operations are required to satisfy the following axioms. 

\begin{itemize}
\item[(i)]
Unitality: for a local terminal $u$,  $1 \in \bfk \cong \Markl(u)$ and $a \in
A_{\pi_0(s(u))}$, 
denote $u  a : = 1(a)$. Then $U  a = a$ for $U$  a chosen
local terminal object.

\item[(ii)]
Equivariance: for an \im\ $f : S\to T$ with fibers $\Rada u1s$
 and $x \in \Markl(T)$,
\[
 f^*(x)(\Rada a1s) = x(\rada{u_1a_1}{u_sa_s}).
\]
\item[(iii)]
Associativity: 
for an elementary map $F \fib_i S \stackrel\phi\to T$, $x\in \Markl(T)$ and
$y \in \Markl(F)$,  
\[
\circ_\phi(x,y)(\rada{a_1}{a_{i-1}},a_{i},\ldots,
{a_{t+s-1}})
=
(-1)^\varepsilon \cdot
x(\rada{a_1}{a_{i-1}},y(a_{i},\ldots),\ldots
{a_{t+s-1}}),
\]
where $\varepsilon := {|y|(|a_1| + \cdots + |a_{i-1}|)}$, $s = |S|$
and $t = |T|$.
\end{itemize}
\end{definition}

\def\fl{{\mathfrak l}}
\def\fdl{{\mathfrak D}_{\mathfrak l}}

\begin{example}
\label{coboundary}

A Markl operad $\Markl$
in $\Vect$
such that $\Markl(T)$ is for each $T$ 
a $1$-dimensional vector space is called a {\em cocycle\/} following the terminology of~\cite{getzler-kapranov:CompM98}.
An important cocycle is the operad ${\sf 1}_\ttO$ such that  ${\sf
  1}_\ttO(T) := \bfk$ for each $T \in \ttO$, with all composition
laws the identities. Slightly imprecisely, we will call ${\sf 1}_\ttO$ the {\em
  terminal\/} $\ttO$-operad since it is the linearization of the terminal
$\ttO$-operad over the Cartesian monoidal category of sets.

Less trivial cocycles can be constructed as follows. We
say that a graded vector space~$W$ is {\em invertible\/} if $W\ot
W^{-1} \cong \bfk$ for some  $W^{-1} \in \Vect$. This
clearly means that $W$ is an iterated (de)suspension of the ground
field $\bfk$.
Suppose we are given a map $\fl : \pi_0(\ttO) \to \Vect$ that assigns
to each $c \in  \pi_0(\ttO)$ an invertible graded vector space $\fl(c)$.
With the notation used in~(\ref{Ben_Ward_in_Prague}) we
introduce the cocycle $\fdl$ by
\[
\fdl(T) := \fl(\pi_0(T)) \ot \bigotimes_{c \in \pi_0(s(T))} \inv{\fl(c)}
\]
with the trivial action of $\Iso$.
To define,  for $F \fib_i S \stackrel\phi\to T$,  the composition laws
\[
\circ_\phi : \fdl(F) \ot \fdl(T) \to \fdl(S)
\]
we need to specify a map
\[
\fl(\pi_0(F)) \ot \bigotimes_{c \in \pi_0(s(F))} \inv{\fl(c)} \ot
\fl(\pi_0(T)) \ot \bigotimes_{c \in \pi_0(s(T))} \inv{\fl(c)} \longrightarrow
\fl(\pi_0(S)) \ot \bigotimes_{c \in \pi_0(s(S))} \inv{\fl(c)}.
\]
To do so, we notice that  there is an equality of unordered lists
\[
\pi_0(s(F)) \sqcup \pi_0(s(T)) = \pi_0(s(S)) \sqcup \{  \pi_0(s_i(T))
\},\] and  \[
\pi_0(S) = \pi_0(T) \  
 \ \pi_0(F) =   \pi_0(s_i(T)),
\]
cf.~(\ref{za_necely_tyden_do_Bari}). Keeping this in mind, the
composition law $\circ_\phi$ is defined as the canonical
isomorphism $\fdl(F) \ot \fdl(T) \cong \fdl(S)$.

Cocycles of the above form are called {\em coboundaries\/}. Notice
that  ${\sf 1}_\ttO = \fdl(T)$ with $\fl$ the constant function such
that $\fl(c) := \bfk$ for each   $c \in \pi_0(s(T))$.
\end{example}

Markl operads in $\Vect$ form a symmetric monoidal category,
with the monoidal structure given by the level-wise tensor product
and  ${\sf 1}_\ttO$ the monoidal unit. As an exercise we recommend to
prove the following very useful proposition.

\begin{proposition}
\label{za_chvili_zavolam_Jarusce}
The categories of $(\Markl \ot \fdl)$-algebras and of 
$\Markl$-algebras in $\Vect$ are isomorphic. More
precisely, there is a natural one-to-one correspondence between
\hfill\break
\noindent
--  $\Markl$-algebras
with underlying collection  $A = \{A_c \ | \ c \in \pi_0(\ttO)\}$,
and
\hfill\break 
\noindent
--  $(\Markl\ot \fdl)$-algebras
with underlying collection  
$A = \{A_c \ot \inv{\fl(c)} \, | \ c \in \pi_0(\ttO)\}$.
\end{proposition}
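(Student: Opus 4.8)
The plan is to leverage the symmetric monoidal structure on the category of Markl's operads in $\Vect$ (recalled just before the statement) together with the fact that a coboundary is an invertible object in it. The first step I would isolate is an elementary \emph{tensor-of-algebras} principle: if $M,N$ are Markl's $\ttO$-operads and $\{A_c\}$ is an $M$-algebra while $\{V_c\}$ is an $N$-algebra, then the level-wise product $\{A_c\ot V_c\}$ carries a canonical $(M\ot N)$-algebra structure, whose structure map at $T$ is extracted from the reshuffling isomorphism
\[
(M\ot N)(T)\ot\!\!\bigotimes_{c\in\pi_0(s(T))}\!\!(A_c\ot V_c)\;\cong\;
\Big(M(T)\ot\!\!\bigotimes_{c\in\pi_0(s(T))}\!\!A_c\Big)\ot
\Big(N(T)\ot\!\!\bigotimes_{c\in\pi_0(s(T))}\!\!V_c\Big)
\]
followed by the two given structure maps. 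Verifying axioms (i)--(iii) of Definition~\ref{vcera_s_Mikesem_na_Jazz_Bluffers} for $\{A_c\ot V_c\}$ is routine, since each such axiom is literally the tensor product of the corresponding axioms for $\{A_c\}$ and for $\{V_c\}$, the reordering of factors being governed by the symmetry $\tau$ of $\ttV$.

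Next I would pin down the relevant \emph{canonical coboundary algebra}. By the very way $\fdl$ is assembled in Example~\ref{coboundary} from the pairings $\fl(c)\ot\inv\fl(c)\cong\bfk$, the collection $\{\inv\fl(c)\}_{c\in\pi_0(\ttO)}$ is tautologically a $\fdl$-algebra: its structure map $\fdl(T)\ot\bigotimes_c\inv\fl(c)\to\inv\fl(\pi_0(T))$ is precisely the canonical contraction of the source factors of $\fdl(T)$ against the arguments, leaving the output factor. Dually, $\{\fl(c)\}$ is an $\inv\fdl$-algebra, where $\inv\fdl$ is the coboundary attached to the invertible spaces $\inv\fl(c)$; moreover $\fdl\ot\inv\fdl\cong{\sf 1}_\ttO$ in the monoidal category of Markl's operads, with unit and counit again assembled from these contractions.

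With both ingredients in hand the correspondence is immediate. Given an $M$-algebra on $\{A_c\}$, tensoring it with the canonical coboundary algebra $\{\inv\fl(c)\}$ through the tensor-of-algebras principle yields an $(M\ot\fdl)$-algebra on $\{A_c\ot\inv\fl(c)\}$, as claimed. Conversely, given an $(M\ot\fdl)$-algebra on a collection $\{B_c\}$, tensoring with the $\inv\fdl$-algebra $\{\fl(c)\}$ produces an $(M\ot\fdl\ot\inv\fdl)$-algebra on $\{B_c\ot\fl(c)\}$, which under the identifications $\fdl\ot\inv\fdl\cong{\sf 1}_\ttO$ and ${\sf 1}_\ttO\ot M\cong M$ is an $M$-algebra. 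Taking $B_c=A_c\ot\inv\fl(c)$ returns $\{A_c\}$ via the canonical isomorphism $A_c\ot\inv\fl(c)\ot\fl(c)\cong A_c$, and the two assignments are mutually inverse. As both are manifestly compatible with morphisms of algebras, they assemble into the asserted isomorphism of categories $\Alg{M\ot\fdl}\cong\Alg{M}$.

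The main obstacle will be purely one of sign bookkeeping: when the invertible spaces $\fl(c)$ sit in nonzero degree, the symmetry $\tau$ introduces Koszul signs whenever the $\fl$- and $\inv\fl$-factors are moved past the $A_c$-factors, and one must check that the reshuffling in the tensor-of-algebras construction is carried out consistently in the equivariance axiom~(ii) and the associativity axiom~(iii). Once the canonical coboundary algebra and the invertibility $\fdl\ot\inv\fdl\cong{\sf 1}_\ttO$ are recorded, no further structural input is needed and the remaining verification is mechanical.
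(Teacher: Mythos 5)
The paper contains no proof of Proposition~\ref{za_chvili_zavolam_Jarusce} to compare against: it is explicitly left as ``an exercise to Definition~\ref{vcera_s_Mikesem_na_Jazz_Bluffers},'' with a pointer to Lemma~II.5.49 of \cite{markl-shnider-stasheff:book}, and your overall architecture --- the level-wise tensor-of-algebras principle in the symmetric monoidal category of Markl's operads, a tautological algebra over a coboundary, and the invertibility $\fdl\ot\fdl^{-1}\cong{\sf 1}_\ttO$ --- is exactly the intended route. However, the pivotal step of your argument fails as written: the collection $\{\inv{\fl}(c)\}$ is \emph{not} tautologically a $\fdl$-algebra. With $\fdl(T)=\fl(\pi_0(T))\ot\bigotimes_{c\in\pi_0(s(T))}\inv{\fl}(c)$ as in Example~\ref{coboundary}, your would-be structure map
\[
\fl(\pi_0(T))\ot\bigotimes_{c}\inv{\fl}(c)\ot\bigotimes_{c}\inv{\fl}(c)\longrightarrow \inv{\fl}(\pi_0(T))
\]
admits no ``canonical contraction'': the given pairing $\fl(c)\ot\inv{\fl}(c)\cong\bfk$ pairs $\fl$ against $\inv{\fl}$, whereas here both the source factors of $\fdl(T)$ and the algebra arguments are $\inv{\fl}$'s, and the leftover factor of $\fdl(T)$ is $\fl(\pi_0(T))$ rather than the required output $\inv{\fl}(\pi_0(T))$. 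This is not the ``mechanical sign bookkeeping'' you defer at the end: a degree count shows the source and target of the displayed map differ in degree by \emph{twice} the degree of the one-dimensional space $\fdl(T)$, so whenever $\fdl(T)$ has nonzero degree there is no nonzero degree-zero map at all. (Take $\fl(c)\equiv\,\downarrow\!\bfk$ and $T$ with two sources: the source sits three suspensions up, the target one.)

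The tautological $\fdl$-algebra is the opposite collection $\{\fl(c)\}$, for which the map $\fl(\pi_0(T))\ot\bigotimes_c\inv{\fl}(c)\ot\bigotimes_c\fl(c)\to\fl(\pi_0(T))$ is a genuine contraction with vanishing degree discrepancy for every $T$; dually, $\{\inv{\fl}(c)\}$ is an algebra over the inverse coboundary ${\mathfrak D}_{\inv{\fl}}=\fdl^{-1}$ --- you have the two swapped. With this flip the rest of your proof goes through verbatim and yields: $M$-algebras on $\{A_c\}$ correspond to $(M\ot\fdl)$-algebras on $\{A_c\ot\fl(c)\}$. Note that it is this corrected form, not the printed one, that the paper actually uses downstream: in Theorem~\ref{markl1} one has $\fl(c)=\,\downarrow\!\bfk$, and the assertion that the \emph{suspension} of an $\oddRTre$-algebra is an ordinary Markl's operad means that $\oddRTre=\termRTre\ot\fdl$-algebras carry underlying collections $\{\downarrow\! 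A_n\}=\{A_n\ot\fl(n)\}$. So the $\inv{\fl}(c)$ in the printed statement is incompatible with the formula for $\fdl$ in Example~\ref{coboundary} (the statement becomes correct upon replacing $\fdl$ by $\fdl^{-1}$); your proposal, by taking the printed statement at face value and declaring the needed map ``tautological,'' inherits this inconsistency rather than detecting it, and as written asserts a map that does not exist.
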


Proposition~\ref{za_chvili_zavolam_Jarusce} should be compared to
Lemma~II.5.49 of~\cite{markl-shnider-stasheff:book}.
In the classical operad theory, algebras can equivalently be described as
morphism to the endomorphism operad. We are going to give similar
description also in our setup. While the classical construction assigns the
endomorphism operad $\End_V$ to a vector space $V$, here we start with
a collection 
\begin{equation}
\label{Jarka_hovori}
V = \{V_c \ | \ c \in \pi_0(\ttO)\}
\end{equation}
of graded vector spaces indexed by the components of $\ttO$. We moreover
assume that to each local terminal object $u\in \ttO$ we are given a linear
map (denoted $u$ again) 
\begin{equation}
\label{Asi_jsem_dostal_premii!}
u: V_{\pi_0(s_1(u))} \to  V_{\pi_0(u)}
\end{equation}
such that, for each map $u \to v$ of local terminal objects 
with fiber $t$, the triangle
\begin{equation}
\label{Kdy_zacnu_jezdit_na_kole?}
\xymatrix{
V_{\pi_0(s_1(u))} \ar[rr]^u \ar[rd]^t   &&V_{\pi_0(u)}
\\
&V_{\pi_0(t)} \ar[ru]^v  &
}
\end{equation}
commutes. This diagram makes sense since $\pi_0(s_1(u)) = \pi_0(s_1(t)),$ by Axiom (iv) of operadic categories applied to  $f=\id_u$, $\pi_0(u) = \pi_0(v)$
and $\pi_0(s_1(v)) = \pi_0(t)$. 
We moreover assume that the maps corresponding to the chosen local terminal
objects are the identities. For $T \in \ttO$  we put
\[
\End_V(T) := \Vect\big(\textstyle 
\bigotimes_{c \in \pi_0(s(T))} V_c,V_{\pi_0(T)}\big).
\]

We define an action $
\End_V(T) \ni \alpha \mapsto f^*(\alpha) \in \End_V(S)$  
of  an \im\ $f : S\to T$ with fibers
$\Rada u1s$ by
\begin{equation}
\label{Popozitri_letim_do_Bari.}
f^*(\alpha)(\Rada a1s) := \alpha(\rada{u_1a_1}{u_sa_s}), \ 
a_1 \ot \cdots \ot a_s \in \textstyle \bigotimes_{c \in \pi_0(s(T))} V_c.
\end{equation}
This turns $\End_V$ into a functor  $\Iso^{\rm op} \to \Vect$.  
The composition law
\[
\circ_\phi: \End_V(F) \ot \End_V(T) \to \End_V(S) .
\]
is, for an elementary morphism
$F \fib_i S \stackrel\phi\to T$, defined as follows.
Assume
\[
\alpha : \textstyle \bigotimes_{c \in \pi_0(s(F))} V_c \longrightarrow
V_{\pi_0(F)} \in \End_V(F),\
\beta : \textstyle \bigotimes_{c \in \pi_0(s(T))} V_c \longrightarrow
V_{\pi_0(T)}\in 
\End_V(T)
\]
and notice that
\[
\pi_0(s(S)) = \pi_0(s(F)) \sqcup (\pi_0(s(T))
\setminus  \{  \pi_0(s_i(T))\}) \ \hbox { and } \ \pi_0(F) = \pi_0(s_i(T)).
\]
Then
\[
\circ_\phi(\alpha \ot \beta) : \textstyle \bigotimes_{c \in
  \pi_0(s(S))} V_c \to V_{\pi_0(S)} \in \End_V(S)
\]
is the map that makes the diagram
\[
\xymatrix{
 \textstyle \bigotimes_{c \in
  \pi_0(s(S))} V_c \ar[r]^(.3)\cong
\ar[dd]_{\circ_\phi(\alpha \ot \beta)} & \textstyle \bigotimes_{c \in
  \pi_0(s(F))} V_c  
\ot
 \bigotimes_{c \in
  \pi_0(s(S))\setminus  \{  \pi_0(s_i(T))\}} V_c \ar[d]^{\alpha \ot \id}
\\
& \hskip 2em \textstyle
 V_{\pi_0(s_i(T))}
\ot
 \bigotimes_{c \in
  \pi_0(s(S))\setminus  \{  \pi_0(s_i(T))\}} V_c\ar[d]^\cong
\\
 V_{\pi_0(S)} &  \textstyle
 \bigotimes_{c \in
  \pi_0(s(S))} V_c \ar[l]_\beta
}
\]
commute. The result of the above construction is the Markl version
of the 
{\em   endomorphism operad\/}.

Notice that the components $\eta_u : \bfk \to \End_V(u)$ of
transformation~(\ref{2x2}) for $\Markl=\End_V$ are  given by the maps
in~(\ref{Asi_jsem_dostal_premii!}) as
\[
\eta_u(1) := u : 
V_{\pi_0(s_1(u))} \to  V_{\pi_0(u)}
\in \End_V(u).
\]
It is simple to verify that the commutativity of~(\ref{Je_vedro.}) is
precisely~(\ref{Kdy_zacnu_jezdit_na_kole?}). The induced maps 
\[
\vartheta(T,u) : \End_V(F) \to \End_V(T)
\] 
in~(\ref{proc_ty_lidi_musej_porad_hlucet}) are given by the
composite, $\vartheta(T,u)(\phi) := u \kompozice \phi$, with the
map~(\ref{Asi_jsem_dostal_premii!}). 

\begin{remark}
The above analysis shows that the morphisms  $\vartheta(T,u)$ need not
be the identities for a general $\End_V$. Endomorphism operads are
therefore examples of unital operads that need not be 
{\em strictly\/} unital.
\end{remark}

We have the following expected result.

\begin{proposition}
\label{Udelal_jsem_si_ciruvky_zelanky.}
There is a one-to-one correspondence between $\Markl$-algebras with
underlying collection~(\ref{Jarka_hovori}) and
morphisms $\Markl \to \End_V$ of Markl operads.
\end{proposition}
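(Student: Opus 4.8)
The plan is to establish a mutually inverse pair of correspondences, exactly parallel to the classical statement that an algebra structure on $V$ is the same as an operad morphism to $\End_V$. Given an $M$-algebra with underlying collection~(\ref{Jarka_hovori}), I would first recover the data~(\ref{Asi_jsem_dostal_premii!}) required to form $\End_V$: for each local terminal $u$ the structure map $\alpha_u : M(u) \ot V_{\pi_0(s_1(u))} \to V_{\pi_0(u)}$ together with the unit $\eta_u : \bfk \to M(u)$ yields the map $u := \alpha_u \circ (\eta_u \ot \id)$, and the coherence~(\ref{Kdy_zacnu_jezdit_na_kole?}) follows from the equivariance axiom~(ii) of Definition~\ref{vcera_s_Mikesem_na_Jazz_Bluffers} applied to the map $u \to v$ of local terminal objects. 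Once $\End_V$ is defined, I would define the candidate operad morphism $\Theta : M \to \End_V$ by sending $x \in M(T)$ to the map $\Theta(x)(\Rada a1s) := x(\Rada a1s)$, i.e.\ the action of $x$ on $A_c := V_c$.

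Next I would verify that $\Theta$ is indeed a morphism of Markl's operads, which amounts to checking three compatibilities against Definition~\ref{markl}. Compatibility with the $\Iso$-presheaf structure, $\Theta(f^*(x)) = f^*(\Theta(x))$ for an isomorphism $f$, is precisely the translation of the equivariance axiom~(ii) of Definition~\ref{vcera_s_Mikesem_na_Jazz_Bluffers} into the defining formula~(\ref{Popozitri_letim_do_Bari.}) for the $\Iso$-action on $\End_V$. Compatibility with the circle products, $\Theta(\circ_\phi(x,y)) = \circ_\phi(\Theta(y) \ot \Theta(x))$ for an elementary $F \fib_i S \stackrel\phi\to T$, unwinds to the associativity axiom~(iii) of the algebra; here one must be careful to match the Koszul sign $\sign{|y|(|a_1|+\cdots+|a_{i-1}|)}$ in the associativity axiom against the commutativity constraint $\tau$ built into the definition of $\circ_\phi$ on $\End_V$. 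Compatibility with the extended units, $\Theta \circ \eta_u = \eta_u$, is the unitality axiom~(i) restated through the identification $\eta_u(1) = u$.

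For the reverse direction, I would start from an operad morphism $\Phi : M \to \End_V$ and define an $M$-algebra on the same collection $A_c := V_c$ by setting $x(\Rada a1s) := \Phi(x)(\Rada a1s)$ for $x \in M(T)$. Verifying the three algebra axioms is then the same computation read backwards: unitality follows because $\Phi$ respects extended units and $\eta_u(1) = u$ acts as the identity for chosen local terminal $u$; equivariance follows from $\Phi$ commuting with the $\Iso$-action via~(\ref{Popozitri_letim_do_Bari.}); and associativity follows from $\Phi$ commuting with $\circ_\phi$, where again the sign bookkeeping must be handled. Finally I would observe that the two constructions are inverse to each other essentially by definition, since both are the tautological ``evaluate $x$ on the arguments $a_i$'' pairing, and that both are natural with respect to morphisms of algebras resp.\ morphisms over $\End_V$, giving the claimed bijection.

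The routine parts are the unitality and equivariance checks, which are direct rewritings. The main obstacle will be the associativity compatibility: the definition of $\circ_\phi$ on $\End_V$ inserts the commutativity constraint to move the $F$-factor past the leading tensor factors $V_{c_1} \ot \cdots \ot V_{c_{i-1}}$, and one must confirm that the resulting permutation of graded vector spaces produces exactly the sign $\sign{\varepsilon}$ with $\varepsilon = |y|(|a_1|+\cdots+|a_{i-1}|)$ appearing in axiom~(iii) of Definition~\ref{vcera_s_Mikesem_na_Jazz_Bluffers}. Tracking this sign correctly, together with the identification of the index sets $\pi_0(s(S)) = \pi_0(s(F)) \sqcup (\pi_0(s(T)) \setminus \{\pi_0(s_i(T))\})$ used throughout, is where the real care is needed; everything else is formal.
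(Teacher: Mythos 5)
Your proposal is correct and coincides with the paper's proof, which is simply recorded as ``Direct verification'': the tautological adjunction between structure maps~(\ref{Ben_Ward_in_Prague}) and maps $M(T) \to \End_V(T)$, with unitality, equivariance and associativity of Definition~\ref{vcera_s_Mikesem_na_Jazz_Bluffers} translating respectively into compatibility with extended units, the $\Iso$-presheaf action~(\ref{Popozitri_letim_do_Bari.}), and the circle products (including the Koszul sign hidden in the symmetry constraint). Your additional observations --- recovering the maps~(\ref{Asi_jsem_dostal_premii!}) from the algebra via $1$-connectivity and deducing the coherence~(\ref{Kdy_zacnu_jezdit_na_kole?}) from equivariance together with~(\ref{Je_vedro.}) --- correctly fill in the details the paper leaves implicit.
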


\begin{proof}
Direct verification.
\end{proof}

\begin{theindex}
{\normalsize
\item Algebra over a Markl operad,
  Definition~\ref{Zapomel_jsem_si_pocitac_v_Koline_ja_hlupak}
  and Definition~\ref{vcera_s_Mikesem_na_Jazz_Bluffers}
\item Canonical grading, page~\pageref{canon},
  Section~\ref{Ceka_mne_Psenicka.}  
\item Cocycle, Example~\ref{coboundary}
\item Coboundary, Example~\ref{coboundary}
\item Constant-free operadic category, Definition~\ref{constant-free}
\item Contraction, Definition~\ref{pisu_jednou_rukou}
\item Discrete operadic fibration Definition~\ref{psano_v_Myluzach}
\item Discrete operadic opfibration Definition~\ref{zas_mne_boli_zapesti}
\item Derived sequence, Equation~\eqref{e1}
\item Elementary morphism, Definition~\ref{plysacci_postacci}
\item Endomorphism operad, page~\pageref{Jarka_hovori},
  Section~\ref{section-markl} 
\item Factorizability, \Fac,
  Definition~\ref{dnes_prednaska_na_Macquarie}
\item Grading of an operadic category,  Definition~\ref{grad}
\item Harmonic pair, Definition~\ref{har}
\item Invertibility of \qb{s}, \QBI,
  Lemma~\ref{I_laboratorni_vysetreni_mne_ceka.}
\item Local isomorphism, Definition~\ref{pisu_jednou_rukou}
\item Local reordering morphism, Definition~\ref{pisu_jednou_rukou}
\item Markl operad, Definition~\ref{markl}
\item Order preserving morphism, Definition~\ref{pisu_jednou_rukou}
\item Ordered graph,
Definition~\ref{Radeji_bych_sel_s_Jaruskou_na_vyhlidku_sam.}
\item $\Iso \subset \ttO$, subcategory of isomorphisms, page~\pageref{Co
  zitra zjisti?}, Section~\ref{section-markl}
\item $\LT \subset \ttO$, subcategory of local terminal objects,
  page~\pageref{Jsem znepokojen.}, Section~\ref{section-markl}
\item $\DO\subset \ttO$, subcategory of order-preserving morphisms,
  page~\pageref{9 dni}, 
Section~\ref{a0}
\item $\QO\subset \ttO$, subcategory of \qb{s},
  page~\pageref{9 dni}, Section~\ref{a0}
\item Pairs with disjoint fibers, Definition~\ref{d3}
\item Preordered graph, Definition~\ref{pre}
\item Pure contraction, Definition~\ref{pisu_jednou_rukou}
\item Rigidity, \Rig, 
      Definition~\ref{Porad_nevim_jestli_mam_jit_na_ty_narozeniny.}
\item Strict grading, \SGrad, Definition~\ref{sgrad}
\item Strictly unital Markl operad, Definition~\ref{svedeni}
\item Strong blow-up axiom, \SBU, page~\pageref{bu}, Section~\ref{Pojedu_vecer_nebo_ted?}
\item Strongly factorizable operadic category, \SFac, 
      Definition~\ref{zase_jsem_podlehl}
\item Unique fiber axiom (condition), \UFB, Definition~\ref{Kveta_asi_spi.}
\item Weak blow-up axiom, \WBU, page~\pageref{wbu},  Section~\ref{Pojedu_vecer_nebo_ted?}
\item $1$-connected Markl operad, Definition~\ref{svedeni}
}
\end{theindex}



%

\providecommand{\doi}[1]{}
\renewcommand{\doi}[1]{\href{https://doi.org/\detokenize{#1}}{DOI: \detokenize{#1}}}%
\newcommand{\arxiv}[1]{\href{http://arxiv.org/abs/#1}{arXiv:#1}}

\providecommand{\fulldoi}[1]{}
\renewcommand{\fulldoi}[1]{\href{\detokenize{#1}}{\detokenize{#1}}}%
\renewcommand{\fulldoi}[1]{\url{\detokenize{#1}}}%

\label{lastpage}

\end{document}